\documentclass[a4paper,12pt]{article}
\usepackage{epsfig,amssymb,amsmath,amsthm,fullpage,wrapfig,pstricks}
\newtheorem{theorem}{Theorem}
\newtheorem{corollary}{Corollary}
\newtheorem{lemma}{Lemma}
\newtheorem{proposition}{Proposition}
\newtheorem{remark}{Remark}
\begin{document}

\title{The Hausdorff dimension of a class of\\random self-similar fractal trees\\}
\author{D.A. Croydon\footnote{Dept of Statistics,
University of Warwick, Coventry CV4 7AL, UK;
{d.a.croydon@warwick.ac.uk}.}}
\date{15 May 2007}
\maketitle
\begin{abstract}
In this article a collection of random self-similar fractal dendrites is constructed, and their Hausdorff dimension is calculated. Previous results determining this quantity for random self-similar structures have relied on geometric properties of an underlying metric space or the scaling factors being bounded uniformly away from zero. However, using a percolative argument, and taking advantage of the tree-like structure of the sets considered here, it is shown that conditions such as these are not necessary. The scaling factors of the recursively defined structures in consideration form what is known as a multiplicative cascade, and results about the height of this random object are also obtained.
\end{abstract}

\section{Introduction}

There is now a substantial literature focusing on the geometrical and analytical properties of self-similar fractals, which are commonly described as the unique non-empty compact subset $K\subseteq X$ satisfying $K=\cup_{i\in S} \psi_i(K)$, where $(\psi_i)_{i\in S}$ is a finite collection of contractions on an underlying complete metric space $(X,d)$. The existence and uniqueness of $K$ is guaranteed by an extension of the usual contraction principle for complete metric spaces (\cite{Kigami}, Theorem 1.1.4, for example). A fundamental problem in this area is to calculate the Hausdorff dimension, $\mathrm{dim}_HK$ say, of the self-similar fractal $K$, and in a wide class of examples it is now known (\cite{Kigami}, Corollary 1.5.9) that $\mathrm{dim}_HK$ is the unique positive $\alpha$ solving $\sum_{i\in S}r_i^\alpha=1$, where $(r_i)_{i\in S}$ are the contraction ratios of $(\psi_i)_{i\in S}$. Additionally, various stochastic versions of this result have been investigated. For example, when the underlying metric space $(X,d)$ is finite dimensional Euclidean space, in \cite{Mauldin} a random self-similar set $K$ satisfying $K=\cup_{i\in S}w(i)K_i$, where $(w(i))_{i\in S}$ is a random (finite or countable) collection of scaling factors and $(K_i)_{i\in S}$ are independent copies (up to translation) of $K$, independent of $(w(i))_{i\in S}$, is constructed and (assuming $K$ is non-empty and $(w(i)K_i)_{i\in S}$ fulfils a non-overlapping condition) the Hausdorff dimension of $K$ is shown to be $P$-a.s. equal to the unique positive $\alpha$ solving
\begin{equation}\label{alpha}
E\sum_{i\in S}w(i)^\alpha=1,
\end{equation}
which obviously reduces to the deterministic equation when $w(i)=r_i$, $P$-a.s. Note that we are assuming that all the random variables are defined on an underlying probability space with probability measure $P$, and $E$ is the expectation under $P$. A similar result was proved independently in \cite{Falconer}.

The aim of this article is to obtain similar Hausdorff dimension results to those discussed above for a class of random self-similar trees that do not fit into the usual framework for random self-similar sets. First though, it is necessary to deal with questions concerning their construction. Our starting point, rather than to define a random set directly, is to consider a fixed fractal subset of $\mathbb{R}^2$ and build a random metric upon it. More precisely, for $(x,y)\in\mathbb{R}^2$, set
\[\psi_1(x,y):=\frac{1}{2}(1-x,y),\hspace{10pt}\psi_2(x,y):=\frac{1}{2}(1+x,-y),\hspace{10pt}\psi_3(x,y):=\left(\frac{1}{2}+cy,cx\right),\]
for some constant $c\in(0,1/2)$, and define $T$ to be the unique
non-empty compact set satisfying $T=\bigcup_{i\in S}\psi_i(T)$, where we now define $S:=\{1,2,3\}$. The set $T$, shown in Figure \ref{ssd}, is easily checked to be a {\it dendrite}, by which we mean that it is an arc-wise connected topological space containing no subset homeomorphic to the circle. Although the Euclidean metric is important for its construction, we are only interested in $T$ as a topological space. Indeed, the Hausdorff dimension of $T$ with respect to the intrinsic random metric we construct upon it can be strictly larger than 2.

\begin{figure}[t]
\centering
\scalebox{0.7}{
\includegraphics{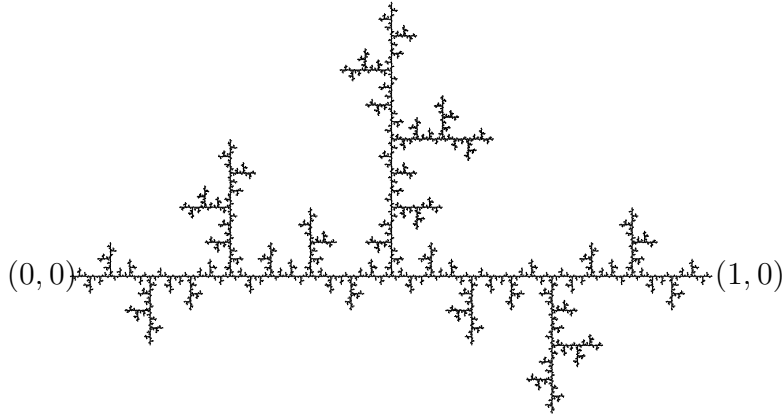}}
\put(-265,50){$(0,0)$}
\put(0,50){$(1,0)$}
\caption{Self-similar dendrite $T$.}
\label{ssd}
\end{figure}

Heuristically, we build a random metric on $T$ by first supposing that the distance between the ``edge'' from $(0,0)$ to $(1,0)$ is of length 1. We then replace this by three randomly scaled copies of the edge with new lengths given by $(w(i))_{i\in S}$, see Figure \ref{edgerep}, and continue inductively to replace edges independently of each other by triples of scaled edges, with the relevant scaling factors having the same distribution as $(w(i))_{i\in S}$. As the number of inductive steps increases our discrete approximations eventually fill out a dense subset of $T$, and (under certain distributional conditions on the scaling factors) calculating the ``limiting distance'' between points yields a metric $R$ on $T$ such that $(T,R)$ is a compact metric space, and the topology induced by $R$ on $T$ is the same as the original (Euclidean) one, $P$-a.s. See Section \ref{ssdsec} for full details.

\begin{figure}[t]
\begin{center}
\begin{pspicture}(10,2.75)
\psline(.5,.25)(3.5,0.25)
\rput(0,.25){$(0,0)$}
\rput(4,.25){$(1,0)$}
\rput(2,0){$1$}
\psline[arrows=->](4.5,.75)(5.5,.75)
\psline(6.5,.25)(9.5,.25)
\psline(8.0,.25)(8,1.75)
\rput(6,0.25){$(0,0)$}
\rput(10,0.25){$(1,0)$}
\rput(8,2){$(\frac{1}{2},\frac{1}{2})$}
\rput(7.25,0){$w(1)$}
\rput(8.75,0){$w(2)$}
\rput(8.5,1){$w(3)$}
\end{pspicture}
\end{center}
\caption{Edge replacement procedure.}
\label{edgerep}
\end{figure}
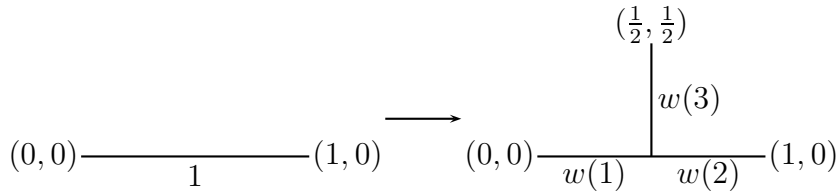

Importantly, we do not assume that $w(1)+w(2)\equiv1$, $P$-a.s., and, as a result of this, the distance in $(T,R)$ between $(0,0)$ and $(1,0)$ depends, in general, on all of the steps in the inductive procedure. For reasons related to the construction of the metric $R$ as a so-called resistance metric, we call this limiting distance between $(0,0)$ and $(1,0)$, $R_\emptyset$ say, a resistance perturbation. Similarly, the distance between $(0,0)$ and $(\frac{1}{2},0)$ in $(T,R)$ is not simply $w(1)$, but equal to $w(1)R_1$, where $R_1$ is the resistance perturbation associated with this ``edge'' of $T$ and has the same distribution as $R_\emptyset$. The fact that $T$ is a dendrite means that we can characterise these resistance perturbations in a convenient way that allows us to deduce their distributional properties and perform calculations with them.

In Section \ref{dimension}, we provide conditions on $(w(i))_{i\in S}$ that allow us to deduce the Hausdorff dimension of $(T,R)$. Proving that the $\alpha$ defined by (\ref{alpha}) is an upper bound for the Hausdorff dimension it straightforward, and requires no further conditions than those used for the construction of $(T,R)$. On the other hand, when obtaining a lower bound for the dimension of a fractal defined in a recursive fashion, it is often a problem when parts of the fractal become small too quickly, and this is the case here. For the proof in \cite{Mauldin} of the result described above, the underlying Euclidean geometry of the random sets being considered is critical, but since our fractal trees are not embedded into any fixed metric space there is no easy translation of this argument to our setting. Another common assumption for proving a Hausdorff dimension lower bound and related results is that the scaling factors are bounded uniformly away from zero \cite{Falconer}, \cite{HamJon}. We show in Theorem \ref{lowerboundbounded} one application of this condition in our setting. More interestingly, though, is that knowledge of the geometry of the trees in consideration allows us to avoid a uniform lower bound; in fact, we shall require only that clusters of small scaling factors are not too large. We construct a random graph approximation to $T$ and use a percolation argument upon this to show that this is the case when the scaling factors are independent and their distributions satisfy a simple polynomial tail bound at 0, see Theorem \ref{hausdorfflower2}.

Let us complete the discussion of our results related to random self-similar dendrites by outlining a pair of examples. Firstly, one choice of random scaling factors that does not fit into any previously studied set-up but satisfies conditions that allow us to calculate the Hausdorff dimension of the fractal $(T,R)$ is if we suppose $(w(i))_{i\in S}$ are independent identically-distributed $U(0,1)$ random variables. It is easy to check that the Hausdorff dimension is 2 in this case. Note in particular that $w(1)+w(2)\neq 1$, $P$-a.s.; consequently there are indeed non-trivial resistance perturbations in this case. Secondly, although we can not calculate the Hausdorff dimension using the techniques of this article, when $(w(i))_{i\in S}$ are the square-roots of a Dirichlet $(\frac{1}{2},\frac{1}{2},\frac{1}{2})$ triple, the construction of $(T,R)$ is of interest in its own right. The reason for this is explained in detail in \cite{hamcroy}, where it is proved that in this case $(T,R)$ is a version of the continuum random tree of Aldous, which is an important random dendrite with connections to many other stochastic tree-like objects \cite{Aldous2}.

The family of random scaling factors that we use to construct the random metric space $(T,R)$ form what is known as a multiplicative cascade, which is a probabilistic structure that has been studied extensively, see \cite{Falconer2}, \cite{Liu}, \cite{LiuRou}, \cite{Mandelbrot}. Much of this previous work has concentrated on investigating properties of an associated tree-martingale limit (see Section \ref{multcasc} for a definition), and we add to this body of knowledge by proving a tail bound at 0 for this random variable (Proposition \ref{lefttailprop}). We also define the height of a (generalised) multiplicative cascade, and derive a simple condition that yields the finiteness of this quantity and its moments, see Corollary \ref{heightupperbound} and Theorem \ref{heighttheorem}.

Finally, let us remark that, although only one particular self-similar fractal dendrite is considered in this article, at the expense of some additional notation relatively minor refinements of the proofs used here allow the results we obtain to be extended to any post-critically finite self-similar fractal dendrite (see \cite{Kigami} for a definition of a post-critically finite self-similar fractal). A treatment of the general case appears in \cite{thesis}.

\section{Multiplicative cascade results}\label{multcasc}

We first introduce an address space to label various objects in the discussion. Fix a finite index set $S$, let $N:=|S|$, and define the ``shift-space'' of infinite sequences ${\Sigma}:=S^\mathbb{N}$. The corresponding finite sequences are denoted by, for $n\geq 0$,
\[\Sigma_n:=S^n,\hspace{20pt}{\Sigma}_*:=\bigcup_{m=0}^{\infty}\Sigma_m,\]
where $S^0:=\{\emptyset\}$. For $i\in\Sigma_m$, $j\in\Sigma_n$, $k\in \Sigma$, we write $ij=$ $(i_1,\dots,i_m,j_1,\dots, j_n)$ and $ik=$ $(i_1,\dots,i_m,k_1,k_2\,\dots)$. For $i\in\Sigma_*$, we denote by $|i|$ the integer $n$ such that $i\in\Sigma_n$ and call this the length of $i$. For $i\in\Sigma_n\cup\Sigma$, $n\geq m$, the truncation of $i$ to length $m$ is written as $i|m=(i_1,\dots, i_m)$.

We define a {\it multiplicative cascade} to be a family of random variables $(w(i))_{i\in\Sigma_*\backslash\{\emptyset\}}$ which take values in $[0,1]$ such that, for $i\in \Sigma_*\backslash\{\emptyset\}$, the $N$-tuples $(w(ij))_{j\in S}$ are independent copies of $(w(j))_{j\in S}$. The multiplicative cascade has a naturally associated filtration $(\mathcal{F}_n)_{n \geq 0}$, defined by $\mathcal{F}_n:=\sigma(w(i):|i|\leq n)$. Throughout our arguments, we use the function $F(\theta):=E\left( \sum_{i\in S}w(i)^\theta \right)$, for $\theta >0$, which is decreasing, continuous and satisfies
\begin{equation}\label{Flimit}
F(\theta) \rightarrow \sum_{i\in S} P(w(i)=1)\mbox{\hspace{20pt} as }\theta \rightarrow\infty.
\end{equation}

Furthermore, we introduce the notation $l(i):=w(i|1)w(i|2)\dots w(i)$ for $i\in \Sigma_* \backslash \{\emptyset\}$, and $l(\emptyset)=1$; and use this to define the so-called {\it tree-martingale} (this term was coined in \cite{Falconer2}), $(M^\theta(n))_{n\geq 0}$, by $M^\theta(n):= \sum_{i\in \Sigma_n} l(i)^\theta F(\theta)^{-n}$. It is straightforward to check that, for each $\theta > 0$, $(M^\theta(n))_{n \geq 0 }$ is an $(\mathcal{F}_n)_{n \geq 0}$ martingale. In particular, $E\left(M^\theta(n)\right)=E\left(M^\theta(0)\right)=1.$ By the almost sure martingale convergence theorem, this implies $M^\theta(n)\rightarrow M^\theta\mbox{ as }n\rightarrow\infty,$ $P\mbox{-a.s.},$ for some random variable, $M^\theta$, with $EM^\theta\in [0,1]$. By relabeling, we have the same distributional properties for the random variables, $(M^\theta_i)_{i\in\Sigma_*}$, defined by
\begin{equation}\label{mitheta}
M^\theta_i = \lim_{n\rightarrow \infty}\frac{\sum_{j\in\Sigma_n}l(i,j)^\theta}{l(i)^\theta F(\theta)^n}.
\end{equation}
It is not difficult to check that for each $n$, $(M_i^\theta)_{i\in\Sigma_n}$ is a collection of independent, identically distributed random variables, independent of $\mathcal{F}_n$. We also have that the following identity holds:
\begin{equation}\label{mthetaident}
M^\theta= \sum_{i\in\Sigma_n} \frac{l(i)^\theta}{F(\theta)^n}M^\theta_i.
\end{equation}

To generalise our cascade model, we introduce random perturbations of the $l(i)$, denoted by $(X_i)_{i\in\Sigma_*}$. We shall assume that the $X_i$ are identically distributed non-negative random variables satisfying: $E(X_i^\theta)<\infty$ for every $\theta>0$; and also $X_i\bot \mathcal{F}_{|i|}$ for all $i\in\Sigma_{*}$,
where $\bot$ is taken to mean ``is independent of''. The reason for the introduction of the factors $(X_i)_{i\in \Sigma_*}$ will become apparent later, since perturbations with these properties arise naturally in the construction of our self-similar dendrite.

We can consider the cascade model as a weighted graph tree, rooted at $\emptyset$ with vertex set $\Sigma_*$ and edge set $\{\{i, i|(|i|-1)\}:i\in \Sigma_*\backslash\{\emptyset\}\}$; where the edge $\{i, i|(|i|-1)\}$ has weight $l(i)X_i$. For two vertices in $\Sigma_*$, we define the distance between them to be the sum of edge distances along the shortest path in the graph. We then define the {\it height} of the tree to be
\[H=\sup_{i\in\Sigma} \sum_{n=0}^\infty l(i|n)X_{i|n}.\]
The usual definition of tree height (the supremum of distances of vertices from the root) actually has the sum index starting from 1, but we shall find this slightly adjusted definition more useful in later sections.

Our first main result about multiplicative cascades is Theorem \ref{mainresult}, a simple corollary of which gives a sufficient condition for the expected height $EH$ to be finite. In Theorem \ref{heighttheorem}, we deal with the unperturbed case and show that the condition is necessary in this case. We start by estimating how fast the edge lengths $l(i)X_i$ decay as $|i|\rightarrow \infty$.

\begin{lemma} \label{heightlemma} Suppose $\sum_{i\in S} P(w(i)=1)<1$ and fix $d\geq 0$, then\\
(a) there exist constants $c<\infty$, $\alpha_1 \in (0,1)$ such that
\[E\left(\left(\sup_{i\in\Sigma_n}l(i)X_i\right)^d\right)\leq c\alpha_1^n\mbox{,\hspace{20pt}}\forall n\in \mathbb{N}. \]
\\
(b) there exists a positive, finite random variable $A$ and (deterministic) $\alpha_2 \in (0,1)$ such that
\[\sup_{i\in\Sigma_n}l(i)X_i \leq A\alpha_2^n\mbox{,\hspace{20pt}}\forall n\in \mathbb{N}\mbox{,\hspace{20pt}}P\mbox{-a.s.}\]
\end{lemma}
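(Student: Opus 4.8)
My plan is to establish (a) first, via a moment estimate, and then to deduce (b) from it by a standard Borel--Cantelli argument. Throughout, fix $d>0$ (the substantive case) and write $Y_n:=\sup_{i\in\Sigma_n}l(i)X_i$.

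For (a), the naive bound $\sup_i a_i\le\sum_i a_i$ applied to $a_i=(l(i)X_i)^d$ would, after taking expectations, lead to a bound of order $F(d)^n$, which is useless when $F(d)\ge1$, as may happen for small $d$. The idea to circumvent this is to decouple the exponent $d$ from the exponent needed for summability. For any $\theta\ge d$, the elementary inequality $\sup_i a_i\le(\sum_i a_i^\theta)^{1/\theta}$ for nonnegative reals gives $Y_n^d\le(\sum_{i\in\Sigma_n}(l(i)X_i)^\theta)^{d/\theta}$, and since $d/\theta\le1$ the map $x\mapsto x^{d/\theta}$ is concave, so Jensen's inequality yields $E(Y_n^d)\le(E\sum_{i\in\Sigma_n}(l(i)X_i)^\theta)^{d/\theta}$. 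For each $i\in\Sigma_n$ the factor $l(i)$ is $\mathcal{F}_n$-measurable whereas $X_i\bot\mathcal{F}_n$, so $E((l(i)X_i)^\theta)=E(l(i)^\theta)E(X_i^\theta)$; summing over $\Sigma_n$ and using $EM^\theta(n)=1$, that is $E\sum_{i\in\Sigma_n}l(i)^\theta=F(\theta)^n$, gives $E\sum_{i\in\Sigma_n}(l(i)X_i)^\theta=E(X^\theta)F(\theta)^n$ with $E(X^\theta)<\infty$ by hypothesis, whence $E(Y_n^d)\le E(X^\theta)^{d/\theta}(F(\theta)^{d/\theta})^n$. It remains to choose $\theta$: by (\ref{Flimit}) and the hypothesis $\sum_{i\in S}P(w(i)=1)<1$, together with the continuity and monotonicity of $F$, I can fix $\theta\ge\max(d,1)$ large enough that $F(\theta)<1$, and then $\alpha_1:=F(\theta)^{d/\theta}\in(0,1)$ and $c:=E(X^\theta)^{d/\theta}<\infty$ prove (a).

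For (b), let $c$ and $\alpha_1\in(0,1)$ be as furnished by (a) for some fixed $d>0$, and choose a deterministic $\alpha_2$ with $\alpha_1^{1/d}<\alpha_2<1$, so that $\alpha_1\alpha_2^{-d}<1$. Markov's inequality and (a) give $P(Y_n>\alpha_2^n)=P(Y_n^d>\alpha_2^{nd})\le E(Y_n^d)\alpha_2^{-nd}\le c(\alpha_1\alpha_2^{-d})^n$, which is summable in $n$. By Borel--Cantelli, $P$-a.s.\ there is a random $N_0<\infty$ with $Y_n\le\alpha_2^n$ for all $n\ge N_0$; since each $Y_n$ is finite (a finite supremum, with $l(i)\le1$ and $X_i<\infty$ a.s.), putting $A:=\max\{1,\max_{0\le n<N_0}Y_n\alpha_2^{-n}\}$ defines an a.s.\ positive and finite random variable with $Y_n\le A\alpha_2^n$ for every $n$, as required.

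I expect the only real obstacle to be the first step of (a): the absence of any uniform lower bound on the $w(i)$ means the crude supremum-by-sum estimate cannot be made to decay directly, and one must instead pass to a power $\theta$ large enough that $F(\theta)<1$. The hypothesis $\sum_{i\in S}P(w(i)=1)<1$ is precisely what makes such a $\theta$ available through the limit (\ref{Flimit}). Once (a) is proved with geometric decay, part (b) is a routine upgrade from convergence in mean to an almost-sure geometric bound.
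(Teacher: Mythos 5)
Your proof is correct, and it shares the paper's one essential idea: using the hypothesis together with (\ref{Flimit}) to fix $\theta\geq d$ with $F(\theta)<1$, then exploiting $E\sum_{i\in\Sigma_n}l(i)^\theta=F(\theta)^n$ and the independence $X_i\bot\mathcal{F}_n$. Where you genuinely differ is in the device converting the $\theta$-th moment of the sum into the $d$-th moment of the supremum. The paper applies Markov's inequality to get the tail estimate $P(\sup_{i\in\Sigma_n}l(i)X_i\geq\lambda)\leq\lambda^{-\theta}E(X_\emptyset^\theta)F(\theta)^n$ and then integrates this tail, splitting the integral at a cut-off $\lambda_n=\|X_\emptyset\|_{\theta}F(\theta)^{n/\theta}$; you instead use the deterministic inequality $\sup_i a_i\leq\left(\sum_i a_i^\theta\right)^{1/\theta}$ followed by Jensen's inequality for the concave map $x\mapsto x^{d/\theta}$, reaching the same bound $E(X^\theta)^{d/\theta}F(\theta)^{nd/\theta}$ with no integration or case-splitting — arguably the cleaner route. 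For part (b) the two arguments are essentially the same Borel--Cantelli argument: the paper re-uses its tail bound directly along the geometric sequence $\lambda_0^n$, while you re-derive a tail bound from part (a) via Markov; the admissible rates are identical (any $\alpha_2$ exceeding $F(\theta)^{1/\theta}$ works in both). A point in your favour is that you make explicit the construction of the random constant $A$ from the finitely many exceptional indices left by Borel--Cantelli, a step the paper leaves implicit. Finally, your restriction to $d>0$ is sensible: for $d=0$ the left-hand side is identically $1$, so no bound $c\alpha_1^n$ with $\alpha_1\in(0,1)$ can hold — a degeneracy in the statement itself which the paper's proof (yielding $\alpha_1=F(\theta_0)^{d/\theta_0}=1$ when $d=0$) shares rather than resolves.
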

\begin{proof} To prove (a) we first look for bounds on the tail of the distribution of $\sup_{i\in\Sigma_n}l(i)$. Applying Markov's inequality, the definition of $M^\theta(n)$ and the independence assumption of the $X_i$s we obtain, for $\theta >0$,
\begin{equation}\label{Psupliub}
P\left(\sup_{i\in\Sigma_n}l(i)X_i\geq \lambda\right)\leq P\left(\sum_{i\in\Sigma_n}l(i)^\theta X_i^\theta \geq \lambda^\theta \right)  \leq    \lambda^{-\theta}E\left(X_\emptyset^\theta\right) F(\theta)^n
\end{equation}
The condition $\sum_{i\in S} P(w(i)=1)<1$ and (\ref{Flimit}) imply that we can find $\theta_0>d$ large enough so that $F(\theta_0)<1$. Let $x:=\|X_\emptyset\|_{\theta_0}$ ($<\infty$ by assumption) and define $\lambda_n:=xF(\theta_0)^{n / \theta_0}$, which is less than 1 for $n\geq n_0$ for some $n_0\in\mathbb{N}$. Assume for now that $n \geq n_0$. For $\lambda\geq\lambda_n$, the upper bound at (\ref{Psupliub}) is $\leq 1$ and so is non-trivial and, for $\lambda<\lambda_n$, we merely use the fact that we are trying to bound a probability to deduce
\begin{eqnarray*}
E\left( \left(\sup_{i\in\Sigma_n}l(i)X_i\right)^d \right) & = & \int_0^\infty d\lambda^{d-1} P\left(\sup_{i\in\Sigma_n}l(i)X_i\geq \lambda\right)d \lambda \\
&\leq&\int_0^{\lambda_n}d\lambda^{d-1}d\lambda+ \int_{\lambda_n}^\infty d\lambda^{d-1-\theta_0}x^{\theta_0}F(\theta_0)^nd\lambda \\
&=&\frac{x^d\theta_0}{\theta_0-d}F(\theta_0)^{\frac{dn}{\theta_0}}
\end{eqnarray*}
Hence taking $\alpha_1=F(\theta_0)^{\frac{d}{\theta_0}}$ and $c$ suitably large gives us part (a) of the lemma.

To prove (b) we again look to bound the tail probability of $\sup_{i\in\Sigma_n}l(i)X_i$. We proceed as above to obtain the bound
$P(\sup_{i\in\Sigma_n}l(i)X_i\geq \lambda^n)\leq\lambda^{-n\theta}E\left(X_\emptyset^\theta\right) F(\theta)^n$. If we fix $\theta=\theta_0$ we can find a $\lambda_0\in(0,1)$ such that $\lambda_0^{-\theta_0}F(\theta_0)<1$ and so
\[\sum_{n=0}^{\infty} P\left(\sup_{i\in\Sigma_n}l(i)X_i\geq \lambda_0^n\right)\leq E\left(X_\emptyset^\theta\right) \sum_{n=0}^\infty \left(\lambda_0^{-\theta_0}F(\theta_0)\right)^n <\infty.\]
An application of the Borel-Cantelli lemma then gives us part (b) of the lemma.
\end{proof}

In proving Theorem \ref{mainresult} we will apply the following elementary lemma, which we state without proof.

\begin{lemma} \label{holder}
Let $(x_n)_{n\geq0}$ be a sequence of non-negative real numbers, then
\[\left(\sum_{n=0}^\infty x_n\right)^d\leq c\sum_{n=0}^\infty x_n^d (1+n)^d,\hspace{20pt}\forall d\geq 1,\]
where $c$ is a constant depending only on $d$.
\end{lemma}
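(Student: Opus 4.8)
The plan is to prove this by a single application of Hölder's inequality, after artificially inserting the weight $(1+n)$ and its reciprocal into the sum. First I would dispose of the case $d=1$ separately: since $1+n\geq 1$ for all $n\geq 0$, we have $\sum_n x_n\leq\sum_n x_n(1+n)$, so the inequality holds with $c=1$. From now on assume $d>1$ and set $q:=d/(d-1)$, the Hölder conjugate of $d$, so that $1/d+1/q=1$.

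The key step is to write each term as $x_n=\bigl(x_n(1+n)\bigr)\cdot(1+n)^{-1}$ and apply Hölder with exponents $d$ and $q$ to the two factors. This yields
\[\sum_{n=0}^\infty x_n\;\leq\;\left(\sum_{n=0}^\infty x_n^d(1+n)^d\right)^{1/d}\left(\sum_{n=0}^\infty (1+n)^{-q}\right)^{1/q}.\]
Because $d>1$ forces $q>1$, the second factor is a convergent $p$-series; denote its value by $B^{q}$, where $B:=\bigl(\sum_{n\geq 0}(1+n)^{-q}\bigr)^{1/q}<\infty$ is a quantity depending only on $d$. Raising the displayed inequality to the power $d$ then gives exactly
\[\left(\sum_{n=0}^\infty x_n\right)^d\;\leq\;B^d\sum_{n=0}^\infty x_n^d(1+n)^d,\]
so the claim holds with $c:=B^d=\bigl(\sum_{n\geq 0}(1+n)^{-d/(d-1)}\bigr)^{d-1}$.

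There is no substantive obstacle here; the only points requiring care are the choice of weight and the verification that the resulting constant is finite and depends on $d$ alone. The weight $(1+n)$ is selected precisely so that the auxiliary series $\sum_n(1+n)^{-q}$ converges for every $d>1$ (indeed $q=d/(d-1)>1$ always holds), and this is what makes $c$ finite. Finally, the inequality is trivially valid when the right-hand side is infinite, so no integrability assumption on $(x_n)$ is needed, and the same argument applies verbatim to finite sums.
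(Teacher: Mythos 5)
Your proof is correct and complete: the Hölder application with exponents $d$ and $q=d/(d-1)$ to the factorisation $x_n=\bigl(x_n(1+n)\bigr)(1+n)^{-1}$ gives the claim with $c=\bigl(\sum_{n\geq0}(1+n)^{-d/(d-1)}\bigr)^{d-1}$, and the $d=1$ case is handled trivially. The paper actually states this lemma without proof, describing it as elementary, and your argument is evidently the intended one (the lemma is even labelled after H\"older's inequality), so there is nothing to compare beyond noting that you have supplied the omitted details correctly.
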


\begin{theorem} \label{mainresult} Let $\sum_{i\in S}P(w(i)=1)<1$ and $d\geq0$, then
\[E\left(\left(\sum_{n=0}^\infty \sup_{i\in\Sigma_n} l(i)X_i \right)^d\right)<\infty.\]
\end{theorem}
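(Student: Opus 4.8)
The plan is to control the $d$-th moment of the sum by a convergent series whose terms are the individual moments $E((\sup_{i\in\Sigma_n} l(i)X_i)^d)$, each of which has already been estimated in Lemma \ref{heightlemma}(a). The only structural wrinkle is that the two tools I have at hand behave differently according to whether $d\geq 1$ or $d<1$, so I would split the argument into these two ranges, treating the ``hard'' range $d\geq 1$ via Lemma \ref{holder} and the range $0\leq d<1$ via elementary subadditivity.

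For $d\geq 1$, I would apply Lemma \ref{holder} with $x_n:=\sup_{i\in\Sigma_n} l(i)X_i$ to obtain the pathwise bound
\[
\left(\sum_{n=0}^\infty \sup_{i\in\Sigma_n} l(i)X_i\right)^d \leq c\sum_{n=0}^\infty \left(\sup_{i\in\Sigma_n} l(i)X_i\right)^d (1+n)^d.
\]
I would then take expectations, interchanging expectation and the infinite sum (justified by Tonelli's theorem, since every term is non-negative), and insert the estimate $E((\sup_{i\in\Sigma_n} l(i)X_i)^d)\leq c\alpha_1^n$ from Lemma \ref{heightlemma}(a). This reduces the claim to the finiteness of $\sum_{n=0}^\infty \alpha_1^n(1+n)^d$.

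For $0\leq d<1$, Lemma \ref{holder} is unavailable, but here I would instead use the elementary subadditivity inequality $\left(\sum_n x_n\right)^d\leq \sum_n x_n^d$, which holds precisely on this range of $d$. Taking expectations through the sum (again by Tonelli) and applying Lemma \ref{heightlemma}(a) reduces the claim in this case to the finiteness of the simpler series $\sum_{n=0}^\infty \alpha_1^n$.

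In both cases the conclusion follows from the elementary fact that a geometric series dominates any fixed polynomial weight: since $\alpha_1\in(0,1)$, the series $\sum_{n=0}^\infty \alpha_1^n(1+n)^d$ converges for every $d\geq 0$. There is in truth no serious obstacle here once Lemmas \ref{heightlemma} and \ref{holder} are in place; the only point requiring attention is the case split at $d=1$ and the observation that the polynomial factor $(1+n)^d$ produced by Lemma \ref{holder} is harmless against the exponential decay $\alpha_1^n$.
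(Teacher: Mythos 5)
Your proposal is correct, and in the main case $d\geq 1$ it is exactly the paper's argument: apply Lemma \ref{holder} pathwise with $x_n=\sup_{i\in\Sigma_n}l(i)X_i$, exchange expectation and sum (Tonelli), insert the bound of Lemma \ref{heightlemma}(a), and observe that $\sum_{n}(1+n)^d\alpha_1^n<\infty$ since $\alpha_1\in(0,1)$. The only place you diverge from the paper is the easy range $d\in[0,1)$: you use the subadditivity inequality $\left(\sum_n x_n\right)^d\leq \sum_n x_n^d$ together with Lemma \ref{heightlemma}(a) at the fractional exponent $d$, whereas the paper instead applies the elementary inequality $x^d\leq 1+x$ and thereby reduces the whole range $[0,1)$ to the already-proved case $d=1$. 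Both routes are valid for $d\in(0,1)$; yours is self-contained in the sense that it does not re-use the $d=1$ conclusion, but it does require invoking the lemma at exponent $d$ itself. The one point worth noting is the endpoint $d=0$: there your bound degenerates, since $\sum_n E\left(x_n^0\right)$ diverges (and the estimate of Lemma \ref{heightlemma}(a) cannot hold with $\alpha_1<1$ when the left-hand side is identically $1$), so strictly speaking you should dispose of $d=0$ separately by the trivial remark that a zeroth moment equals $1$; the paper's reduction via $x^d\leq 1+x$ handles that endpoint without any special pleading.
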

\begin{proof} For $d\geq1$, applying Lemmas \ref{heightlemma} and \ref{holder} yields
\[E\left(\left(\sum_{n=0}^\infty \sup_{i\in\Sigma_n} l(i)X_i \right)^d\right)\leq c_1 E\left(\sum_{n=0}^\infty \left(\sup_{i\in\Sigma_n} l(i)X_i \right)^d(1+n)^d\right)
\leq c_2\sum_{n=0}^\infty (1+n)^d\alpha_1^n,\]
which is clearly finite. For $d\in[0,1)$, the result follows from the case $d=1$ by applying the inequality $x^d\leq 1+x$, $\forall x\geq 0$.
\end{proof}

\begin{corollary} \label{heightupperbound} Let $\sum_{i\in S}P(w(i)=1)<1$ and $d\geq0$, then $E(H^d)<\infty$.
\end{corollary}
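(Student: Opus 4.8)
The plan is to deduce the corollary almost immediately from Theorem \ref{mainresult} by bounding $H$ above by the random sum appearing there. The only genuine point is a routine interchange of supremum and summation, which produces an upper bound that no longer depends on the individual ray $i\in\Sigma$.

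Concretely, I would first fix any $i\in\Sigma$. Since the truncation $i|n$ belongs to $\Sigma_n$ for every $n\geq 0$, each summand obeys $l(i|n)X_{i|n}\leq \sup_{j\in\Sigma_n} l(j)X_j$, and hence
\[
\sum_{n=0}^\infty l(i|n)X_{i|n}\;\leq\;\sum_{n=0}^\infty \sup_{j\in\Sigma_n} l(j)X_j.
\]
The right-hand side is independent of $i$, so taking the supremum over $i\in\Sigma$ gives $H\leq \sum_{n=0}^\infty \sup_{j\in\Sigma_n} l(j)X_j$. Note that it is precisely the convention of starting the sum in the definition of $H$ at $n=0$ (rather than $n=1$) that makes this bound match the quantity controlled in Theorem \ref{mainresult}, which is why the adjusted definition of height was introduced.

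It then remains to raise both sides to the power $d$ and take expectations. For $d\geq 0$ the map $x\mapsto x^d$ is non-decreasing on $[0,\infty)$, so
\[
E\left(H^d\right)\;\leq\; E\left(\left(\sum_{n=0}^\infty \sup_{j\in\Sigma_n} l(j)X_j\right)^d\right),
\]
and the right-hand side is finite by Theorem \ref{mainresult}, whose hypotheses ($\sum_{i\in S}P(w(i)=1)<1$ and $d\geq 0$) coincide exactly with those assumed here. This completes the argument, and I do not anticipate any substantive obstacle: all the analytic work (the geometric decay from Lemma \ref{heightlemma} and the H\"older-type estimate of Lemma \ref{holder}) has already been absorbed into Theorem \ref{mainresult}, leaving only the elementary domination $H\leq \sum_{n=0}^\infty \sup_{j\in\Sigma_n} l(j)X_j$ to verify.
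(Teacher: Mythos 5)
Your proposal is correct and is essentially identical to the paper's proof: the paper also notes the domination $\sup_{i\in\Sigma}\sum_n l(i|n)X_{i|n}\leq\sum_n \sup_{i\in\Sigma_n} l(i)X_i$ and then invokes Theorem \ref{mainresult}. You have merely spelled out the elementary exchange of supremum and sum, and the monotonicity step, in more detail.
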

\begin{proof} After noting that $\sup_{i\in\Sigma}\sum_n l(i|n)X_{i|n}$ $\leq\sum_n \sup_{i\in\Sigma_n} l(i)X_i$, this result follows from Theorem \ref{mainresult}.
\end{proof}

We now show how for unperturbed cascades the condition $\sum_{i\in S} P(w(i)=1)<1$ is also necessary for finite moments of $H$.

\begin{theorem} \label{heighttheorem} Assume $X_i\equiv1$, $\forall i \in \Sigma_*$, then
\\
(a) if $\sum_{i\in S}P(w(i)=1)<1$, $EH^d<\infty$, $\forall d\in\mathbb{R}$.
\\
(b) if $\sum_{i\in S}P(w(i)=1)=1$, $EH=\infty$.
\\
(c) if $\sum_{i\in S}P(w(i)=1)>1$, $P(H=\infty)>0$.
\end{theorem}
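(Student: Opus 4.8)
The plan is to exploit the following observation. Call an edge $\{i, i|(|i|-1)\}$ of the cascade tree \emph{good} if its weight satisfies $w(i)=1$, and let $\mathcal{G}$ be the subtree of $\Sigma_*$ consisting of those vertices all of whose ancestral edges are good. Since the triples $(w(ij))_{j\in S}$ are independent across $i$, the numbers of good children $G_i := \#\{j \in S : w(ij)=1\}$ are independent and identically distributed, so $\mathcal{G}$ is exactly the family tree of a Galton--Watson process with offspring law that of $G:=\#\{i\in S: w(i)=1\}$ and mean $E[G] = \sum_{i\in S} P(w(i)=1)$. The crucial point is that along any path $i\in\Sigma$ lying in $\mathcal{G}$ we have $l(i|n)=1$ for every $n$ (recall $X_i\equiv 1$ here), so if $\mathcal{G}$ contains a vertex at generation $k$ then $H \geq \sum_{n=0}^{k} l(i|n) = k+1$.

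Part (a) is then immediate. For $d\geq0$ it is already contained in Corollary \ref{heightupperbound}, since the constant perturbation $X_i\equiv1$ trivially satisfies the standing assumptions. For $d<0$, the $n=0$ summand in the definition of $H$ equals $l(\emptyset)=1$, so $H\geq 1$, whence $H^d\leq 1$ and $EH^d\leq 1<\infty$.

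For part (c) the mean offspring is $E[G]=\sum_{i\in S}P(w(i)=1)>1$, so $\mathcal{G}$ is supercritical and survives with strictly positive probability. Since the degrees in $\mathcal{G}$ are bounded by $N$, K\"onig's lemma guarantees that on the survival event $\mathcal{G}$ contains an infinite path $i\in\Sigma$; along it every partial sum $\sum_{n=0}^k l(i|n)$ equals $k+1$, so $H=\infty$. Hence $P(H=\infty)$ is at least the survival probability, which is positive.

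Part (b) is where the real work lies. Now $E[G]=1$, so $\mathcal{G}$ is critical, and I would bound $H$ below by its extinction generation $L:=\sup\{n: Z_n>0\}$, where $Z_n$ denotes the size of generation $n$; the observation above gives $H\geq L$. It then suffices to show $E[L]=\infty$, and since $E[L]=\sum_{n\geq1}P(L\geq n)=\sum_{n\geq1}P(Z_n>0)$, I must show the survival probabilities are not summable. Here I would invoke the classical estimate of Kolmogorov for critical branching processes, which gives $nP(Z_n>0)\to 2/\sigma^2$ as $n\to\infty$, where $\sigma^2=\mathrm{Var}(G)\in(0,\infty)$ is finite because $G\leq N$; this forces $P(Z_n>0)\sim 2/(\sigma^2 n)$ and hence $\sum_n P(Z_n>0)=\infty$, so $EH\geq E[L]=\infty$. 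The one degenerate case to dispose of separately is $\sigma^2=0$: then $G\equiv1$, $\mathcal{G}$ is a single infinite ray, and $H=\infty$ deterministically, so $EH=\infty$ trivially. The main obstacle is thus precisely the critical case of part (b): rather than a soft argument one genuinely needs the sharp order $P(Z_n>0)\asymp 1/n$ of the survival probability to conclude that the expected height diverges, together with care in separating off the deterministic offspring case.
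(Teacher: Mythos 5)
Your proof is correct and follows essentially the same route as the paper: your tree $\mathcal{G}$ of all-good ancestral lines is exactly the paper's Galton--Watson process $Z_n=\sum_{i\in\Sigma_n}\tilde{l}(i)$ built from $\tilde{w}(i)=\mathbf{1}_{\{w(i)=1\}}$, the inclusion $\{Z_n>0\}\subseteq\{H\geq n+1\}$ is the same key observation, and parts (a) and (c) are handled identically. For (b), your invocation of Kolmogorov's asymptotic $nP(Z_n>0)\to 2/\sigma^2$ (together with the separate treatment of the degenerate case $\sigma^2=0$) is simply an unpacking of the standard fact the paper cites, namely that the extinction time of a critical Galton--Watson process with finite offspring variance has infinite expectation.
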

\begin{proof} Assume $\sum_{i\in S} P(w(i)=1)<1$. Clearly, if $X_i\equiv1$, $\forall i \in \Sigma_*$, then $(X_i)_{i\in\Sigma_*}$ satisfy the conditions that enable us to  apply the previous results of the section. Hence, if $d\geq0$, $E(H^d)<\infty$ follows from Corollary \ref{heightupperbound}. We note that, because $l(\emptyset)=1$, $H\geq1$. Hence, for $d<0$, $E(H^d)\leq1<\infty$. Thus part (a) holds.

We now construct a Galton-Watson branching process related to our tree. Given $(w(i))_{i\in \Sigma_*\backslash{\{\emptyset\}}}$, we define $\tilde{w}(i) := 1_{\left\{w(i)=1\right\}}$ and $\tilde{l}(i) := \prod_{n=1}^{|i|}\tilde{w}(i|n)$. It is then easy to check that if $Z_n:= \sum_{i\in\Sigma_n} \tilde{l}(i)$, then $(Z_n)_{n\geq0}$ is a Galton-Watson process. Importantly, if $Z_n>0$ then we must have $\tilde{l}(i)=1$ for some $i\in\Sigma_n$, and so $\tilde{l}(i|m)=1$ for $1\leq m\leq n$. Consequently $H\geq n+1$. This means that $\left\{Z_n>0\right\}\subseteq\left\{H\geq n+1\right\}$ and so we can use known results about the extinction of the Galton-Watson process to infer results about $H$. In particular, it may be calculated that $EZ_1=\sum_{i\in S} P(w(i)=1)$. To prove part (c) we note that if $\sum_{i\in S} P(w(i)=1)>1$, then $EZ_1>1$ and so the branching process is supercritical and survives with positive probability (for a proof of this see, for example, \cite{A-N}). Thus we have $Z_n>0$, $\forall n \geq 0$, with strictly positive probability and this implies that $P(H=\infty)=P(H \geq n$ , $\forall n\geq0)>0$.

Assume now $EZ_1=\sum_{i\in S} P(w(i)=1)=1$. It is a standard result that the extinction time of a critical Galton-Watson process with finite offspring variance has an infinite expectation. If $X$ is the extinction time of our Galton-Watson process, then we have $\left\{X>n\right\}=\left\{Z_n>0\right\}\subseteq\left\{H\geq n+1\right\}$ and so $H>X$. Since $EX=\infty$, it follows that (b) holds.
\end{proof}

\begin{remark}
At criticality, the Galton-Watson process exhibits two types of behaviour. First, there is the trivial case when $Z_1=1$, $P$-a.s. This implies that $Z_n=1$, $\forall n$, and so the process survives. This is mirrored in the multiplicative cascade when we have $\sum_{i\in S} P(w(i)=1)=1$ and $P(\sup_{i\in S} w(i)=1)=1$. In this case $H=\infty$, $P$-a.s. In the non-trivial case, $P(Z_1=1)<1$, the Galton-Watson process becomes extinct with probability 1. It follows that we can also find a multiplicative cascade with $\sum_{i\in S} P(w(i)=1)=1$ and $H<\infty$, $P$-a.s. The problem of whether we have $H<\infty$ in the general non-trivial case is left open.
\end{remark}
\begin{remark}
If $N=\infty$, there exist random variables with $\sum_{i\in S}P(w(i)=1)<1$ and also $\sup_{i\in S} w(i) =1$, $P$-a.s. In this case we have $H=\infty$, $P$-a.s., and so the theorem does not hold in general when $N=\infty$.
\end{remark}
\begin{remark}
The condition $w(i)\in [0,1]$, $P$-a.s., is not essential. For example, if we let $(w(i))_{i\in S}$ be independent, identically distributed $U[0,x]$ random variables and $N\geq 2$, then by mimicking the proofs of Lemma \ref{heightlemma} and Theorem \ref{heighttheorem} it can be shown that $EH^d<\infty$ for some values of $x$ strictly greater than 1 (the largest $x$ for which this is true is $1+O(N^{-1})$).
\end{remark}

The right-tail of the distribution of the tree-martingale limit $M^\theta$ has been considered by various authors, including Liu, who has proved a widely applicable result demonstrating exponential tails \cite{Liu}. However, we will later require that $M^\theta$ also has finite negative moments of some order for certain values of $\theta$, and so we will also need information about its tail at 0. The aim of the remainder of this section is to provide some simple conditions on the distribution of $(w(i))_{i\in S}$ that allow us to deduce suitable estimates on the distribution of $M^\theta$.

Let us start by assuming that $N>1$, $\sum_{i\in S}P(w(i)=1)<1$, and $P(w(i)=0)=0$ for every i. It follows that $F(\theta)$ is a strictly decreasing continuous function with $F(0)=N>1$ and $F(\theta)\rightarrow\sum_{i\in S} P(w(i)=1)<1$ as $\theta \rightarrow\infty$. Hence there exists a unique, strictly positive solution to the equation $F(\theta)=1$. We shall denote this solution $\alpha$. In the next lemma we present a few simple properties of $M^\theta$ when  $\theta\leq\alpha$ that we will apply later.

\begin{lemma} \label{mthetal2} Let $N>1$, $\sum_{i\in S}P(w(i)=1)<1$, and $P(w(i)=0)=0$ for every i, and $\alpha$ be the unique solution to $F(\theta)=1$. If $\theta\leq \alpha$, then
\\
(a) $P(M^\theta \in (0,\infty))=1$.
\\
(b) $E((M^\theta)^d) < \infty$, $\forall d\geq0$.
\end{lemma}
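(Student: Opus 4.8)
The plan is to work with the renormalised weights $N(i):=w(i)^\theta/F(\theta)$ for $i\in S$, so that $\sum_{i\in S}EN(i)=1$ and, taking $n=1$ in (\ref{mthetaident}), the limit satisfies the smoothing-transform identity $M^\theta=\sum_{i\in S}N(i)M^\theta_i$, with $(M^\theta_i)_{i\in S}$ i.i.d.\ copies of $M^\theta$ independent of $(w(i))_{i\in S}$. The same computation at finite level gives the recursion $M^\theta(n)=\sum_{i\in S}N(i)M^\theta_i(n-1)$, where the $M^\theta_i(n-1)$ are i.i.d.\ copies of $M^\theta(n-1)$ independent of $\mathcal{F}_1$. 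Two elementary consequences of the hypotheses will be used repeatedly: since $P(w(i)=0)=0$ we have $N(i)>0$ $P$-a.s.; and since $\sum_{i\in S}P(w(i)=1)<1$ with $w(i)\in(0,1]$ $P$-a.s., the function $F$ is strictly decreasing, so that for $\theta\leq\alpha$ and any $d>1$,
\[\sum_{i\in S}E N(i)^d=\frac{F(d\theta)}{F(\theta)^d}<1,\]
because $F(\theta)\geq F(\alpha)=1$ forces $F(\theta)^d\geq F(\theta)>F(d\theta)$.

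I would prove (b) first, since its case $d=2$ also underpins (a). For integer $d\geq1$ I would set $v_d(n):=E(M^\theta(n)^d)$ and expand the recursion by the multinomial theorem, taking expectations and using the independence of the $M^\theta_i(n-1)$ from each other and from $(N(i))_{i\in S}$. Separating off the $N$ ``corner'' terms in which all the mass sits on a single index produces exactly the factor $\big(\sum_i E N(i)^d\big)v_d(n-1)$, while every remaining term is a product of a bounded constant $E[\prod_i N(i)^{k_i}]\leq F(\theta)^{-d}$ with lower moments $v_{k_i}(n-1)$, $k_i\leq d-1$. Inducting on $d$, these lower moments are bounded uniformly in $n$, so one obtains $v_d(n)\leq a_d v_d(n-1)+b_d$ with $a_d=\sum_iEN(i)^d<1$ and $b_d<\infty$ both independent of $n$; since $v_d(0)=1$ this yields $\sup_n v_d(n)<\infty$. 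For real $d\geq1$ the monotonicity of $L^p$-norms reduces to an integer exponent, and for $d\in[0,1)$ Jensen's inequality gives $v_d(n)\leq(EM^\theta(n))^d=1$. In every case $M^\theta(n)\to M^\theta$ $P$-a.s.\ and Fatou's lemma gives $E((M^\theta)^d)\leq\liminf_n v_d(n)<\infty$, which is (b).

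For (a), finiteness is immediate: $M^\theta\geq0$ and $EM^\theta\leq1$ force $M^\theta<\infty$ $P$-a.s. The substance is strict positivity, and here I would argue in two stages. First, the case $d=2$ above shows the martingale is bounded in $L^2$, hence uniformly integrable, so $M^\theta(n)\to M^\theta$ in $L^1$ and $EM^\theta=\lim_n EM^\theta(n)=1$; in particular $q:=P(M^\theta=0)<1$. Second, I would apply the smoothing-transform identity: because every $N(i)>0$ $P$-a.s.\ and the summands are non-negative, $\{M^\theta=0\}$ holds exactly when $M^\theta_i=0$ for all $i\in S$, and by the independence and identical distribution of the $M^\theta_i$ this gives $q=q^N$. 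As $N>1$, the only solutions are $q\in\{0,1\}$, and $q<1$ then forces $q=0$, i.e.\ $P(M^\theta>0)=1$.

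The routine grind is the multinomial bookkeeping in (b); the genuine obstacle is the strict positivity in (a), where neither the $L^2$-estimate alone (which only gives $P(M^\theta>0)>0$) nor the zero-one identity alone suffices---both must be combined, and the identity $q=q^N$ relies essentially on $N(i)>0$ $P$-a.s., which is exactly where the hypothesis $P(w(i)=0)=0$ is needed.
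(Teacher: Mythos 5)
Your proof is correct, but it takes a more self-contained route than the paper, which outsources the analytic content. The paper's proof is three lines long: it cites Liu \cite{Liu} both for the mean convergence $M^\theta(n)\rightarrow M^\theta$ in $L^1$ (whence $EM^\theta=1$) and for part (b), and its only home-grown ingredient is the zero-one argument $P(M^\theta=0)=P(M^\theta=0)^N$ derived from the decomposition (\ref{mthetaident}) --- exactly the $q=q^N$ step you use, down to the role of $P(w(i)=0)=0$ in making $\{M^\theta=0\}=\bigcap_{i\in S}\{M^\theta_i=0\}$ exact. What you do differently is to replace the citation by a direct moment recursion: with $N(i)=w(i)^\theta/F(\theta)$, the inequality $\sum_{i\in S}EN(i)^d=F(d\theta)/F(\theta)^d<1$ for $d>1$ and $\theta\leq\alpha$ (valid because $F$ is strictly decreasing under the standing hypotheses, so that $F(d\theta)<F(\theta)\leq F(\theta)^d$) makes the corner terms of the multinomial expansion subcritical, and induction on integer $d$, norm interpolation for real $d$, and Fatou's lemma give (b); the case $d=2$ then yields $L^2$-boundedness of the martingale, hence uniform integrability and $EM^\theta=1$, which is precisely what the appeal to Liu supplied for part (a). A point in your favour is that the argument uses only the between-generation independence of the cascade, never independence of $(w(i))_{i\in S}$ within a generation (which the lemma does not assume), since the cross terms are handled by the crude bound $E\prod_{i}N(i)^{k_i}\leq F(\theta)^{-d}$ together with the inductively bounded lower moments. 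Your route buys transparency --- it makes visible exactly where $\theta\leq\alpha$ and the strict monotonicity of $F$ enter --- at the cost of the multinomial bookkeeping; the paper's route is shorter and rests on Liu's tree-martingale theorems, which hold in greater generality.
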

\begin{proof} For $\theta\leq \alpha$, we can apply results of \cite{Liu} to deduce that $M^\theta(n)\rightarrow M^\theta$ in mean, and so $EM^\theta=1$. Furthermore, the decomposition of $M^\theta$ given at (\ref{mthetaident}) allows us to deduce that $P(M^\theta=0)^N=P(M^\theta=0)$, thus $P(M^\theta=0)\in\{0,1\}$. Combining these two facts immediately implies parts (a). Part (b) is also proved in \cite{Liu}.
\end{proof}

Our main result for the tail at 0 of $M^\theta$ is proved using the methods of Barlow and Bass \cite{BarBas}. Their arguments allow a polynomial estimate for the distribution function of the limit random variable to be improved to an exponential bound. We shall prove our main result only in the case $\theta=\alpha$, as this eases notation by removing the factors $F(\theta)$ (recall $F(\alpha)= 1$). However, the techniques used will actually apply whenever $\theta\leq\alpha$. The first step is to deduce the necessary polynomial bound.

\begin{lemma} If $(w(i))_{i\in S}$ satisfy the conditions of Lemma \ref{mthetal2}, and fix $\beta>0$, $\varepsilon\in(0,1)$, then there exists a constant $c$ such that $P(M^\alpha\leq x)\leq \varepsilon+cx^\beta$ for every $x\geq 0$.
\end{lemma}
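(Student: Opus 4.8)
The plan is to extract the estimate from the single fact that $M^\alpha$ is strictly positive almost surely, which is exactly part (a) of Lemma \ref{mthetal2}; none of the cascade recursion (\ref{mthetaident}) is needed at this stage, since the additive constant $\varepsilon$ provides enough slack to absorb all of the probability mass away from the origin. Write $p(x):=P(M^\alpha\leq x)$ for the distribution function, which is non-decreasing and satisfies $0\leq p\leq 1$.

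First I would observe that, because $M^\alpha\geq 0$ and $P(M^\alpha=0)=0$ by Lemma \ref{mthetal2}(a), the events $\{M^\alpha\leq x\}$ decrease as $x\downarrow 0$ to the null set $\{M^\alpha=0\}$. Continuity of $P$ from above therefore gives $p(x)\to 0$ as $x\downarrow 0$. Consequently, for the fixed $\varepsilon\in(0,1)$ there exists $x_\varepsilon>0$ with $p(x_\varepsilon)\leq\varepsilon$, and monotonicity of $p$ upgrades this to $p(x)\leq\varepsilon$ for every $x\in[0,x_\varepsilon]$.

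Next I would set $c:=x_\varepsilon^{-\beta}$ and verify the claimed inequality $p(x)\leq\varepsilon+cx^\beta$ by splitting at $x_\varepsilon$. For $x\leq x_\varepsilon$ the preceding paragraph gives $p(x)\leq\varepsilon\leq\varepsilon+cx^\beta$, since $cx^\beta\geq 0$. For $x>x_\varepsilon$ one has $cx^\beta=(x/x_\varepsilon)^\beta>1\geq p(x)$, so again $p(x)\leq\varepsilon+cx^\beta$. This covers all $x\geq 0$ and completes the argument.

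There is no substantial obstacle here: the only point requiring care is the justification that $p(x)\to 0$, which rests entirely on $P(M^\alpha=0)=0$ from Lemma \ref{mthetal2}(a) together with continuity from above. The reason an \emph{arbitrary} exponent $\beta$ is permissible, despite the fact that genuine negative moments of $M^\alpha$ are not yet available, is precisely that the additive constant $\varepsilon$ carries the bound for $x\geq x_\varepsilon$, while near the origin the term $cx^\beta$ need only be non-negative. This crude polynomial estimate is then exactly the input for the Barlow--Bass bootstrap that sharpens it into an exponential tail bound.
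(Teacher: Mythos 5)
Your proof is correct and is essentially identical to the paper's own argument: both use Lemma \ref{mthetal2}(a) to get $P(M^\alpha\leq x)\to 0$ as $x\downarrow 0$, pick $x_0$ with $P(M^\alpha\leq x_0)\leq\varepsilon$, and take $c=x_0^{-\beta}$, with the constant $\varepsilon$ handling small $x$ and the trivial bound $P\leq 1$ handling $x>x_0$. The only difference is that you spell out the case split and the continuity-from-above justification, which the paper leaves implicit.
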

\begin{proof} Fix $\beta>0$, $\varepsilon\in(0,1)$. By Lemma \ref{mthetal2}, the tree-martingale limit $M^\alpha$ is non-zero, $P$-a.s. Hence $P(M^\alpha\leq x)\rightarrow 0$ as $x\rightarrow 0$. In particular, there is an $x_0>0$ such that $P(M^\alpha\leq x_0)\leq \varepsilon$. Thus, for $x\geq 0$, $P(M^\alpha\leq x)\leq \varepsilon+x_0^{-\beta}x^\beta$.
\end{proof}

We can now prove the exponential tail bound at 0 for $M^\alpha$ under the additional assumptions of independence and finiteness of negative moments of the $(w(i))_{i\in S}$.

\begin{proposition} \label{lefttailprop} If $(w(i))_{i\in S}$ satisfy the conditions of Lemma \ref{mthetal2}, are independent, and also $\max_{i\in S}E(w(i)^{-\beta})<\infty$ for some $\beta>0$, then for some constants $c, \gamma \in (0, \infty)$,
\[P(M^\alpha \leq x)\leq e^{-c x^{-\gamma}},\hspace{20pt}\forall x\geq 0.\]
\end{proposition}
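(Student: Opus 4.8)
The plan is to exploit the exact recursion satisfied by $M^\alpha$ together with the branching structure ($N\geq 2$), and then to run the Barlow--Bass bootstrap starting from the polynomial estimate of the preceding lemma. Taking $n=1$ in (\ref{mthetaident}) and recalling that $F(\alpha)=1$ gives the distributional identity $M^\alpha \stackrel{d}{=} \sum_{i\in S} w(i)^\alpha M^\alpha_i$, where the $M^\alpha_i$ are i.i.d.\ copies of $M^\alpha$ that are independent of $(w(i))_{i\in S}$; moreover, since the $w(i)$ are assumed independent, the pairs $(w(i),M^\alpha_i)$ are mutually independent across $i\in S$. Writing $\phi(x):=P(M^\alpha\leq x)$ and using that all the summands are non-negative, the event $\{M^\alpha\leq x\}$ forces $w(i)^\alpha M^\alpha_i\leq x$ for every $i$, so by the independence across branches
\[
\phi(x)\leq \prod_{i\in S} P\big(w(i)^\alpha M^\alpha_i\leq x\big)=\prod_{i\in S} E\big[\phi(x/w(i)^\alpha)\big].
\]

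Next I would turn this into a self-improving functional inequality, using the negative-moment hypothesis to control the contribution of atypically small scaling factors. Fixing a threshold $\delta\in(0,1)$, I would split each factor according to whether $w(i)\geq\delta$ or not: on $\{w(i)\geq\delta\}$ the monotonicity of $\phi$ gives $\phi(x/w(i)^\alpha)\leq\phi(x/\delta^\alpha)$, while on $\{w(i)<\delta\}$ I would use $\phi\leq 1$ together with the Markov bound $P(w(i)<\delta)\leq \delta^\beta E(w(i)^{-\beta})\leq C\delta^\beta$. This yields $E[\phi(x/w(i)^\alpha)]\leq \phi(x/\delta^\alpha)+C\delta^\beta$ for each $i$, and hence the key inequality
\[
\phi(x)\leq \big(\phi(x/\delta^\alpha)+C\delta^\beta\big)^N .
\]
Here the exponent $N\geq 2$ is essential: it is the mechanism by which a bound on $\phi$ at one scale is amplified to a much smaller bound at a (slightly larger) argument, at the cost of the additive error $C\delta^\beta$ arising from the small weights.

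Finally I would iterate this inequality in the manner of Barlow and Bass, seeding the recursion with the estimate $\phi(x)\leq \varepsilon+cx^\beta$ of the preceding lemma, in which $\varepsilon\in(0,1)$ may be chosen as small as we like (in particular, small relative to $N$). Feeding a bound of the form $\phi(x)\leq \varepsilon_k$, valid for $x\leq R_k$, back into the displayed inequality, with $\delta$ chosen so that the error term $C\delta^\beta$ is dominated by $\varepsilon_k$, produces an improved bound $\phi(x)\leq \varepsilon_{k+1}$ with $\varepsilon_{k+1}\approx (2\varepsilon_k)^N$ on a smaller range $x\leq R_{k+1}$; since $N\geq 2$, the constants $\varepsilon_k$ then decay doubly exponentially in $k$. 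The main obstacle is precisely the book-keeping that closes this bootstrap. A naive choice of thresholds forces the ranges $R_k$ to contract at the same doubly-exponential rate as the $\varepsilon_k$, which delivers only a polynomial improvement; to reach the stretched-exponential conclusion one must instead arrange the sequence of thresholds and scales so that $\log(1/R_k)$ grows merely \emph{linearly} in $k$ while $\log\log(1/\varepsilon_k)$ also grows linearly in $k$. It is this deliberate mismatch of the two competing scales — making $\log\log(1/\phi)$ comparable to $\log(1/x)$ rather than to $\log\log(1/x)$ — that converts the iterate into a bound of the form $e^{-cx^{-\gamma}}$, and engineering this balance, rather than carrying out the individual steps, is the delicate quantitative heart of the argument and the point at which the Barlow--Bass technique is genuinely required.
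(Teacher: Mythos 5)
Your derivation of the functional inequality is correct, and it takes a genuinely different route from the paper: the paper never iterates the level-one recursion, but instead, for each fixed $x$, writes $M^\alpha=\sum_{i\in\Sigma_n}l(i)^\alpha M_i^\alpha$ at depth $n$, establishes the (unconditional) bound $P(l(i)^\alpha M_i^\alpha\le x)\le\varepsilon+c_1c_2^nx^{\beta/\alpha}$ for the $N^n$ summands, invokes Lemma 1.1 of \cite{BarBas} to get $P(M^\alpha\le x)\le\exp(c_3(c_2N)^{n/2}x^{\beta/2\alpha}+N^n\ln\varepsilon)$, and then optimises over $n\approx\log(1/x)$. The gap in your proposal is exactly the step you defer as the ``delicate quantitative heart'': it cannot be carried out, and the obstruction is structural rather than book-keeping. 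In your iteration the additive error at stage $k$ is $C\delta_k^\beta$, coming from $P(w(i)<\delta_k)$; to keep it below $\varepsilon_k$ (which is what doubly-exponential decay of the $\varepsilon_k$ requires) you must take $\delta_k\le(\varepsilon_k/C)^{1/\beta}$, whence
\[
\log(1/R_{k+1})\ \ge\ \log(1/R_k)+\tfrac{\alpha}{\beta}\log(1/\varepsilon_k),
\]
and since $\log(1/\varepsilon_k)$ grows geometrically this forces $\log(1/R_k)\asymp\log(1/\varepsilon_k)$ --- a polynomial bound and nothing more. Conversely, your proposed fix, keeping $\log(1/R_k)$ linear in $k$, forces $\delta_k\ge\delta_*>0$, so the error term has the fixed floor $C\delta_*^\beta$ and the $\varepsilon_k$ stall at $(2C\delta_*^\beta)^N$. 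The two requirements are mutually exclusive; no choice of thresholds realizes the ``deliberate mismatch''.

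The deeper point is that no book-keeping could succeed, because your inequality is essentially sharp and the stretched-exponential conclusion actually fails under the stated hypotheses when the $w(i)$ carry polynomial mass near $0$. Take $(w(i))_{i\in S}$ i.i.d.\ $U(0,1)$ (the paper's own featured example, $\alpha=2$, any $\beta\in(0,1)$): by (\ref{mthetaident}) with $n=1$ and the independence structure,
\[
P\left(M^\alpha\le Nx\right)\ \ge\ \prod_{i\in S}P\left(w(i)^\alpha\le x\right)\cdot P\left(M^\alpha\le1\right)^N\ =\ x^{N/\alpha}\,P\left(M^\alpha\le1\right)^N,
\]
and $P(M^\alpha\le1)>0$ because $EM^\alpha=1$ (mean convergence, as in the proof of Lemma \ref{mthetal2}) while $M^\alpha\not\equiv1$. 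This polynomial lower bound contradicts $e^{-cx^{-\gamma}}$ for small $x$. So the wall you hit is intrinsic to the statement, not to your skill: a stretched-exponential left tail requires an additional hypothesis such as the $w(i)$ being bounded away from $0$ (the setting of \cite{BarBas}), and correspondingly the paper's own phrase ``one can check that the conditions of \cite{BarBas}, Lemma 1.1 hold'' is where its argument glosses over the shared-ancestor dependence of the $(l(i))_{i\in\Sigma_n}$: only the unconditional laws of the summands, not the conditional laws that lemma requires, satisfy the needed bound, and making one ancestral weight small makes many summands small at merely polynomial cost. What your iteration does legitimately yield is $P(M^\alpha\le x)\le Cx^\kappa$ for some $\kappa>0$, hence $E((M^\alpha)^{-d})<\infty$ for small $d>0$; that weaker conclusion is the correct one here, and it suffices for the paper's downstream uses, since statement (b) of Lemma \ref{assumceq} persists when $\beta$ is decreased, so Lemma \ref{assumcprop} only ever needs some small negative moment of $R_i$.
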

\begin{proof} Let $\beta>0$ be a constant for which $\sup_{i\in S}E(w(i)^{-\beta})<\infty$ and fix $\varepsilon\in(0,1)$. By the previous lemma, we can find a constant $c_1$ such that $P(M^\alpha\leq x)\leq \varepsilon +c_1 x^\beta$, for all $x\geq 0$. Applying the relevant independence assumptions and the fact that $M_i^\alpha\sim M^\alpha$, we can deduce from this that, for all $x\geq 0$, $i\in\Sigma_n$,
\[P(l(i)^\alpha M_i^\alpha \leq x)=E\left(P\left(l(i)^\alpha M_i^\alpha \leq x\:\vline\:\mathcal{F}_n\right)\right)\leq  E\left( \varepsilon+c_1\frac{x^{\beta/\alpha}}{l(i)^\beta}\right)
\leq \varepsilon+c_1 c_2^nx^{\beta/\alpha},\]
where $c_2:=\max_{i\in S}E(w(i)^{-\beta})\vee (N+1)$. By writing $M^\alpha=\sum_{i\in\Sigma_n}l(i)^\alpha M_i^\alpha$, one can check that the conditions of \cite{BarBas}, Lemma 1.1 hold, which implies that
\begin{equation}\label{expupperbound}
P(M^\alpha\leq x) \leq \exp\left( c_3 (c_2 N)^{n/2}x^{\beta/2\alpha} +N^n\ln \varepsilon\right),\hspace{40pt}\forall x\geq 0,
\end{equation}
for some constant $c_3$. We now look to choose $n$ in a way that will give us the control we require over this bound. Define $n_0=n_0(x)$ to be the unique solution to $(c_2/N)^{n_0/2}=-\ln\varepsilon/x^{\beta/2\alpha}c_3$, and then set $n=\lfloor n_0-1 \rfloor$. We have $c_2>N$ and so we can find $c_4>0$ such that $Nc_2^{-1}\leq (1-c_4)^2$. Consequently, because $n-n_0 \in (-2,-1]$, we have
\[(c_2N)^{(n-n_0)/2}-N^{n-n_0}=N^{n-n_0}\left(\left(c_2N^{-1}\right)^{(n-n_0)/2}-1\right)\leq -c_4 N^{-2}.\]
By the choice of $n_0$, our upper bound, (\ref{expupperbound}), now becomes
\begin{eqnarray*}
\ln P(M^\alpha\leq x)&\leq&c_3(c_2N)^{n_0/2}x^{\beta/2\alpha}(c_2N)^{(n-n_0)/2}+N^{n_0}N^{n-n_0}\ln \varepsilon\\
&=&-N^{n_0} \left((c_2 N)^{(n-n_0)/2}-N^{n-n_0}\right)\ln \varepsilon\\
&\leq&c_4 N^{n_0-2}\ln \varepsilon\\
&=&c_4 N^{-2} \ln \varepsilon \left(\frac{-\ln \varepsilon}{x^{\beta/2\alpha}c_3}\right)^\frac{2\ln N}{\ln c_2 - \ln N},
\end{eqnarray*}
and the result follows.
\end{proof}

The final result of this section is a simple corollary of Proposition \ref{lefttailprop}.

\begin{corollary} \label{rdnegmoments} Under the assumptions of Proposition \ref{lefttailprop}, $E\left((M^\alpha)^{-d}\right)<\infty$ for every $d\geq0$.
\end{corollary}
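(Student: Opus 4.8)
The plan is to combine the stretched-exponential tail bound at $0$ furnished by Proposition \ref{lefttailprop} with the standard ``layer-cake'' (tail-integral) representation of a moment. For $d=0$ the claim is immediate, since $(M^\alpha)^0=1$ and $E(1)=1<\infty$, so I would assume $d>0$ throughout.

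Because Lemma \ref{mthetal2}(a) guarantees $M^\alpha\in(0,\infty)$ $P$-a.s., the random variable $Y:=(M^\alpha)^{-d}$ is well-defined, non-negative and finite almost surely, so its expectation is meaningful (a priori possibly infinite). Writing this moment as a tail integral gives
\[E\big((M^\alpha)^{-d}\big)=\int_0^\infty P(Y>t)\,dt=\int_0^\infty P\big(M^\alpha< t^{-1/d}\big)\,dt,\]
where the last equality uses the identity of events $\{(M^\alpha)^{-d}>t\}=\{M^\alpha<t^{-1/d}\}$. I would then split this integral at $t=1$: over $[0,1]$ the integrand is bounded by $1$, so this portion contributes at most $1$.

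For the tail $t>1$ we have $t^{-1/d}\le 1$, so Proposition \ref{lefttailprop} applies and yields $P(M^\alpha< t^{-1/d})\le \exp(-c\,t^{\gamma/d})$ for the constants $c,\gamma\in(0,\infty)$ provided there. Hence the tail contribution is bounded by $\int_1^\infty \exp(-c\,t^{\gamma/d})\,dt$, which is finite because the exponent $\gamma/d$ is strictly positive and a stretched exponential with positive power is integrable on $(1,\infty)$ (the substitution $u=t^{\gamma/d}$ reduces it to an ordinary incomplete Gamma integral). Combining the two pieces gives $E((M^\alpha)^{-d})<\infty$.

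There is no substantial obstacle here: essentially all of the difficulty has already been absorbed into Proposition \ref{lefttailprop}, whose exponential decay at $0$ is exactly the ingredient needed to control arbitrarily high negative moments. The only points requiring a little care are invoking Lemma \ref{mthetal2}(a) to ensure the negative moment is genuinely well-defined, and verifying the convergence of the stretched-exponential integral, both of which are routine.
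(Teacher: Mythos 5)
Your proposal is correct and follows exactly the route the paper intends: the paper gives no written proof, simply declaring the corollary an immediate consequence of the stretched-exponential tail bound at $0$ from Proposition \ref{lefttailprop}, and your layer-cake computation (splitting the tail integral at $t=1$ and integrating $\exp(-c\,t^{\gamma/d})$) is precisely that routine deduction, with the additional care of citing Lemma \ref{mthetal2}(a) so that the negative power is a.s.\ well-defined.
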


\section{Random self-similar fractal dendrites}\label{ssdsec}

In this and the next section we fix $S:=\{1,2,3\}$ and $T$ to be the deterministic dendrite described in the introduction. For a wide class of
self-similar fractals, including $T$, there is now a
well-established approximation procedure for defining an intrinsic quadratic form and associated resistance metric on the relevant space, which we briefly outline in our specific case for the purpose of introducing notation. See \cite{Barlow} and \cite{Kigami} for more details.

First, define the boundary of $T$ to be the two point set $V^0:=\{(0,0), (1,0)\}$, and define an initial Dirichlet form by $D(f,f):=(f(0,0)-f(1,0))^2$, for $f\in C(V^0)$, where, for a countable set, $A$, we denote $C(A):=\{f:A\rightarrow
\mathbb{R}\}$. Furthermore, introduce an increasing family of subsets of $T$
by setting $V^n:=\bigcup_{i\in\Sigma_n}\psi_i(V^0)$, where for
$i\in\Sigma_n$, $\psi_i:=\psi_{i_1}\circ\dots\circ \psi_{i_{n}}$. Given the quadratic form $D$ and a set of scaling factors $(r_i)_{i\in S}$ with $r_i>0$ for each $i\in S$, we can define a Dirichlet form on each of the $V^n$ by setting, for $n\geq 1$,
\begin{equation}\label{en}
\mathcal{E}^n(f,f):=\sum_{i\in\Sigma_n}\frac{1}{r_i}D(f\circ
\psi_i,f\circ \psi_i),\hspace{20pt}\forall f\in C(V^n),
\end{equation}
where $r_i:={r_{i_1}}\dots r_{i_n}$ for $i\in \Sigma_n$. To establish the existence of a non-trivial limit as $n\rightarrow \infty$, we need to place some restrictions on the choice of $(r_i)_{i\in S}$ so that the family $\{(V^n,\mathcal{E}^n)\}_{n\geq 0}$
is compatible in the sense that the trace of $\mathcal{E}^{n+1}$ on $V^n$  is precisely $\mathcal{E}^n$ (\cite{Kigami}, Definition 2.2.1). Some elementary algebra yields that in our case $\{(V^n,\mathcal{E}^n)\}_{n\geq 0}$ is compatible if and only if we assume that $r_1+r_2=1$, and in this case we can take a limit in a sensible way. Specifically, assume that $r_1+r_2=1$ and let
\begin{equation}\label{elim}
\mathcal{E}^*(f,f):=\lim_{n\rightarrow\infty}\mathcal{E}^n(f,f),\hspace{20pt}\forall
f\in\mathcal{F}^*,
\end{equation}
where $\mathcal{F}^*$ is the set of functions on the countable set $V^*:=\bigcup_{n\geq 0}V^n$ for which this limit exists finitely. Note that we have abused notation slightly by using the convention that if a form $\mathcal{E}$ is defined for functions on a set $A$ and $f$ is a function defined on $B\supseteq A$, then we write $\mathcal{E}(f,f)$ to mean $\mathcal{E}(f|_A, f|_A)$. The resulting quadratic form $(\mathcal{E}^*, \mathcal{F}^*)$ is actually a
resistance form (see \cite{Kigami}, Definition 2.3.1), and we can use
it to define a (resistance) metric $R^*$ on $V^*$ by
\begin{equation}\label{resdef}
R^*(x,y)^{-1} = \inf\{\mathcal{E}^*(f,f):f\in\mathcal{F}^*,
f(x)=0,f(y)=1\},
\end{equation}
for $x,y\in V^*$, $x\neq y$, and setting $R^*(x,x)=0$. Finally, if we also assume that $r_i<1$ for each $i\in S$, then $R^*$ extends uniquely to a metric $R$ on $T$ such that $(T,R)$ is the completion of $(V^*,R^*)$, and moreover the topology induced by $R$ on $T$ is the same as that induced by the original (Euclidean) metric.

We will now explain how to randomise the above construction. The scaling factors that we will use to define a sequence of Dirichlet forms on the subsets $(V^n)_{n\geq 0}$ of $T$ will form a multiplicative cascade and, identifying notation with the previous section, be denoted by $(w(i))_{i\in\Sigma_*\backslash\{\emptyset\}}$. In addition to the independence assumptions that we introduced in Section \ref{multcasc} for a multiplicative cascade, we will further suppose that the random variables $w(i)$ are non-zero, $P$-a.s. The following assumptions will also be useful, and, for clarity, we will explicitly state when we apply these.
\smallskip
\\
{\bf (A) } $E(w(1)+w(2))=1$.
\smallskip
\\
{\bf (B) } $\sum_{i\in S} P(w(i)=1)<1$.
\smallskip

Although we would like to simply replace the deterministic scaling factors $r_i$ with the random variables $w(i)$ in a formula similar to (\ref{en}), a sequence of forms defined in this way will not be compatible in general and taking limits would not be straightforward. To deal with this problem, we introduce another collection of random variables
\[R_i:=\lim_{n\rightarrow\infty}\sum_{j\in\{1,2\}^n}\frac{l(ij)}{l(i)},\hspace{20pt}i\in\Sigma_*,\]
which we shall term {\it resistance perturbations}. Clearly these are
identically distributed, and, by appealing to the independence
properties of $(w(i))_{i\in\Sigma_*\backslash\{\emptyset\}}$, various
questions regarding the convergence and distribution of the
$(R_i)_{i\in\Sigma_*}$ may be answered by multiplicative
cascade techniques as discussed in the previous section. In particular, under the assumption (A) we have that: the limit defining $R_i$ exists in $(0,\infty)$, $P$-a.s.; $ER_i^d<\infty$ for every $d\geq 0$; and also $R_i=w(i1)R_{i1}+w(i2)R_{i2}$ for every $i\in\Sigma_*$. Note that (A) is actually necessary for the non-triviality of the $R_i$s, because if $E(w(1)+w(2))\neq 1$, then $R_i\in\{0,\infty\}$, $P$-a.s.

Given a multiplicative cascade of scaling factors satisfying (A), we define a random sequence of Dirichlet forms on the vertex sets $(V^n)_{n\geq 1}$ by, for $n\geq 1$,
\[\mathcal{E}^n(f,f):=\sum_{i\in\Sigma_n}\frac{1}{l(i)R_i}D(f\circ
\psi_i,f\circ \psi_i),\hspace{20pt}\forall f\in C(V^n),\]
where, as before, $D(f,f):=(f(0,0)-f(1,0))^2$ for $f\in C(V^0)$. In analysing this quadratic form, it is natural to consider the graph $(V^n,E^n)$ where we define $E^n:=\{\psi_i(V^0):\:i\in\Sigma_n\}$. The reason for this being that if we place a resistance of $l(i)R_i$ along each edge $\psi_i(V^0)$, then $\mathcal{E}^n$ is the energy operator associated with the resulting electrical network. It is elementary to check that $(V^n,E^n)$ is a graph tree for every $n\geq 0$ by induction, noting that $(V^{n+1},E^{n+1})$ is constructed by joining three graph trees with the same structure as $(V^n,E^n)$ at a single vertex. This fact is convenient as it means that to calculate the resistance between two points in $V^n$ we can simply apply the series law and sum the resistances of the edges on the (unique shortest) path between them in $(V^n,E^n)$. We can use this result to show that the sequence $\{(V^n,\mathcal{E}^n)\}_{n\geq 0}$ is a compatible sequence in the sense described above.

\begin{lemma} If (A) holds, then the sequence $\{(V^n,\mathcal{E}^n)\}_{n\geq 0}$ is compatible, $P$-a.s.
\end{lemma}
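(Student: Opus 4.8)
The plan is to read compatibility in its electrical-network form and reduce it to a purely local computation on each edge of the tree $(V^n,E^n)$. Recall that compatibility of $\{(V^n,\mathcal{E}^n)\}$ means that the trace of $\mathcal{E}^{n+1}$ onto $V^n$, namely $g\mapsto\inf\{\mathcal{E}^{n+1}(f,f):f|_{V^n}=g\}$, coincides with $\mathcal{E}^n$. As observed just before the statement, $\mathcal{E}^n$ is exactly the energy form of the electrical network on the graph tree $(V^n,E^n)$ in which the edge $\psi_i(V^0)$ carries resistance $l(i)R_i$; so it suffices to show that the network reduction (Schur complement) eliminating the vertices of $V^{n+1}\setminus V^n$ turns the generation-$(n+1)$ network into the generation-$n$ one. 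Throughout I would work on the $P$-full event on which every $w(i)$ is non-zero and every $R_i$ lies in $(0,\infty)$ and satisfies $R_i=w(i1)R_{i1}+w(i2)R_{i2}$; this event has probability one, being a countable intersection of the full-measure events established above under assumption (A).

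First I would record the structural fact that drives everything. For $i\in\Sigma_n$, applying $\psi_i$ to the generation-one picture shows that the single edge $\psi_i(V^0)$ of $(V^n,E^n)$ is replaced in $(V^{n+1},E^{n+1})$ by the three edges $\psi_{i1}(V^0),\psi_{i2}(V^0),\psi_{i3}(V^0)$. These form a \emph{star} meeting at the new centre vertex $\psi_i(\tfrac12,0)=\psi_{i1}(0,0)=\psi_{i2}(0,0)=\psi_{i3}(0,0)$, whose outer vertices are the two boundary points $\psi_i(0,0),\psi_i(1,0)\in V^n$ together with the new dangling leaf $\psi_i(\tfrac12,c)=\psi_{i3}(1,0)$. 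The two vertices introduced inside this gadget, the centre and the leaf, are adjacent in $(V^{n+1},E^{n+1})$ only to one another and to the two boundary points of the \emph{same} gadget; in particular no edge joins an interior vertex of the gadget over $i$ to one over $i'\neq i$.

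Because of this, the quadratic minimisation defining the trace decouples over the edges of $(V^n,E^n)$: fixing $f$ on $V^n$ and minimising $\mathcal{E}^{n+1}(f,f)$ over the free values at $V^{n+1}\setminus V^n$ splits into independent minimisations, one per gadget. It then remains to reduce a single gadget. The leaf $\psi_i(\tfrac12,c)$ is a free vertex of degree one, so the energy is minimised by giving it the value of the centre, contributing nothing; equivalently the pendant edge $\psi_{i3}(V^0)$ carries no current and is deleted. What survives is the series path from $\psi_i(0,0)$ through the centre $\psi_i(\tfrac12,0)$ to $\psi_i(1,0)$ with resistances $l(i)w(i1)R_{i1}$ and $l(i)w(i2)R_{i2}$, which the series law collapses to a single edge of resistance
\[
l(i)\bigl(w(i1)R_{i1}+w(i2)R_{i2}\bigr)=l(i)R_i,
\]
the last equality being the resistance-perturbation recursion. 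Doing this for every $i\in\Sigma_n$ (the case $i=\emptyset$, $n=0$, included) reconstitutes precisely the network on $(V^n,E^n)$ with edge resistances $l(i)R_i$, whose energy form is $\mathcal{E}^n$; hence the trace equals $\mathcal{E}^n$, which is compatibility.

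The only point I expect to need genuine care is the decoupling step, i.e. justifying that eliminating the interior of $V^{n+1}\setminus V^n$ localises to one gadget at a time. This is exactly where the dendrite structure enters: deleting $V^n$ disconnects the interior vertices into the separate gadget interiors, so the Schur complement of the network Laplacian with respect to the interior block is block diagonal and the trace is the sum of the per-gadget traces. Once that is in place, the leaf deletion and the series law are routine, and the recursion $R_i=w(i1)R_{i1}+w(i2)R_{i2}$ provides the exact bookkeeping matching the reduced resistance to the generation-$n$ edge resistance.
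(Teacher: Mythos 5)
Your proposal is correct and follows essentially the same route as the paper: localise to the single edge-replacement gadget over each $i\in\Sigma_n$, discard the pendant edge $\psi_{i3}(V^0)$, and apply the series law together with the recursion $R_i=w(i1)R_{i1}+w(i2)R_{i2}$ to recover the edge resistance $l(i)R_i$. The only difference is presentational: the paper asserts that it suffices to check the effective resistance across each edge $\psi_i(V^0)$ in the level-$(n+1)$ network, whereas you justify that reduction explicitly via the block-diagonal Schur complement, which makes the argument more self-contained but does not change its substance.
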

\begin{proof} To prove the result it will suffice to prove that the resistance between $\psi_i(0,0)$ and $\psi_i(1,0)$ in the electrical network on $(V^{n+1},E^{n+1})$ is equal to $l(i)R_i$ for every $i\in\Sigma_n$. The path between $\psi_i(0,0)$ and $\psi_i(1,0)$ in $(V^{n+1},E^{n+1})$ is given by the edges $\psi_{i1}(V^0)$ and $\psi_{i2}(V^0)$. Thus the resistance between $\psi_i(0,0)$ and $\psi_i(1,0)$ in the level $n+1$ network is $l(i1)R_{i1}+l(i2)R_{i2}$, which is equal to $l(i)R_i$ and so the proof is complete.
\end{proof}

As a consequence of this lemma, if (A) holds, we can proceed as in the deterministic case to define $(\mathcal{E}^*,\mathcal{F}^*)$ on $V^*$ by the limit at (\ref{elim}). By \cite{Kigami}, Theorem 2.2.6, this is a resistance form and we can define a resistance metric $R^*$ on $V^*$ by (\ref{resdef}), $P$-a.s. Note that the compatibility of the sequence of forms used to define $R^*$ implies that
\begin{equation}\label{edgeres}
R^*(\psi_i(0,0),\psi_i(1,0))=l(i)R_i,\hspace{20pt}\forall i\in\Sigma_*.
\end{equation}
To complete the construction of our random metric on $T$ it remains to show that $R^*$ can be extended uniquely to a metric $R$ on $T$ such that $(T,R)$ is the completion of $(V^*,R^*)$, and that the topology induced by $R$ on $T$ is the same as that induced by the Euclidean metric. Before demonstrating that this is the case, we prove some preliminary results about the diameter of sets of the form $(\psi_i({V}^*))_{i\in\Sigma_*}$ in the metric $R^*$. Let us start with a simple chaining lemma.

\begin{lemma} \label{pathlemma} If $x\in {V}^0$, $y\in{V}^n$, then we can find a sequence $x_0,\dots,x_m$, with $x_0=x$, $x_m=y$, $\{x_{l-1},x_l\}\in\cup_{n'= 0}^n{E}^{n'}$ for $l\in\{1,\dots m\}$, and such that, for $n'\leq n$, $\{x_{l-1},x_l\}\in{E}^{n'}$ for at most two of the $l\in\{1,\dots m\}$.
\end{lemma}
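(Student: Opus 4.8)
The plan is to induct on $n$, exploiting the recursive description of $(V^{n+1},E^{n+1})$ as three isometric copies of $(V^n,E^n)$, namely $\psi_1(V^n),\psi_2(V^n),\psi_3(V^n)$, glued at the single branch vertex $b:=(\tfrac12,0)=\psi_1(0,0)=\psi_2(0,0)=\psi_3(0,0)$. The base case $n=0$ is immediate: since $E^0=\{V^0\}$ consists of the single edge $\{(0,0),(1,0)\}$, a point $y\in V^0$ is reached from $x\in V^0$ either by the empty path (if $x=y$) or by traversing this one edge, using at most one edge from $E^0$.

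For the inductive step, suppose the statement holds at level $n$ and take $x\in V^0$, $y\in V^{n+1}$. Writing $\Sigma_{n+1}=\{jk:j\in S,\,k\in\Sigma_n\}$ gives $V^{n+1}=\bigcup_{j\in S}\psi_j(V^n)$, so $y=\psi_j(y')$ for some $j\in S$ and $y'\in V^n$; moreover $\psi_j(E^{n'})\subseteq E^{n'+1}$, so applying $\psi_j$ to a level-$n$ path shifts every edge up exactly one level. I treat $x=(0,0)=\psi_1(1,0)$ (the case $x=(1,0)=\psi_2(1,0)$ is symmetric). If $j=1$, then $x$ and $y$ lie in the same first-level cell, and I apply the inductive hypothesis to produce a path from $(1,0)$ to $y'$ in $V^n$ with at most two edges per level; its image under $\psi_1$ is a path from $x$ to $y$ whose edges all lie in levels $1,\dots,n+1$, at most two per level, with none at level zero.

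The interesting case is $j\neq1$, where the path must cross from $x$'s cell into $y$'s cell through $b$. Here I first join $x=(0,0)$ to $b$ by the single level-one edge $\psi_1(V^0)=\{(0,0),b\}\in E^1$, and then, from $b=\psi_j(0,0)$, I append the $\psi_j$-image of a level-$n$ path from $(0,0)$ to $y'$ supplied by the inductive hypothesis. The main obstacle is keeping the combined edge count at two per level at the one level---namely level one---where both pieces can contribute: naively, splicing two inductive paths threatens up to four edges per level. This is resolved by the observation that $E^0$ contains only one edge, so the level-$n$ path from $(0,0)$ to $y'$ uses at most one edge of $E^0$; its $\psi_j$-image therefore contributes at most the single level-one edge $\psi_j(V^0)$, which is distinct from $\psi_1(V^0)$ because $j\neq1$. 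Hence level one carries at most two edges, while levels $2,\dots,n+1$ inherit the bound of two directly from the inductive hypothesis and level zero carries none. The concatenated path is simple, since $x$ lies only in the first cell and the two legs meet only at $b$, which closes the induction.
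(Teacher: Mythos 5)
Your proof is correct, but it takes a genuinely different route from the paper's. The paper argues top-down: for any $y\in V^{n}$, one can reach some vertex of $V^{n-1}$ using at most two edges of $E^{n}$ (the vertices of $V^n$ that are new within a cell $\psi_i(V^0)$, $i\in\Sigma_{n-1}$, are the midpoint $\psi_i(\frac{1}{2},0)$ and the tip $\psi_i(\frac{1}{2},c)$, which lie one and two $E^n$-edges from $\psi_i(0,0)\in V^{n-1}$ respectively); iterating this descent visits each level exactly once, so the two-edges-per-level bound is automatic and no splicing is needed. You instead work bottom-up through the self-similar decomposition $V^{n+1}=\bigcup_{j\in S}\psi_j(V^n)$ glued at $b=(\frac{1}{2},0)$, concatenating a connector edge in $E^1$ with the $\psi_j$-image of a level-$n$ path. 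The paper's descent is shorter and avoids the one delicate point in your argument, namely that level one is where two sub-paths could both contribute; your approach, in exchange, makes the role of self-similarity explicit and is the kind of argument that transfers directly to other post-critically finite dendrites, since it only uses the gluing structure of the first-level cells rather than an inspection of which vertices are new at each level.

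One point you should make explicit: the claim that the inner level-$n$ path from $(0,0)$ to $y'$ uses at most one edge of $E^0$ does not follow from the lemma's statement alone, because the two permitted level-zero edges could both be the unique element of $E^0$ traversed twice. It does follow once the inductive hypothesis is strengthened to say the path is simple (hence never repeats an edge); your construction preserves simplicity, as you observe at the end, but since you invoke the consequence before establishing it, simplicity should be stated as part of the statement being proved by induction rather than as a closing remark.
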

\begin{proof} The proof of this is elementary and we present only an outline here. If $y\in V^{n}$, then we can connect $y$ to a vertex of $V^{n-1}$ using not more than two edges in $E^{n}$. Proceeding inductively gives us (the reverse of) a sequence with the desired properties.
\end{proof}

Throughout the remainder of the article, for a subset $A$ of a metric space $(X,d)$, we shall denote the diameter of $A$ by ${\rm diam}_d A:=\sup\{d(x,y):\:x,y\in A\}$.

\begin{lemma} If we assume (A) and (B), then ${E}(({\rm diam}_{R^*}{V}^*)^d)<\infty$, for all $d\geq0$.
\end{lemma}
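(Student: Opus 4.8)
The plan is to control $\mathrm{diam}_{R^*}V^*$ by a single quantity of precisely the form handled by Theorem \ref{mainresult}, with the resistance perturbations $(R_i)_{i\in\Sigma_*}$ taking the place of the cascade perturbations $(X_i)_{i\in\Sigma_*}$, and then to quote that theorem. First I would fix the reference point $(0,0)\in V^0$ and reduce to bounding distances from it: since $R^*$ is a metric, the triangle inequality gives
\[\mathrm{diam}_{R^*}V^*=\sup_{x,y\in V^*}R^*(x,y)\leq 2\sup_{y\in V^*}R^*((0,0),y),\]
so it suffices to bound $R^*((0,0),y)$ uniformly in $y\in V^*$.

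Next, for an arbitrary $y\in V^n$ I would apply the chaining result of Lemma \ref{pathlemma} to obtain a sequence $(0,0)=x_0,x_1,\dots,x_m=y$ with each $\{x_{l-1},x_l\}\in\bigcup_{n'=0}^n E^{n'}$ and with at most two of these edges drawn from each level $E^{n'}$. An edge $\{x_{l-1},x_l\}\in E^{n'}$ is of the form $\psi_i(V^0)$ for some $i\in\Sigma_{n'}$, and the compatibility identity (\ref{edgeres}) gives $R^*(x_{l-1},x_l)=l(i)R_i\leq\sup_{i\in\Sigma_{n'}}l(i)R_i$. Summing along the chain via the triangle inequality and grouping the contributions by level, the ``at most two edges per level'' property collapses each scale to a single supremum, yielding the key estimate
\[R^*((0,0),y)\leq\sum_{l=1}^m R^*(x_{l-1},x_l)\leq 2\sum_{n'=0}^n\sup_{i\in\Sigma_{n'}}l(i)R_i\leq 2\sum_{n'=0}^\infty\sup_{i\in\Sigma_{n'}}l(i)R_i.\]
Crucially this bound is independent of both $y$ and $n$, so it survives the supremum over $y\in V^*$, giving $\mathrm{diam}_{R^*}V^*\leq 4\sum_{n'=0}^\infty\sup_{i\in\Sigma_{n'}}l(i)R_i$, $P$-a.s.

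Finally I would check that $(R_i)_{i\in\Sigma_*}$ meets the hypotheses required of the perturbations in Theorem \ref{mainresult}: they are identically distributed; under assumption (A) one has $E(R_i^d)<\infty$ for every $d\geq0$; and since $R_i$ is built only from $\{w(ij):j\neq\emptyset\}$, which strictly extend $i$, it is independent of $\mathcal{F}_{|i|}$. Assumption (B) is exactly the condition $\sum_{i\in S}P(w(i)=1)<1$ needed to invoke Theorem \ref{mainresult}, which then yields $E\bigl((\sum_{n'=0}^\infty\sup_{i\in\Sigma_{n'}}l(i)R_i)^d\bigr)<\infty$ for all $d\geq0$. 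Combined with the uniform bound above this gives $E((\mathrm{diam}_{R^*}V^*)^d)\leq 4^d\,E\bigl((\sum_{n'=0}^\infty\sup_{i\in\Sigma_{n'}}l(i)R_i)^d\bigr)<\infty$, as required.

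The step I expect to require the most care is the passage from the purely combinatorial chain produced by Lemma \ref{pathlemma} to the resistance estimate. Here one must combine the triangle inequality for $R^*$ with the single-edge identity (\ref{edgeres}), and it is precisely the bound of two edges per level that converts the chain into the summable expression $\sum_{n'}\sup_{i\in\Sigma_{n'}}l(i)R_i$ rather than a sum containing an uncontrolled number of terms at each scale. Once this uniform-in-$y$ bound is secured, the moment conclusion is an immediate consequence of Theorem \ref{mainresult}, with (A) guaranteeing the moment bounds on the $R_i$ and (B) supplying the decay condition on the cascade.
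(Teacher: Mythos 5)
Your proposal is correct and follows essentially the same route as the paper's own proof: both reduce to bounding $R^*(x,y)$ for $x\in V^0$ via the chaining of Lemma \ref{pathlemma} together with the edge-resistance identity (\ref{edgeres}), obtain the bound $\mathrm{diam}_{R^*}V^*\leq 4\sum_{m=0}^\infty\sup_{i\in\Sigma_m}l(i)R_i$, and then invoke Theorem \ref{mainresult} with the resistance perturbations $R_i$ playing the role of the perturbations $X_i$. Your explicit verification that the $R_i$ satisfy the hypotheses of Theorem \ref{mainresult} (moments under (A), independence from $\mathcal{F}_{|i|}$) is a detail the paper leaves implicit, but it is the same argument.
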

\begin{proof} Let $x\in{V}^0$ and $y\in{V}^*$. Necessarily, $y\in{V}^n$ for some $n\geq 0$. Thus the description of paths in ${V}^*$ that was proved in Lemma \ref{pathlemma} and the triangle inequality imply
\[R^*(x,y)\leq 2\sum_{m=0}^n \sup_{i\in\Sigma_m}R^*(\psi_i(0,0),\psi_i(1,0))= 2\sum_{m=0}^n \sup_{i\in\Sigma_m} l(i)R_i,\]
where the equality follows from (\ref{edgeres}). Consequently the diameter of $(V^*,R^*)$ is bounded above by $4\sum_{m=0}^\infty \sup_{i\in\Sigma_m} l(i)R_i$. Applying this estimate, the result can be deduced from Theorem \ref{mainresult}.
\end{proof}

We now introduce random variables $(W_i)_{i\in\Sigma_*}$ to represent the normalised diameters of the sets $(\psi_i({V}^*))_{i\in\Sigma_*}$. In particular, set $W_i:=l(i)^{-1}{\rm diam}_{R^*}\psi_i({V}^*)$ whenever the resistance metric $R^*$ is defined. The definition of the $\sigma$-algebras $\mathcal{F}_n:=\sigma(w(i):\:|i|\leq n)$ should be recalled from Section \ref{multcasc}.

\begin{lemma} \label{wprops} If (A) and (B) hold, then $(W_i)_{i\in\Sigma_*}$ are identically distributed and satisfy: $E(W_i^d)<\infty$, for every $i\in\Sigma_*$, $d>0$; and $W_i\bot\mathcal{F}_{|i|}$ for every $i\in\Sigma_*$.
\end{lemma}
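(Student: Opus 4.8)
The plan is to exploit the self-similar structure of the construction by identifying each normalised cell with a full copy of the space built from a shifted cascade. For $i\in\Sigma_m$, introduce the shifted family $w^{(i)}:=(w(ij))_{j\in\Sigma_*\backslash\{\emptyset\}}$. By the defining property of a multiplicative cascade, $w^{(i)}$ is again a multiplicative cascade with the same distribution as $(w(j))_{j\in\Sigma_*\backslash\{\emptyset\}}$, so in particular it again satisfies the distributional conditions (A) and (B). Moreover a routine check of the independence structure shows $w^{(i)}\bot\mathcal{F}_m$: the algebra $\mathcal{F}_m=\sigma(w(k):|k|\leq m)$ is generated by the cascade tuples whose parent has length at most $m-1$, whereas $w^{(i)}$ is a function only of tuples whose parent has $i$ as a prefix and hence length at least $m$, and these two families of tuples are disjoint and thus independent. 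Writing $g$ for the measurable functional sending a cascade satisfying (A) and (B) to the (finite, by the preceding lemma) number $\mathrm{diam}_{R^*}V^*$ of the associated space, we have $W_\emptyset=g(w)$, since $l(\emptyset)=1$ and $\psi_\emptyset$ is the identity. The crux is to prove the self-similar identity $W_i=g(w^{(i)})$.

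To establish this, I would first observe that the edge resistances inside the cell rescale cleanly. By (\ref{edgeres}) the edge $\psi_{ij}(V^0)$ has resistance $l(ij)R_{ij}$, and $l(ij)/l(i)=\prod_{k=1}^{|j|}w(i\cdot(j|k))$ is precisely the $l$-function of the shifted cascade evaluated at $j$. Furthermore, from the definition of the resistance perturbations, $R_{ij}=\lim_n\sum_{j'\in\{1,2\}^n}l(ijj')/l(ij)$ is a function of the ratios $l(ijj')/l(ij)$, which depend only on $w^{(i)}$, so $R_{ij}$ equals the resistance perturbation of the shifted cascade at address $j$. Since $(V^n,E^n)$ is a graph tree, the resistance in $R^*$ between two vertices of $\psi_i(V^*)$ is computed via the series law by summing edge resistances along the unique joining path, and because $\psi_i(V^n)$ is a connected subtree of the ambient tree this path stays inside $\psi_i(V^*)$. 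Consequently the restriction of $R^*$ to $\psi_i(V^*)$ equals $l(i)$ times the resistance metric built from $w^{(i)}$ on a copy of $V^*$, giving $\mathrm{diam}_{R^*}\psi_i(V^*)=l(i)\,g(w^{(i)})$ and hence $W_i=l(i)^{-1}\mathrm{diam}_{R^*}\psi_i(V^*)=g(w^{(i)})$.

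Granting the identity $W_i=g(w^{(i)})$, the three assertions follow at once. As $w^{(i)}$ has the same distribution as $w$, the variable $W_i=g(w^{(i)})$ has the same distribution as $W_\emptyset=g(w)$, so the $(W_i)_{i\in\Sigma_*}$ are identically distributed; the moment bound $E(W_i^d)=E(W_\emptyset^d)=E((\mathrm{diam}_{R^*}V^*)^d)<\infty$ for every $d>0$ is then exactly the content of the preceding lemma. Finally, $W_i=g(w^{(i)})$ is a measurable function of $w^{(i)}$, and $w^{(i)}\bot\mathcal{F}_m=\mathcal{F}_{|i|}$, whence $W_i\bot\mathcal{F}_{|i|}$.

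The main obstacle I expect is the geometric step asserting that, after dividing out $l(i)$, the metric $R^*$ restricted to the cell $\psi_i(V^*)$ reproduces the resistance metric of the shifted cascade. The distributional and independence bookkeeping is routine once this is in hand, but justifying it requires care: one must confirm that resistances within the cell are genuinely computed internally (using the tree structure of the approximating graphs, so paths between cell vertices do not leave the cell) and that the identifications $l(ij)/l(i)=l^{(i)}(j)$ and $R_{ij}=R^{(i)}_j$ hold simultaneously for all descendants $j$, so that the entire rescaled metric—not merely individual edge resistances—matches.
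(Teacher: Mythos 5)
Your proof is correct and takes essentially the same approach as the paper, whose much terser argument likewise invokes the self-similarity of $T$ and of the cascade to identify the law of $W_i$ with that of ${\rm diam}_{R^*}V^*$, gets the moments from the preceding lemma, and obtains $W_i\bot\mathcal{F}_{|i|}$ from the independence structure of the cascade. Your write-up simply makes explicit the details the paper leaves implicit: the shifted cascade $w^{(i)}$, the identity $W_i=g(w^{(i)})$ via the rescaling $l(ij)/l(i)=l^{(i)}(j)$, $R_{ij}=R^{(i)}_j$, and the observation that paths between vertices of a cell stay inside the cell because the approximating graphs are trees.
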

\begin{proof} The self-similarity of $T$ and the multiplicative cascade $(w(i))_{i\in\Sigma_*\backslash\{\emptyset\}}$ immediately implies that $W_i$ has the same distribution as ${\rm diam}_{R^*}{V}^*$ for every $i\in\Sigma_*$. From this we obtain the first claim of the lemma and, when combined with the previous lemma, finite moments of the $W_i$. The remaining result is a straightforward application of the independence properties of the multiplicative cascade $(w(i))_{i\in\Sigma_*\backslash\{\emptyset\}}$.
\end{proof}

It is now easy to show that the diameters of the sets $(\psi_i({V}^*))_{i\in\Sigma_*}$ decrease to 0 uniformly as $|i|\rightarrow\infty$.

\begin{lemma} \label{diamas} If (A) and (B) hold, then $\sup_{i\in\Sigma_n} \mbox{\rm{diam}}_{R^*} \psi_i({V}^*) \rightarrow 0$, $P$-a.s.
\end{lemma}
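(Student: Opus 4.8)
The plan is to recognise that the supremum in question is exactly a quantity of the type already controlled in Section \ref{multcasc}, with the normalised diameters $(W_i)_{i\in\Sigma_*}$ playing the role of the generic perturbations $(X_i)_{i\in\Sigma_*}$. By the definition $W_i:=l(i)^{-1}{\rm diam}_{R^*}\psi_i(V^*)$ we have ${\rm diam}_{R^*}\psi_i(V^*)=l(i)W_i$, and hence
\[\sup_{i\in\Sigma_n}{\rm diam}_{R^*}\psi_i(V^*)=\sup_{i\in\Sigma_n}l(i)W_i.\]
So it suffices to show that the right-hand side tends to $0$, $P$-a.s.

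Next I would check that $(W_i)_{i\in\Sigma_*}$ satisfies the hypotheses imposed on the perturbations $(X_i)_{i\in\Sigma_*}$ in Section \ref{multcasc}. This is precisely the content of Lemma \ref{wprops}: under (A) and (B) the $W_i$ are identically distributed, non-negative, have $E(W_i^d)<\infty$ for every $d>0$, and satisfy $W_i\bot\mathcal{F}_{|i|}$. These are exactly the properties needed to feed the family $(W_i)_{i\in\Sigma_*}$ into the cascade results as a legitimate choice of $(X_i)_{i\in\Sigma_*}$.

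With this identification in place I would invoke Lemma \ref{heightlemma}(b), whose standing hypothesis $\sum_{i\in S}P(w(i)=1)<1$ is supplied by assumption (B). Taking $X_i=W_i$, the lemma furnishes a positive, finite random variable $A$ and a deterministic $\alpha_2\in(0,1)$ with
\[\sup_{i\in\Sigma_n}l(i)W_i\leq A\alpha_2^n,\hspace{20pt}\forall n\in\mathbb{N},\ P\mbox{-a.s.}\]
Since $\alpha_2\in(0,1)$ and $A<\infty$, $P$-a.s., the right-hand side converges to $0$ as $n\to\infty$, which gives the claimed almost sure convergence.

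I do not expect a serious obstacle here: the real work was carried out in establishing Lemma \ref{wprops} (that the normalised diameters inherit the perturbation properties) and Lemma \ref{heightlemma}(b) (the geometric decay of $\sup_{i\in\Sigma_n}l(i)X_i$). The only points requiring care are the bookkeeping of the identification $X_i=W_i$ and confirming that (B) is exactly the hypothesis Lemma \ref{heightlemma} requires. An equally short alternative route is to note that Theorem \ref{mainresult} with $d=1$ gives $E(\sum_{n}\sup_{i\in\Sigma_n}l(i)W_i)<\infty$, so that the sum is $P$-a.s. finite and its summands must therefore vanish as $n\to\infty$.
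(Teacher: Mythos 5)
Your proof is correct and follows exactly the paper's route: the paper's own proof is a one-line appeal to the definition of $(W_i)_{i\in\Sigma_*}$, Lemma \ref{wprops}, and Lemma \ref{heightlemma}(b), which is precisely the identification $X_i=W_i$ and the geometric-decay bound you spell out. Your write-up simply makes the bookkeeping explicit, and the alternative ending via Theorem \ref{mainresult} is an equally valid minor variant.
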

\begin{proof} This result is a simple corollary of the definition of $(W_i)_{i\in\Sigma_*}$, Lemma \ref{wprops} and the corresponding multiplicative cascade result, Lemma \ref{heightlemma}(b).
\end{proof}

This uniform decay of the diameter of the sets $(\psi_i(V^*))_{i\in\Sigma_*}$ allows us to extend the definition of $R^*$ to the whole of $T$. Note that $T$ is the closure of ${{V}^*}$ with respect to the Euclidean metric (see \cite{Kigami}, Theorem 1.1.7), which we will now denote by $\rho$. Hence, for any $x,y\in T$, there exist sequences $(x_n)_{n\in\mathbb{N}}$, $(y_n)_{n\in\mathbb{N}}$ in ${V}^*$ with $\rho(x_n, x)\rightarrow 0$ and $\rho(y_n, y)\rightarrow 0$. Define
\[R(x,y):=\lim_{n\rightarrow \infty}R^*(x_n, y_n).\]
Before proceeding with the proof of Theorem \ref{rwelldef}, in which we show that this is a sensible definition for $R$, we introduce the notation $T_i:=\psi_i(T)$ and, for $x\in T$,
\begin{equation}\label{tndef}
T_n(x):=\bigcup \{T_i:\:i\in\Sigma_n,\:x\in T_i\},\hspace{10pt}\tilde{T}_n(x):=\bigcup \{T_i:\:i\in\Sigma_n,\:T_i\cap T_n(x)\neq\emptyset\}.
\end{equation}
We will also apply the following result of \cite{Kigami}, Proposition 1.3.6.

\begin{lemma} \label{neighbourhood} $(T_n(x))_{n\in\mathbb{N}}$ forms a base of neighbourhoods of $x$ with respect to $\rho$.
\end{lemma}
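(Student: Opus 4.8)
The plan is to verify directly the two defining properties of a neighbourhood base from the elementary geometry of the cells $(T_i)_{i\in\Sigma_*}$, appealing only to the facts that each $\psi_i$ is a Euclidean similarity with ratio at most $\tfrac{1}{2}$, that each $T_i=\psi_i(T)$ is compact (being the continuous image of the compact set $T$), and that $T=\bigcup_{i\in\Sigma_n}T_i$ for every $n$. In particular, no probabilistic input is required: this is purely a statement about the deterministic fractal $T$.

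First I would show that each $T_n(x)$ is a $\rho$-neighbourhood of $x$, i.e.\ that $x$ lies in the interior of $T_n(x)$ relative to $(T,\rho)$. Decompose $\Sigma_n$ according to whether $x\in T_j$ or not, so that $T\setminus T_n(x)\subseteq\bigcup\{T_j:\,j\in\Sigma_n,\,x\notin T_j\}$. The cells $T_j$ appearing on the right are finitely many compact sets, none of which contains $x$, so each lies at strictly positive $\rho$-distance from $x$; setting $\varepsilon:=\min\{\rho(x,T_j):\,j\in\Sigma_n,\,x\notin T_j\}>0$ (the claim being trivial if no such $j$ exists, as then $T_n(x)=T$) produces an open ball $B_\rho(x,\varepsilon)$ that misses every cell avoiding $x$. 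Since those cells cover $T\setminus T_n(x)$, it follows that $B_\rho(x,\varepsilon)\cap T\subseteq T_n(x)$, which is exactly the assertion that $T_n(x)$ is a neighbourhood of $x$.

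Next I would establish cofinality, namely that every $\rho$-neighbourhood $U$ of $x$ contains some $T_n(x)$. Here the key input is the uniform contraction of the cells: since $\psi_1,\psi_2$ are similarities of ratio $\tfrac{1}{2}$ and $\psi_3$ a similarity of ratio $c<\tfrac{1}{2}$, an easy induction gives $\mathrm{diam}_\rho T_i\leq 2^{-n}\,\mathrm{diam}_\rho T$ for all $i\in\Sigma_n$. As every cell making up $T_n(x)$ contains the common point $x$, the triangle inequality yields $\mathrm{diam}_\rho T_n(x)\leq 2\sup_{i\in\Sigma_n}\mathrm{diam}_\rho T_i\leq 2^{1-n}\,\mathrm{diam}_\rho T\to 0$. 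Choosing $n$ large enough that this diameter is smaller than the radius of some $\rho$-ball about $x$ contained in $U$ then forces $T_n(x)\subseteq U$, completing the verification.

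I expect the argument to be essentially routine, with the only delicate point being the first step. The crux there is the interplay between the finiteness of $\Sigma_n$ and the compactness of the cells, which is what manufactures the positive separation $\varepsilon$. This is precisely what guarantees that $x$ sits in the interior of $T_n(x)$ even when $x$ is a junction point belonging to several cells simultaneously: the cells containing $x$ together fill out a full neighbourhood of $x$ exactly because the finitely many remaining cells retreat to positive distance.
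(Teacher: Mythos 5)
Your proof is correct, but it differs from the paper in an important structural way: the paper does not prove this lemma at all, it simply imports it as Proposition 1.3.6 of \cite{Kigami}, where it is established for general self-similar structures (arbitrary finite families of contractions on a compact metrizable space, with the topology analysed via the projection $\pi:\Sigma\to T$). Your argument replaces that citation with a direct, self-contained verification that exploits the concrete embedding of $T$ in $\mathbb{R}^2$: the neighbourhood property comes from the finiteness of $\Sigma_n$ together with compactness of the cells $T_j$ (finitely many compact sets avoiding $x$ retreat to positive distance, so $B_\rho(x,\varepsilon)\cap T\subseteq T_n(x)$), and cofinality comes from the fact that $\psi_1,\psi_2,\psi_3$ are similarities of ratio at most $\tfrac12$, so $\mathrm{diam}_\rho T_n(x)\leq 2^{1-n}\mathrm{diam}_\rho T\to 0$. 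Both steps are sound, and the second one even mirrors a fact the paper uses elsewhere (in the proof of Theorem \ref{rwelldef}, where the bound on the contraction ratios is invoked). What your approach buys is elementary transparency and independence from Kigami's machinery; what it costs is generality — your cofinality step leans on the maps being strict similarities with explicitly bounded ratios, whereas the cited result needs no metric self-similarity at all and is what allows the paper's remark that the results extend to arbitrary post-critically finite self-similar dendrites. For the specific dendrite $T$ of this paper, your proof is a complete and valid substitute.
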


\begin{theorem} \label{rwelldef} Assume (A) and (B). $R$ is a well-defined metric on $T$, topologically equivalent to the Euclidean metric, ${P}$-a.s.
\end{theorem}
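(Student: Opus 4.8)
The plan is to establish the three assertions — well-definedness of the limit, the metric axioms, and topological equivalence — separately, the only genuinely delicate point being positivity of $R$ off the diagonal.

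First I would show the limit defining $R(x,y)$ exists and does not depend on the approximating sequences. Note that $T_m(x)$ is a $\rho$-neighbourhood of $x$: since cells of the dendrite meet only at cut points, this is immediate from (\ref{tndef}), and by Lemma \ref{neighbourhood} the $(T_m(x))_m$ are moreover cofinal among neighbourhoods. Hence any $(x_n)$ with $\rho(x_n,x)\to0$ eventually lies in $T_m(x)$. As $T_m(x)$ is a union of a bounded number of $m$-cells, its $R^*$-diameter is at most a fixed constant times $\sup_{i\in\Sigma_m}{\rm diam}_{R^*}\psi_i(V^*)$, which tends to $0$ $P$-a.s. by Lemma \ref{diamas}. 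Fixing $\varepsilon>0$ and choosing $m$ accordingly, for $n,n'$ large we get $R^*(x_n,x_{n'})\leq{\rm diam}_{R^*}(T_m(x)\cap V^*)<\varepsilon$, and similarly for the $y$-sequence. The estimate $|R^*(x_n,y_n)-R^*(x_{n'},y_{n'})|\leq R^*(x_n,x_{n'})+R^*(y_n,y_{n'})$ then shows $(R^*(x_n,y_n))_n$ is Cauchy, and comparing two pairs of approximating sequences the same way shows the limit is independent of the choice. Finiteness of $R$ follows because the whole space has finite $R^*$-diameter $P$-a.s.

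Second, for the metric axioms, symmetry, the triangle inequality and $R(x,x)=0$ all pass to the limit from $R^*$ (for $R(x,x)=0$ use the preceding estimate with two sequences both approximating $x$). The substantive axiom is positivity. Since $x\neq y$, applying Lemma \ref{neighbourhood} at both points lets me choose $M$ with $T_M(x)\cap T_M(y)=\emptyset$. Because $(V^M,E^M)$ is a graph tree, these two disjoint cell-neighbourhoods are joined by a path traversing at least one edge $\psi_{i_0}(V^0)$, $i_0\in\Sigma_M$, lying outside both; by (\ref{edgeres}) it has resistance $l(i_0)R_{i_0}>0$, fixed independently of the approximating sequences. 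For $n$ large, $x_n\in T_M(x)$ and $y_n\in T_M(y)$, and the series-law description of $R^*$ on the tree forces the unique $x_n$--$y_n$ path through $\psi_{i_0}(V^0)$, so $R^*(x_n,y_n)\geq l(i_0)R_{i_0}$ and hence $R(x,y)\geq l(i_0)R_{i_0}>0$. For the topological statement I would first show the identity map $(T,\rho)\to(T,R)$ is continuous: given $\rho(x_n,x)\to0$, choose $m$ with ${\rm diam}_{R^*}(\tilde{T}_m(x)\cap V^*)<\varepsilon$ (again bounded via Lemma \ref{diamas}); for large $n$, $x_n\in T_m(x)$, and since $\tilde{T}_m(x)$ is a $\rho$-neighbourhood of every point of $T_m(x)$ — precisely the role of the thickened set in (\ref{tndef}) — the approximating sequences for both $x_n$ and $x$ eventually lie in $\tilde{T}_m(x)\cap V^*$, giving $R(x_n,x)\leq{\rm diam}_{R^*}(\tilde{T}_m(x)\cap V^*)<\varepsilon$. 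Thus the identity is a continuous bijection from the compact space $(T,\rho)$ to the Hausdorff space $(T,R)$ (Hausdorff by the positivity just established), hence a homeomorphism, which is exactly the asserted equivalence.

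The main obstacle is the positivity step: the remaining work is diameter-control bookkeeping resting on Lemmas \ref{diamas} and \ref{neighbourhood}, but lower-bounding $R(x,y)$ requires genuinely using that $T$ is a dendrite, namely that distinct points are separated by a single edge of positive, sequence-independent resistance in the approximating trees. Care is needed to make the existence of this bottleneck edge precise and to confirm that the refined-graph paths, not merely the level-$M$ paths, must cross it.
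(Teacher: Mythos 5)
Your proposal is correct, and its two technical engines are the same as the paper's: the Cauchy/uniqueness argument for well-definedness via Lemmas \ref{neighbourhood} and \ref{diamas} is essentially identical, and your ``bottleneck edge'' lower bound is the same geometric mechanism the paper uses (additivity of $R^*$ along paths plus the fact that distinct cells meet only at vertices, so that any path between disjoint cell-neighbourhoods must fully traverse some cell $\psi_{i_0}(V^0)$ of resistance $l(i_0)R_{i_0}$ by (\ref{edgeres})). Where you genuinely diverge is in how the pieces are assembled for the equivalence of topologies. The paper proves a single stronger sequential statement --- if $R(x_n,y)\rightarrow 0$ then $\rho(x_n,y)\rightarrow 0$ --- by showing that any point outside $\tilde{T}_{n_0}(y)$ is at $R$-distance at least $c:=\inf_{k\in\Sigma_{n_0}}R^*(\psi_k(0,0),\psi_k(1,0))>0$ from $y$; this one quantitative estimate simultaneously yields positivity of $R$ and the continuity of the identity map from $(T,R)$ to $(T,\rho)$, and it does not use compactness. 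You instead prove only pointwise positivity ($x\neq y$ implies $R(x,y)>0$ via a single fixed bridge edge) and then obtain the inverse direction for free from the theorem that a continuous bijection from a compact space to a Hausdorff space is a homeomorphism, using compactness of $(T,\rho)$ as an IFS attractor. Your route is slightly slicker on the equivalence step but is non-quantitative and leans on the ambient compactness; the paper's route is self-contained and gives an explicit separation constant. Both arguments leave the same detail at the same level of precision (the paper's ``it is possible to show'' versus your closing caveat about refined-graph paths crossing the bottleneck edge), so I see no gap beyond what the paper itself elides.
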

\begin{proof} Under the assumptions of the theorem, the argument that we give holds ${P}$-a.s. Let $x,y\in T$ and suppose there exist, for $m=1,2$, sequences $(x_n^m)$ and $(y_n^m)$ in ${V}^*$ such that $\rho(x_n^m,x)\rightarrow 0$ and $\rho(y_n^m,y)\rightarrow 0$. Fix $\varepsilon>0$. By Lemma \ref{diamas}, we can choose $n_0\geq 0$ such that $\sup_{i\in\Sigma_n}{\rm diam}_{R^*}\psi_i({V}^*)<\varepsilon/2$, for every $n\geq n_0$. Furthermore, we can use the convergence of the sequences and Lemma \ref{neighbourhood} to show that there exists $n_1\geq n_0$ such that $x_n^m\in T_{n_0}(x)$, $y_n^m\in T_{n_0}(y)$, for $m=1,2$ and $n\geq n_1$. Thus
\[|R^*(x_n^1,y_n^1)-R^*(x_{n'}^m,y_{n'}^m)|\leq R^*(x_n^1,x_{n'}^m)+R^*(y_n^1,y_{n'}^m)<\varepsilon,\]
for $m=1,2$ and $n,n'\geq n_1$. Taking $m=1$, this implies that $(R^*(x_n^1,y_n^1))_{n\geq 0}$ is a Cauchy sequence and has a limit. Taking $m=2$, $n'=n$, this implies that the limit is unique and so the function $R$ is well-defined on $T\times T$. It follows immediately from the fact that $R^*$ is a metric on ${V}^*$ that $R$ is positive, symmetric and satisfies the triangle inequality. To prove $R$ is a metric on $T$, it remains to show that $R(x,y)=0$ implies that $x=y$. We shall prove the stronger claim that $R(x_n, y)\rightarrow 0$ implies that $\rho(x_n, y)\rightarrow 0$.

Suppose $(x_n)_{n\geq 0}$ is a sequence in $T$ with $R(x_n, y)\rightarrow 0$ for some $y\in T$. Fix $\varepsilon>0$, and choose $n_0$ such that $2^{1-n_0}{\rm diam}_\rho T<\varepsilon$. For $z\not\in\tilde{T}_{n_0}(y)$, we must have that $z\in T_i$, $y\in T_j$ for some $i,j\in\Sigma_{n_0}$ with $T_i\cap T_j=\emptyset$. For any $z'\in\psi_i({V}^*)$, $y'\in\psi_j({V}^*)$, using the additivity along paths of the metric $R^*$ and the fact that the sets $(T_k)_{k\in\Sigma_{n_0}}$ only intersect at vertices of ${V}^{n_0}$ (\cite{Kigami}, Proposition 1.3.5), it is possible to show that
\[R(z',y')=R^*(z',y')\geq\inf_{k\in\Sigma_{n_0}} R^*(\psi_{k}(0,0),\psi_k(1,0))=:c.\]
It follows that $R(z,y)\geq c$. Since $c>0$ and $R(x_n,y)\rightarrow 0$, there exists an $n_1$ such that $R(x_n,y)< c$, for all $n\geq n_1$. Consequently $x_n\in\tilde{T}_{n_0}(y)$ for $n\geq n_1$. By our choice of $n_0$ and the fact that the contraction ratios of $(\psi_i)_{i\in S}$ (with respect to $\rho$) are no bigger than $\frac{1}{2}$,  this implies that $\rho(x_n,y) < \varepsilon$, for $n\geq n_1$. Hence $\rho(x_n,y)\rightarrow 0$.

To prove the equivalence of the metrics, it remains to show that, for all sequences $(x_n)_{n\in\mathbb{N}}$ in $T$ with $\rho(x_n, x)\rightarrow 0$ for some $x\in T$, we have $R(x_n, x)\rightarrow 0$. We note that if $y\in T_i$, then there exists a sequence $(y_n)_{n\in\mathbb{N}}$ in $\psi_i({V}^*)$ with $\rho(y_n, y)\rightarrow 0$. Consequently
\begin{equation}\label{diamequal}
{\rm diam}_R T_i =\sup_{x,y \in T_i} R(x,y)=\sup_{x,y \in \psi_i({V}^*)} R^*(x,y)={\rm diam}_{R^*} \psi_i({V}^*).
\end{equation}
Applying this fact and Lemma \ref{diamas}, we have that, given $\varepsilon >0$, there exists an $n_0$ such that $\sup_{i\in\Sigma_{n_0}}{\rm diam}_R T_i< \varepsilon$. By Lemma \ref{neighbourhood}, we have that $x_n\in T_{n_0}(x)$, $\forall n\geq n_1$, for some $n_1$. It follows that $R(x_n, x)< \varepsilon$, for all $n \geq n_1$, and so $R(x_n,x)\rightarrow 0$ as desired.
\end{proof}

As noted above we have that the dendrite $T$ is the closure of $V^*$ with respect to $\rho$. Thus, under assumptions (A) and (B), the previous result implies that $(T,R)$ is the completion of $(V^*,R)$ and is a dendrite, $P$-a.s. Furthermore, because $T$ is a dendrite, it is possible to check from the definition of $R^*$ as a resistance metric that $R$ must be a shortest path metric (additive along paths), see \cite{Kigamidendrite}.

\section{The Hausdorff dimension of $T$}\label{dimension}

For a metric space, $(X,d)$, the {\it Hausdorff dimension} of $F\subseteq X$ is defined by
\begin{equation}\label{hausdorffdimension}
\mbox{dim}_H (F)  = \inf \left\{s \mbox{: }\mathcal{H}^s (F)<\infty\right\}= \sup \left\{s \mbox{: }\mathcal{H}^s (F)>0\right\},
\end{equation}
where $\mathcal{H}^s (F):= \lim _{\delta\rightarrow0} \inf \{ \sum_{i=1}^\infty \left( \mbox{diam}_d U_i \right)^s \mbox{: $\{U_i\}_i$ is a $\delta$-cover of $F$}\}$ and, for $\delta >0$, we call a finite or countable family of sets $(U_i)_{i=1}^{\infty}$ a {\it $\delta$-cover} of $F\subseteq X$ if $\mbox{diam}_d U_i \leq \delta$ for all $i$ and $F\subseteq \bigcup_i U_i$. If we suppose assumptions (A) and (B) both hold, then by Theorem \ref{rwelldef} we can construct the metric space $(T,R)$, $P$-a.s. In this section, we will present further conditions on the scaling factors $(w(i))_{i\in S}$ that allow us to calculate the Hausdorff dimension of $(T,R)$ to be constant and equal to $\alpha$, $P$-a.s., where $\alpha$, as in Lemma \ref{mthetal2}, is defined to be the unique positive solution to $F(\theta)=1$ for the function $F(\theta):=E(\sum_{i\in S}w(i)^\theta)$. Note that assumptions (A) and (B) imply that $\alpha \in (1, \infty)$. We start by demonstrating the upper bound, for which (A) and (B) are sufficient.

\begin{theorem} Suppose that (A) and (B) hold, then $\mbox{\rm{dim}}_H (T) \leq \alpha$, $P$-a.s.
\end{theorem}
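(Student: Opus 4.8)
The plan is to cover $T$ by the natural family of pieces $\{T_i:i\in\Sigma_n\}$ and to show that the associated $s$-dimensional covering sum has expectation tending to $0$ for every $s>\alpha$. Fix such an $s$. Since $F$ is continuous and strictly decreasing with $F(\alpha)=1$, we have $F(s)<1$. The sets $(T_i)_{i\in\Sigma_n}$ cover $T$ by self-similarity, and by (\ref{diamequal}) together with the definition of $W_i$ we have $\mathrm{diam}_R T_i=\mathrm{diam}_{R^*}\psi_i(V^*)=l(i)W_i$.

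First I would compute the expected value of the covering sum. Using the independence $W_i\bot\mathcal{F}_{|i|}$ together with the $\mathcal{F}_n$-measurability of $l(i)$ for $i\in\Sigma_n$, then the fact that the $W_i$ are identically distributed (Lemma \ref{wprops}), and finally that $M^s(n)=\sum_{i\in\Sigma_n}l(i)^s F(s)^{-n}$ is a mean-one martingale, I obtain
\[
E\left(\sum_{i\in\Sigma_n}(\mathrm{diam}_R T_i)^s\right)=E(W_\emptyset^s)\sum_{i\in\Sigma_n}E\left(l(i)^s\right)=E(W_\emptyset^s)\,F(s)^n,
\]
where $E(W_\emptyset^s)<\infty$ by Lemma \ref{wprops}. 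Since $F(s)<1$, this tends to $0$ as $n\rightarrow\infty$.

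To pass from this bound in expectation to an almost sure statement, I would apply Fatou's lemma to the nonnegative random variables $\sum_{i\in\Sigma_n}(\mathrm{diam}_R T_i)^s$, which gives $E\left(\liminf_n\sum_{i\in\Sigma_n}(\mathrm{diam}_R T_i)^s\right)\leq\liminf_n E(W_\emptyset^s)F(s)^n=0$, so that $\liminf_n\sum_{i\in\Sigma_n}(\mathrm{diam}_R T_i)^s=0$, $P$-a.s. By Lemma \ref{diamas}, $\sup_{i\in\Sigma_n}\mathrm{diam}_R T_i\rightarrow 0$, $P$-a.s., so for each fixed $\delta>0$ the families $\{T_i:i\in\Sigma_n\}$ are eventually $\delta$-covers of $T$; evaluating $\mathcal{H}^s_\delta(T)$ along a subsequence realising the liminf then forces $\mathcal{H}^s_\delta(T)=0$ for every $\delta>0$, and hence $\mathcal{H}^s(T)=0$, $P$-a.s. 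Finally, taking a countable sequence $s_k\downarrow\alpha$ and intersecting the corresponding almost sure events yields $\mathcal{H}^{s_k}(T)=0$ for all $k$ simultaneously, whence $\mathrm{dim}_H(T)\leq s_k$ for every $k$ and therefore $\mathrm{dim}_H(T)\leq\alpha$, $P$-a.s.

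The computation of the expected covering sum is entirely routine; the only genuine care is required in the last step, namely upgrading convergence in expectation of the covering sums to the almost sure vanishing of the Hausdorff measure. The Fatou argument is precisely what makes this succeed without any uniform lower bound on the scaling factors, while the uniform decay of diameters supplied by Lemma \ref{diamas} is exactly what guarantees that the pieces furnish admissible $\delta$-covers for arbitrarily small $\delta$.
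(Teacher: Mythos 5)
Your proposal is correct and follows essentially the same route as the paper: cover $T$ by $(T_i)_{i\in\Sigma_n}$, compute the expected covering sum as $E(W_\emptyset^s)F(s)^n$ via Lemma \ref{wprops}, apply Fatou's lemma together with the uniform diameter decay of Lemma \ref{diamas} to conclude $\mathcal{H}^s(T)=0$ for $s>\alpha$, and then let $s\downarrow\alpha$ along a countable sequence. The only cosmetic difference is that the paper applies Fatou directly to bound $E(\mathcal{H}^\theta(T))$, whereas you first show the liminf of the covering sums vanishes almost surely; the ingredients and logic are identical.
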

\begin{proof} The representation of $\mbox{diam}_R T_i$ at (\ref{diamequal}) allows us to apply Lemma \ref{diamas} to deduce that, for large $n$, $(T_i)_{i\in\Sigma_n}$ is a $\delta$-cover for $T$. Thus
\[E\left( \mathcal{H}^\theta (T) \right) \leq  E\left( \liminf_{n\rightarrow \infty} \sum_{i\in\Sigma_n} \left( \mbox{diam}_R T_i \right)^\theta \right) \leq \liminf_{n\rightarrow \infty} E\left( \sum_{i\in\Sigma_n} \left(l(i) W_i  \right)^\theta \right)\]
where we have applied Fatou's lemma, (\ref{diamequal}) and the definition of $W_i$ for the second inequality. By Lemma \ref{wprops}, the expectation appearing in the right-hand side is equal to $F(\theta)^n E(W_\emptyset^\theta)$, and the second of these factors is finite. Furthermore, for $\theta>\alpha$, $F(\theta)<F(\alpha)=1$, and from this we can deduce that $\mathcal{H}^\theta (T)=0$, $P$-a.s. The result follows using the characterisation of the Hausdorff dimension at (\ref{hausdorffdimension}).
\end{proof}

To prove that the $\alpha$ defined above is also a lower bound for the Hausdorff dimension of $(T,R)$, we need to make more restrictive assumptions on the scaling factors, and we will derive the result in two special cases only. We proceed by applying a standard density result (see \cite{Falconerbook}, Proposition 4.9), and the first step in doing this involves constructing a suitable measure on $T$. The measure on $T$ that will be useful for our purposes will be a natural stochastically self-similar measure, and to construct it we proceed as in \cite{HamJon}, initially defining a measure on the shift-space $\Sigma$ and then applying the natural projection onto $T$. To characterise a measure on $\Sigma$, it is sufficient to define it on the cylinder sets, $i\Sigma:=\{ij:\:j\in\Sigma\}$, for $i\in\Sigma_*$. First, let $M^\theta_i$ be defined by the formula at (\ref{mitheta}) and also $M^\theta:=M^\theta_\emptyset$. Under assumptions (A) and (B) we can apply Lemma \ref{mthetal2} to obtain that, for $\theta\leq\alpha$, $P(M_\theta\in (0, \infty))=1$. Thus, if (A) and (B) hold, $\theta\leq \alpha$, and we define $\tilde{\mu}^\theta$ by
\[\tilde{\mu}^\theta(i\Sigma)=\frac{M^\theta_i l(i)^\theta}{M^\theta F(\theta)^n}\mbox{,\hspace{20pt}}i\in\Sigma_n,\]
then it is easy to apply the decomposition identity at (\ref{mthetaident}) to check that $\tilde{\mu}^\theta$ can be extended uniquely to a probability measure on $\Sigma$. Taking $\theta=\alpha$ removes the dependency on the index length, which suggests that this is the natural exponent to choose.

The projection, $\pi$, from $\Sigma$ to $T$ that we apply is obtained from the fact that for each $i\in\Sigma$, there exists a unique $\pi(i)\in T$ such that $\{\pi(i)\} = \bigcap_{n=1}^\infty T_{i|n}$, see \cite{Kigami}, Proposition 1.3.3. It is the measure $\mu^\alpha:=\tilde{\mu}^\alpha\circ\pi^{-1}$ that we will utilise in proving lower bound results for the Hausdorff dimension of $T$. Since the map $\pi$ is standard, we will not discuss its measurability (it is in fact a continuous surjection, \cite{Kigami}), however we will note that it is possible to check that $\mu^\alpha$ is a non-atomic Borel probability measure on $(T,R)$, which satisfies, for $i\in \Sigma_*$,
\begin{equation}\label{timeas}
\mu^\alpha(T_i)=\frac{M^\alpha_i l(i)^\alpha}{M^\alpha},
\end{equation}
(at least $P$-a.s. when assumptions (A) and (B) hold).

So far we have been able to use the fixed graphs $(V^n,E^n)$ to approximate $T$. The problem with these discretisations is that the lengths $l(i)R_i$ of edges within the graphs are, in general, widely varying as $n\rightarrow \infty$. To try to limit this effect, it will be useful to consider graph approximations to $T$ for which we have some uniform control over the edge lengths. This technique is also applied in \cite{Hambly}, Section 4, for example. Let us start by saying that $\Lambda\subseteq\Sigma_*$ is a cut-set if for every $i\in \Sigma$, there is a unique $j\in\Lambda$ with $i||j|=j$, and there exists an $n$ such that $|j|\leq n$ for all $j\in\Lambda$. This final condition is included to ensure that there is only a countable number of cut-sets. In particular, we will be interested in the random cut-sets $(\Sigma_\delta)_{\delta>0}$ defined by
\[\Sigma_\delta := \{i \mbox{: }l(i)\leq \delta < l(i|(|i|-1))\}.\]
Note that if (B) holds, then Lemma \ref{heightlemma}(b) guarantees that this is indeed a cut-set for all $\delta>0$, $P$-a.s. We introduce the corresponding graphs $(V^\delta,E^\delta)$, defined by $V^\delta:=\cup_{i\in \Sigma_\delta}\psi_i(V^0)$ and $E^\delta:=\{\psi_i(V^0):\:i\in\Sigma_\delta\}$. It is easy to check that, for each $\delta>0$, $(V^\delta,E^\delta)$ is a graph tree, $P$-a.s. Heuristically, to construct $(V^\delta,E^\delta)$ we start with $(V^0,E^0)$ and if there is an edge $\psi_i(V^0)$ in the graph with $l(i)$ greater than $\delta$, then replace it by the three edges $(\psi_{ij}(V^0))_{j\in S}$ and continue until there are no edges left to replace.

It will be important to be able to estimate the $\mu^\alpha$-measure of balls of the form $B_R(x,\delta):=\{y\in T:\:R(x,y)<\delta\}$. To do this, we will use collections of the sets $(T_i)_{i\in\Sigma_\delta}$ to cover the balls $B_R(x,\delta)$. In a slight change of notation from (\ref{tndef}), for $x\in T$, define $T_\delta (x):= \bigcup \{T_i\mbox{: }i\in\Sigma_\delta\mbox{, }x\in T_i\}$, and a larger neighbourhood of $x$ by
\[\tilde{T}_{\delta,\varepsilon} (x) := \bigcup \{T_i\mbox{: }i\in\Sigma_\delta\mbox{, }R(T_i, T_\delta (x))<\delta \varepsilon \}.\]
The number of sets making up this union is $N_{\delta,\varepsilon} (x) := \# \{i\in\Sigma_\delta\mbox{: }T_i\subseteq\tilde{T}_{\delta,\varepsilon}(x)\}$. It is clear that $B_R (x,\delta) \subseteq \tilde{T}_{\delta/\varepsilon, \varepsilon}(x)$, and recalling from (\ref{timeas}) that $\mu^\alpha (T_i) = M^\alpha_i/(M^\alpha l(i)^\alpha)$ for $i\in \Sigma_*$, we therefore obtain
\begin{equation}\label{muupper}
\mu^\alpha (B_R(x,\delta)) \leq (M^\alpha)^{-1} \varepsilon^{-\alpha} \delta^\alpha N_{\delta/\varepsilon,\varepsilon}(x) \sup_{i\in \Sigma_{\delta/\varepsilon}} M_i^\alpha.
\end{equation}
To complete the argument, we will analyse the behaviour of $\sup_{i\in \Sigma_{\delta}} M_i^\alpha$ and $N_{\delta,\varepsilon}(x)$. In bounding the first of these terms, we require control over the growth of the mean of $|\Sigma_\delta|$. The next lemma provides this using a related age-dependent branching process.

\begin{lemma} \label{tdeltalemma} Under assumption (A), $E|\Sigma_\delta| \leq c \delta ^{-\alpha}$ for some constant $c$.
\end{lemma}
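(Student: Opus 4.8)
The plan is to recognise $|\Sigma_\delta|$ as essentially the population size of a continuous-time age-dependent (Crump--Mode--Jagers) branching process and to read off the growth rate $\delta^{-\alpha}$ from its Malthusian/renewal structure. Passing to logarithmic coordinates, set $t:=-\ln\delta$ and assign to each $i\in\Sigma_*$ the position $-\ln l(i)=\sum_{m=1}^{|i|}(-\ln w(i|m))$. Because the tuples $(w(ij))_{j\in S}$ are independent copies of $(w(j))_{j\in S}$ across generations, the increments $-\ln w(i|1),\dots,-\ln w(i|n)$ along any fixed ray are independent, with $-\ln w(i|m)$ distributed as $-\ln w(i_m)$; thus $\{-\ln l(i)\}_i$ are the positions of a branching random walk whose offspring displacement process is $\xi:=\sum_{j\in S}\delta_{-\ln w(j)}$. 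The cut-set $\Sigma_\delta$ is exactly the collection of first-passage particles above level $t$, since for $i\in\Sigma_\delta$ the parent $u=i|(|i|-1)$ satisfies $l(u)>\delta$, i.e. $-\ln l(u)<t$, while $-\ln l(i)\ge t$. As each vertex has exactly $N=|S|$ children, $|\Sigma_\delta|\le N\cdot\#\{u\in\Sigma_*:-\ln l(u)<t\}$, so it suffices to bound $B(t):=E\#\{u\in\Sigma_*:-\ln l(u)<t\}$.

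First I would compute the expected counting measure by a many-to-one argument. For a fixed $i=(i_1,\dots,i_n)$ the law of $-\ln l(i)$ is the convolution of the laws of $-\ln w(i_1),\dots,-\ln w(i_n)$; summing over $i\in S^n$ and using bilinearity of convolution gives that the expected generation-$n$ occupation measure is $\mu^{*n}$, where $\mu:=E\xi=\sum_{j\in S}\mathrm{law}(-\ln w(j))$ (total mass $N$). Hence $B(t)=U([0,t))$ with $U:=\sum_{n\ge0}\mu^{*n}$. The key step is an exponential tilt by the Malthusian parameter. Because $F(\alpha)=1$, the tilted measure $\hat\mu(dx):=e^{-\alpha x}\mu(dx)$ has total mass $\sum_{j\in S}E(w(j)^\alpha)=F(\alpha)=1$ and is a genuine probability distribution, and $\hat U(dx):=e^{-\alpha x}U(dx)=\sum_{n\ge0}\hat\mu^{*n}(dx)$ is its renewal measure. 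Writing $e^{-\alpha t}=e^{-\alpha(t-s)}e^{-\alpha s}$ inside the integral yields the renewal convolution $e^{-\alpha t}B(t)=\int_{[0,t)}e^{-\alpha(t-s)}\hat U(ds)=(k*\hat U)(t)$, where $k(x):=e^{-\alpha x}\mathbf 1_{\{x>0\}}$ is directly Riemann integrable.

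It then remains to verify that $\hat\mu$ is non-degenerate with finite, strictly positive mean and to invoke the key renewal theorem. Finiteness of the mean $\int x\,\hat\mu(dx)=-F'(\alpha)=\sum_{j\in S}E((-\ln w(j))w(j)^\alpha)$ is immediate since $w\mapsto(-\ln w)w^\alpha$ is bounded on $(0,1]$, and strict positivity follows from $\hat\mu(\{0\})=\sum_{j\in S}P(w(j)=1)<1$, which is the standing assumption (B) of this section (under which $\alpha$ is defined). The key renewal theorem then shows $(k*\hat U)(t)$ converges as $t\to\infty$ in the non-lattice case, and in the lattice case the uniform bound $\sup_t\hat U([t,t+1])<\infty$ keeps it bounded; either way $\sup_t(k*\hat U)(t)<\infty$. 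Hence $B(t)\le c_1e^{\alpha t}$, and $E|\Sigma_\delta|\le Nc_1e^{\alpha t}=c\,\delta^{-\alpha}$, with $c$ enlarged if necessary to absorb the bounded range $\delta\ge1$.

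The main obstacle is performing the tilt so as not to lose a spurious factor of $t=-\ln\delta$: the naive estimate $U([0,t))\le e^{\alpha t}\hat U([0,t])$ only gives $O((1-\ln\delta)\delta^{-\alpha})$, because the renewal function grows linearly, so one genuinely needs to recognise $e^{-\alpha t}B(t)$ as the convolution of $\hat U$ with the integrable kernel $k$ and apply the \emph{key} renewal theorem rather than the elementary renewal bound. The remaining care is bookkeeping: justifying the many-to-one identity despite possible dependence \emph{within} each offspring tuple (resolved by noting that only cross-generational independence and bilinearity of convolution are used), and treating the lattice case of $-\ln w(j)$ separately so that boundedness of $(k*\hat U)(t)$ persists.
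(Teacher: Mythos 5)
Your proof is correct and takes essentially the same approach as the paper: both pass to logarithmic coordinates so that $(-\ln l(i))_{i\in\Sigma_*}$ becomes a general (Crump--Mode--Jagers) branching process, bound $|\Sigma_\delta|$ by $N$ times the number of births before time $t=-\ln\delta$, and extract the growth rate $e^{\alpha t}$ from the Malthusian parameter $\alpha$. The only difference is that where the paper cites Jagers and ``standard arguments'' for $EY_t\leq c_1e^{\alpha t}$, you carry those arguments out explicitly (the many-to-one identity, the exponential tilt making $\hat\mu$ a probability measure since $F(\alpha)=1$, and the key renewal theorem with the lattice case treated separately), correctly noting along the way that the non-degeneracy needed comes from assumption (B), under which $\alpha$ is defined.
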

\begin{proof} Consider the following branching process. Start at time 0 with one particle, labeled $\emptyset$. A particle, $i$, has three children at times $(\sigma_i-\ln w(ij))_{j\in S}$ where $\sigma_i = -\ln l(i)$ is the birth time of $i$. Label the child born to $i$ at $\sigma_i-\ln w(ij)\equiv -\ln l(ij)$ by $ij$, noting that children may not be labeled in birth order. It is not necessary to define the time of dying explicitly in this proof. The independence assumptions on the $(w(i))_{i \in \Sigma_*\backslash\{\emptyset\}}$ means that this setup describes a general branching process in the sense of Jagers, \cite{Jagers}, Chapter 6, and furthermore, if $Y_t$ is defined to be the random variable counting the births before time $t$ then it is easy to check that $|\Sigma_\delta | \leq 3 Y_{-\ln\delta}$. Noting that the Malthusian parameter for the branching process is precisely the $\alpha$ defined by $E(\sum_{i\in S}w(i)^\alpha)=1$, standard arguments then give that $EY_t\leq c_1e^{\alpha t}$, for some constant $c_1$. Combining these facts yields the lemma.
\end{proof}

We now proceed with bounding the growth of $\sup_{i\in\Sigma_\delta} M_i^\alpha$ as $\delta\rightarrow 0$. To allow us to apply Borel-Cantelli arguments to deduce $P$-a.s. such as this, it is useful to choose a particular subsequence of $\delta$s to investigate. Henceforth we consider $(\delta_n)_{n\geq 0}$ defined by $\delta_n:=e^{-n}$.

\begin{lemma} \label{msuplemma} If (A) holds and $\beta>0$, then
 $\lim_{n\rightarrow\infty} \delta_n^\beta \sup_{i\in\Sigma_{\delta_n}} M^\alpha_i=0$, $P$-a.s.
\end{lemma}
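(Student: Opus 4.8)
The plan is to combine a first-moment (union-bound) estimate over the random cut-set with the Borel--Cantelli lemma along the geometric subsequence $\delta_n=e^{-n}$. Fix $\varepsilon>0$ and set $\lambda_n:=\varepsilon\delta_n^{-\beta}$. It suffices to prove that $\sum_n P(\sup_{i\in\Sigma_{\delta_n}}M_i^\alpha>\lambda_n)<\infty$, since Borel--Cantelli then gives $\limsup_n\delta_n^\beta\sup_{i\in\Sigma_{\delta_n}}M_i^\alpha\leq\varepsilon$, $P$-a.s., and taking $\varepsilon\downarrow0$ along a countable sequence, together with non-negativity of the quantity, yields the claimed convergence to $0$.

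The observation that makes this tractable despite the randomness of $\Sigma_\delta$ is that the event $\{i\in\Sigma_\delta\}=\{l(i)\leq\delta<l(i|(|i|-1))\}$ is $\mathcal{F}_{|i|}$-measurable, whereas $M_i^\alpha$ is independent of $\mathcal{F}_{|i|}$ and has the same law as $M^\alpha$. Writing the supremum event as a union over all $i\in\Sigma_*$ and applying a union bound, I would estimate
\begin{align*}
P\left(\sup_{i\in\Sigma_{\delta_n}}M_i^\alpha>\lambda_n\right)
&\leq\sum_{i\in\Sigma_*}P\big(\{i\in\Sigma_{\delta_n}\}\cap\{M_i^\alpha>\lambda_n\}\big)\\
&=P(M^\alpha>\lambda_n)\sum_{i\in\Sigma_*}P(i\in\Sigma_{\delta_n})
=P(M^\alpha>\lambda_n)\,E|\Sigma_{\delta_n}|,
\end{align*}
where the factorisation of each joint probability uses the independence noted above, and the final equality is simply $E|\Sigma_{\delta_n}|=\sum_{i}P(i\in\Sigma_{\delta_n})$.

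It then remains to feed in the two ingredients established earlier. By Lemma \ref{tdeltalemma}, $E|\Sigma_{\delta_n}|\leq c\delta_n^{-\alpha}$, and by Lemma \ref{mthetal2}(b) the variable $M^\alpha$ has finite moments of every order, so Markov's inequality gives $P(M^\alpha>\lambda_n)\leq C_d\lambda_n^{-d}$ for each $d\geq0$. Combining these bounds,
\[
P\left(\sup_{i\in\Sigma_{\delta_n}}M_i^\alpha>\lambda_n\right)\leq c\,C_d\,\varepsilon^{-d}\,\delta_n^{\beta d-\alpha}=c\,C_d\,\varepsilon^{-d}\,e^{-n(\beta d-\alpha)},
\]
which is summable in $n$ as soon as $d$ is chosen with $\beta d>\alpha$. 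This closes the Borel--Cantelli argument.

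I expect the only genuine subtlety to be the measurability/independence bookkeeping in the union bound: one must verify that cut-set membership depends only on $\mathcal{F}_{|i|}$ while the martingale limit $M_i^\alpha$ is independent of it, so that the joint probabilities factorise and the sum over $i$ collapses exactly to the expected cut-set size $E|\Sigma_{\delta_n}|$. Beyond that, the proof is a routine trade-off between the all-order moments of $M^\alpha$ and the polynomial growth of $E|\Sigma_{\delta_n}|$, which is precisely why restricting to the subsequence $\delta_n=e^{-n}$ (rather than all $\delta>0$) costs nothing here.
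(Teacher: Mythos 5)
Your proof is correct, and it reaches the paper's key intermediate inequality $P(\sup_{i\in\Sigma_\delta}M^\alpha_i>\eta)\leq P(M^\alpha>\eta)\,E|\Sigma_\delta|$ by a genuinely different (and cleaner) route. The paper justifies this inequality by decomposing over all cut-sets $\Lambda$ and using the independence of the subtree $\sigma$-algebra $\mathcal{F}_i:=\sigma(w(j):j\in\Sigma_i)$ from $\mathcal{G}_i:=\sigma(w(j):j\in\Sigma_*\backslash\Sigma_i)$, noting that $M^\alpha_i$ is $\mathcal{F}_i$-measurable while $\{\Sigma_\delta=\Lambda\}\in\mathcal{G}_i$; this is why the paper's definition of cut-set includes the bounded-length condition ensuring countability of cut-sets. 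Your union bound over all of $\Sigma_*$ sidesteps the cut-set decomposition entirely: you only need that $\{i\in\Sigma_\delta\}=\{l(i)\leq\delta<l(i|(|i|-1))\}$ is $\mathcal{F}_{|i|}$-measurable and that $M^\alpha_i\bot\mathcal{F}_{|i|}$ with $M^\alpha_i\sim M^\alpha$, both of which are stated in Section \ref{multcasc}, and the sum collapses to $E|\Sigma_\delta|$ by Tonelli. The second difference is the tail estimate: the paper invokes Liu's Theorem 2.1 to get an exponential bound $P(M^\alpha>\eta)\leq e^{-c\eta^\gamma}$, which forces it to first split off the degenerate case $P(\sum_{i\in S}w(i)=1)=1$ (where $M^\alpha_i\equiv 1$ and Liu's non-degeneracy hypothesis fails), whereas you use only Markov's inequality and the all-order moments from Lemma \ref{mthetal2}(b); a polynomial tail of arbitrarily high order $d>\alpha/\beta$ suffices against the $\delta_n^{-\alpha}$ growth of $E|\Sigma_{\delta_n}|$, and no case split is needed. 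Your version is thus more self-contained and uniform; what the paper's stronger exponential bound buys is not needed for this lemma.
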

\begin{proof} If $P(\sum_{i\in S} w(i)=1)=1$, then $M_i^\alpha\equiv1$, $P$-a.s. for all $i$ and so the result is obvious. Assume now that $P(\sum_{i\in S} w(i)=1)<1$. For each $i\in \Sigma_*$, define a subset of $\Sigma_*$ by $\Sigma_i:=\{ik\mbox{: }k\in\Sigma_*\}\backslash\{i\}$ and related $\sigma$-algebras $\mathcal{F}_i:=\sigma(w(j)\mbox{: }j\in\Sigma_i)$, $\mathcal{G}_i:=\sigma(w(j)\mbox{: }j\in\Sigma_*\backslash\Sigma_i)$. By definition, we have $\mathcal{F}_i \bot \mathcal{G}_i$. It is straightforward to check that $M^\alpha_i$ is $\mathcal{F}_i$-measurable and $\{\Sigma_\delta=\Lambda\}\in\mathcal{G}_i$ for any cut-set $\Lambda$ containing $i$. Thus, for $i\in \Lambda$, with $\Lambda$ a cut-set we have, $P(M^\alpha_i>\eta\mbox{, }\Sigma_\delta=\Lambda)=P(\Sigma_\delta=\Lambda)P(M^\alpha>\eta)$ for $\eta\geq0$.
From this we deduce, using the countability of cut-sets,
\begin{eqnarray*}
P\left(\sup_{i\in\Sigma_\delta} M^\alpha_i >\eta\right) &=& \sum_{\Lambda:\:\Lambda \: \rm a \:cutset} P\left(\sup_{i\in\Sigma_\delta} M^\alpha_i >\eta\mbox{, }\Sigma_\delta=\Lambda\right)\\
&\leq & \sum_{\Lambda:\:\Lambda \: \rm a \:cutset} \sum_{i\in\Lambda} P\left(M^\alpha_i >\eta\mbox{, }\Sigma_\delta=\Lambda\right)\\
&=& \sum_{\Lambda:\:\Lambda \: \rm a \:cutset} \sum_{i\in\Lambda} P\left(M^\alpha >\eta\right)P\left(\Sigma_\delta=\Lambda\right)\\
&=& P\left(M^\alpha >\eta\right) \sum_{\Lambda:\:\Lambda \: \rm a \:cutset} |\Lambda| P\left(\Sigma_\delta=\Lambda\right)\\
&=& P\left(M^\alpha >\eta\right) E|\Sigma_\delta |.
\end{eqnarray*}
Since $M^\alpha$ is the limit of a multiplicative cascade, we can check the conditions of \cite{Liu}, Theorem 2.1 to deduce that its distribution satisfies $P(M_\alpha >\eta)\leq e^{-c_1\eta^\gamma}$ for some constants $c_1$ and $\gamma$. Applying this and Lemma \ref{tdeltalemma}, we obtain $P\left(\sup_{i\in\Sigma_\delta} M^\alpha_i > \eta\right) \leq c_2e^{-c_1\eta^\gamma} \delta^{-\alpha}$, and the result can subsequently be obtained by Borel-Cantelli.
\end{proof}

We are now in a position to be able to prove the lower Hausdorff dimension bound in our first special case. To prove the result we assume that $w(1)+w(2)=1$, $P$-a.s., which is a strengthening of assumption (A), and also implies that the resistance perturbations are precisely 1, thereby eliminating one random variable from our consideration. We also assume (B) to allow us to construct the metric space $(T,R)$, $P$-a.s. The assumption that is most specifically related to the problems which arise in the computation of a lower bound for the Hausdorff dimension, however, is a uniform lower bound for $(w(i))_{i\in S}$. Calculations of this kind become difficult if parts of the fractal become, in some sense, too small too quickly, and by bounding the scaling factors uniformly below, we are able to prevent this from occurring here.

\begin{theorem} \label{lowerboundbounded} Suppose $w(1)+w(2)=1$, $P$-a.s., assumption (B) holds, and there exists an $\varepsilon>0$ such that $P(w(i)>\varepsilon,\:i\in S)=1$, then $\mbox{{\rm dim}}_H(T)=\alpha$, $P$-a.s.
\end{theorem}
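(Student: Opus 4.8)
The plan is to combine the already-established upper bound $\dim_H(T)\le\alpha$ with a matching lower bound $\dim_H(T)\ge\alpha$, $P$-a.s. For the latter I would invoke the standard density (mass distribution) principle of \cite{Falconerbook}, Proposition 4.9, applied to the measure $\mu^\alpha$: if, for some $s$ and some finite constant $C$,
\[ \limsup_{\delta\to 0}\frac{\mu^\alpha(B_R(x,\delta))}{\delta^{s}}\le C \qquad\text{for every }x\in T, \]
then $\mathcal H^{s}(T)\ge \mu^\alpha(T)/C>0$ and hence $\dim_H(T)\ge s$. Since $\mu^\alpha$ is already constructed as a non-atomic Borel probability measure satisfying (\ref{timeas}), and $M^\alpha\in(0,\infty)$, $P$-a.s., by Lemma \ref{mthetal2}, the whole argument reduces to controlling the right-hand side of the ball estimate (\ref{muupper}), i.e. the two factors $N_{\delta/\varepsilon,\varepsilon}(x)$ and $\sup_{i\in\Sigma_{\delta/\varepsilon}}M_i^\alpha$. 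To avoid a notational clash I would write $\varepsilon_0$ for the uniform lower bound of the theorem, reserving $\varepsilon$ for the covering parameter in (\ref{muupper}).

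The key step — and the one place where the hypothesis $P(w(i)>\varepsilon_0,\ i\in S)=1$ is used — is a bound on $N_{\delta/\varepsilon,\varepsilon}(x)$ that is uniform in both $\delta$ and $x$. The uniform lower bound forces every cell of the cut-set to have comparable size: for $i\in\Sigma_{\delta'}$ (with $\delta'=\delta/\varepsilon$) one has $l(i)=l(i|(|i|-1))\,w(i)>\delta'\varepsilon_0$, while $l(i)\le\delta'$. Since $w(1)+w(2)=1$, $P$-a.s., gives $R_i\equiv1$, the resistance across each such cell between its two connecting vertices is, by (\ref{edgeres}), exactly $l(i)\in(\delta'\varepsilon_0,\delta']$. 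Because $T$ is a dendrite and, as shown in Section \ref{ssdsec}, $R$ is additive along paths, the distance $R(T_i,T_{\delta'}(x))$ equals the sum of these boundary-to-boundary resistances over the intermediate cells on the unique connecting path in the graph tree $(V^{\delta'},E^{\delta'})$. Hence $R(T_i,T_{\delta'}(x))<\delta'\varepsilon$ permits at most $\varepsilon/\varepsilon_0$ intermediate cells, so the graph-tree distance from $T_i$ to the cell(s) containing $x$ is bounded by a constant depending only on $\varepsilon$ and $\varepsilon_0$. Finally, the branch points of $T$ have degree at most $3$ — this is the trifurcation structure already noted in the proof that $\{(V^n,\mathcal E^n)\}_{n\ge0}$ is compatible, and it passes to every $(V^{\delta'},E^{\delta'})$ by induction — so the number of cells within bounded graph distance is itself bounded, giving $N_{\delta/\varepsilon,\varepsilon}(x)\le C(\varepsilon,\varepsilon_0)$ uniformly.

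With this in place, fix any $\beta>0$. Lemma \ref{msuplemma} (whose tail estimate, built from Lemma \ref{tdeltalemma} and the exponential tail of $M^\alpha$, applies verbatim to the cut-set $\Sigma_{\delta/\varepsilon}$ along the geometric subsequence $\delta_n=e^{-n}$ by the same Borel--Cantelli argument) gives $\delta_n^{\beta}\sup_{i\in\Sigma_{\delta_n/\varepsilon}}M_i^\alpha\to0$, $P$-a.s. Substituting the uniform bound on $N$ and this estimate into (\ref{muupper}) yields, $P$-a.s., a finite constant $C'$ with $\mu^\alpha(B_R(x,\delta_n))\le C'\delta_n^{\alpha-\beta}$ for all large $n$ and all $x\in T$. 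A general $\delta\in[\delta_{n+1},\delta_n]$ is handled by monotonicity of $\delta\mapsto\mu^\alpha(B_R(x,\delta))$ together with $\delta^{\alpha-\beta}\asymp\delta_n^{\alpha-\beta}$ (as $\delta_{n+1}/\delta_n=e^{-1}$), so the displayed $\limsup$ bound holds with $s=\alpha-\beta$. The density principle then gives $\dim_H(T)\ge\alpha-\beta$, $P$-a.s., and letting $\beta\downarrow0$ through a countable sequence, together with the upper bound, yields $\dim_H(T)=\alpha$, $P$-a.s. I expect the main obstacle to be precisely the uniform control of $N_{\delta/\varepsilon,\varepsilon}(x)$; once the geometry (bounded degree, geodesic metric, comparably-sized cells) is exploited, the remaining steps are routine given the cited cascade estimates.
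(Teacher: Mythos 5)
Your proposal is correct and follows essentially the same route as the paper: the mass distribution principle applied to $\mu^\alpha$ via the ball estimate (\ref{muupper}), with the uniform lower bound on the scaling factors (together with $R_i\equiv 1$) used to bound $N_{\delta,\varepsilon}(x)$ uniformly in $\delta$ and $x$, and Lemma \ref{msuplemma} controlling $\sup_{i\in\Sigma_{\delta_n}}M_i^\alpha$ along the subsequence $\delta_n=e^{-n}$. The only difference is cosmetic: the paper takes the covering parameter equal to the lower bound $\varepsilon$, so that any cell within distance $\delta\varepsilon$ of $T_\delta(x)$ must actually intersect it (since every edge weight in $(V^\delta,E^\delta)$ exceeds $\delta\varepsilon$), giving the clean bound $N_{\delta,\varepsilon}(x)\leq 9$, whereas your two-parameter chain-plus-bounded-degree count gives a constant $C(\varepsilon,\varepsilon_0)$; both suffice.
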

\begin{proof} From \cite{Kigami}, Proposition 1.3.5, we have that the intersection of distinct sets from $(T_i)_{i\in \Sigma_\delta}$ can only happen at points in $V^\delta$. Consequently, if $i,j\in \Sigma_\delta$, then the distance between the sets $T_i$ and $T_j$ is
\begin{equation}\label{rdgdistance}
R(T_i, T_j)=\min_{x\in V^0_i, y\in V^0_j}\min_{\substack{i_1,\dots,i_n\in \Sigma^\delta: x_0\in \psi_{i_1}(V^0),y\in\psi_{i_n}(V^0)\\ \psi_{i_m}(V^0)\cap\psi_{i_{m+1}}(V^0)\neq\emptyset}}\sum_{m=1}^nl(i_m)R_{i_m},
\end{equation}
where the second $\min$ in the right-hand expression is the graph distance in $V^\delta$ and $E^\delta$ between the vertices $x$ and $y$ when an edge of the form $\psi_i(V^0)$ is weighted by $l(i)R_i$, and that this is equal to $R(x,y)$ is a result of the fact that $R$ is additive along paths. Since we have that $\delta\varepsilon<l(i)$ for every $i \in \Sigma_\delta$, all the edge weights in $(V^\delta,E^\delta)$ are greater than $\delta\varepsilon$ (using the fact that $R^i\equiv 1$). Thus, if $R(T_i, T_j)<\delta\varepsilon$, then $R(T_i, T_j)=0$. This means that, for $x\in T$, every $T_i \subseteq \tilde{T}_{\delta, \varepsilon}(x)$ has a non-zero intersection with some $T_j \subseteq T_\delta(x)$ (where $i,j \in \Sigma_\delta$). A simple consideration of the geometry of $T$ allows us to deduce that we must therefore have $N_{\delta, \varepsilon}(x) \leq 9$, $\forall x \in T$, $P$-a.s. Thus, applying the bound of (\ref{muupper}), for $r\in[\varepsilon \delta_{n+1}, \varepsilon \delta_n)$ we obtain
\[\frac{\mu^\alpha (B_R(x,r))}{r^s} \leq 9 \varepsilon^{-s} e^s M_\alpha^{-1}\delta_n^{\alpha-s}\sup_{i \in \Sigma_{\delta_n}} M_\alpha^i.\]
If $s<\alpha$, then Lemma \ref{msuplemma} therefore gives $\limsup_{r\rightarrow 0} r^{-s}\mu^\alpha (B_R(x,r))$ $=0$ and so \cite{Falconerbook}, Proposition 4.9 implies the result.
\end{proof}

For the second special case in which we prove a Hausdorff dimension lower bound, we assume the following:
\\
\smallskip
{\bf (C) }{\it The random variables $(w(i))_{i\in S}$ are independent and their distributions satisfy the following tail condition. If $p\in(0,1)$, there exists a constant $\varepsilon>0$ such that
\[P(w(i) \leq x \varepsilon|\:w(i)\leq x) \leq p\mbox{,\hspace{20pt}}\forall x \in (0,1]\mbox{, }i\in S.\]}

Again, this is an assumption which stops the fractal getting too small too quickly. Rather than bounding them uniformly below, as is the previous result, we assume independence of the scaling factors and restrict the amount of build up of mass close to zero in the distributions of the scaling factors. This independence allows us to use a percolation-type argument, which enables us to avoid having to impose a uniform lower bound. If $w(i)$ has distribution function $\Phi$, then the inequality of (C) is equivalent to $\Phi(\varepsilon x)\leq p\Phi(x)$, for every $x\in(0,1]$. From this, it is easy to see that if $\Phi$ is approximately polynomial (i.e. there exist constants $c_{1},c_{2}$ such that $c_{1} x^n \leq \Phi(x) \leq c_{2} x^n$), then (C) holds. An example of when the build up of mass is too great for this to hold is the distribution function $\Phi(x)=(1-\ln x)^{-1}$.
The following lemma gives another alternative characterisation of assumption (C) that will prove useful in obtaining negative moments for the resistance perturbations $(R_i)_{i\in\Sigma_*}$.

\begin{lemma} \label{assumceq} Let $X$ be a $(0,1]$ valued random variable with distribution function $\Phi$, then the following statements are equivalent:\\
(a) If $p\in (0,1)$, then there exists an $\varepsilon\in(0,1)$ such that $\Phi(x\varepsilon)\leq p\Phi(x)$ for $x\in(0,1]$.
\\
(b) There exist constants $\varepsilon\in(0,1)$ and $\beta>0$ such that
\[E\left(\left(1-\frac{x^\beta\varepsilon}{X^\beta}\right)\mathbf{1}_{\{X\leq x\}}\right)\geq0,\hspace{20pt}\forall x\in(0,1].\]
\end{lemma}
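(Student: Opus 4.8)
The plan is to recast condition (b) in a transparent form and then prove the two implications separately, with (a)$\Rightarrow$(b) being the substantial one. Expanding the expectation and writing $x^\beta X^{-\beta}=(x/X)^\beta$, one has
\[
E\left(\left(1-\frac{x^\beta\varepsilon}{X^\beta}\right)\mathbf{1}_{\{X\leq x\}}\right)=\Phi(x)-\varepsilon\,E\left(\left(\tfrac{x}{X}\right)^\beta\mathbf{1}_{\{X\leq x\}}\right),
\]
so (b) is exactly the assertion that there exist $\varepsilon\in(0,1)$ and $\beta>0$ with $\varepsilon\,E((x/X)^\beta\mathbf{1}_{\{X\leq x\}})\leq\Phi(x)$ for all $x\in(0,1]$. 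Everything thus reduces to comparing the truncated inverse moment $E((x/X)^\beta\mathbf{1}_{\{X\leq x\}})$ against $\Phi(x)=P(X\leq x)$. To keep the two constants apart I relabel the constant in (a) as $\eta$ (so (a) reads: for each $p$ there is $\eta\in(0,1)$ with $\Phi(\eta x)\leq p\Phi(x)$), reserving $\varepsilon$ for the multiplier in (b).

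For (b)$\Rightarrow$(a), which I expect to be short, I would bound the truncated moment from below by discarding mass: for any $\eta\in(0,1)$, restricting the indicator to $\{X\leq\eta x\}$ and using $(x/X)^\beta\geq\eta^{-\beta}$ on that event gives $E((x/X)^\beta\mathbf{1}_{\{X\leq x\}})\geq\eta^{-\beta}\Phi(\eta x)$. Feeding this into the rewritten form of (b) yields $\Phi(x)\geq\varepsilon\eta^{-\beta}\Phi(\eta x)$, i.e. $\Phi(\eta x)\leq\varepsilon^{-1}\eta^\beta\Phi(x)$, for every $\eta\in(0,1)$. Given $p\in(0,1)$, choosing $\eta=(p\varepsilon)^{1/\beta}\in(0,1)$ makes $\varepsilon^{-1}\eta^\beta=p$, which is precisely (a).

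For the harder direction (a)$\Rightarrow$(b) I would bound the truncated moment from above by a dyadic (geometric) decomposition of $\{X\leq x\}$ into the shells $\{\eta^{k+1}x<X\leq\eta^k x\}$, $k\geq0$, which partition $(0,x]$ since $X>0$. On the $k$-th shell $(x/X)^\beta\leq\eta^{-(k+1)\beta}$, while iterating (a) gives $P(\eta^{k+1}x<X\leq\eta^k x)\leq\Phi(\eta^k x)\leq p^k\Phi(x)$. Summing,
\[
E\left(\left(\tfrac{x}{X}\right)^\beta\mathbf{1}_{\{X\leq x\}}\right)\leq\eta^{-\beta}\Phi(x)\sum_{k=0}^\infty\left(p\eta^{-\beta}\right)^k=\frac{\eta^{-\beta}}{1-p\eta^{-\beta}}\,\Phi(x),
\]
provided the geometric ratio $p\eta^{-\beta}$ is strictly less than $1$; one then takes $\varepsilon$ to be the reciprocal of this constant, which exceeds $1$ so that $\varepsilon\in(0,1)$, and (b) follows.

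The single delicate point — and the main obstacle — is securing $p\eta^{-\beta}<1$, since (a) only supplies \emph{some} $\eta\in(0,1)$ for each $p$ and gives no control on how close $\eta$ is to the critical value $p^{1/\beta}$. The resolution is that $\beta$ is at our disposal in (b): I would first fix a convenient value, say $p=\tfrac12$, obtain the corresponding $\eta$ from (a), and only then choose $\beta$ small enough that $\eta^\beta>p$, i.e. $\beta<\ln p/\ln\eta$. With $\beta$ chosen after $\eta$ the series converges and the argument closes. Everything else is elementary; the only things to check are the monotonicity bounds for $\Phi$, the identity rewriting (b), and this ordering of the constant choices.
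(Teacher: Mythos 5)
Your proof is correct, and its core matches the paper's. The (b)$\Rightarrow$(a) direction is essentially identical: both arguments amount to bounding $\Phi(\eta x)$ above by the truncated moment and choosing $\eta=(p\varepsilon)^{1/\beta}$ so that $\varepsilon^{-1}\eta^{\beta}=p$. In the (a)$\Rightarrow$(b) direction the essential ingredients also coincide --- the geometric shells $(\eta^{k+1}x,\eta^{k}x]$, the iterated estimate $\Phi(\eta^{k}x)\leq p^{k}\Phi(x)$, and the crucial ordering of constants (fix $p$, obtain $\eta$ from (a), then take $\beta$ small enough that $p\eta^{-\beta}<1$, exactly as the paper requires $p<\varepsilon^{\beta}$) --- but your execution is a genuine, and slightly cleaner, variant: you decompose the truncated moment $E\left((x/X)^{\beta}\mathbf{1}_{\{X\leq x\}}\right)$ directly over the shells and sum the geometric series, whereas the paper first performs a Stieltjes integration by parts, checks that the boundary term $\left[x^{\beta}y^{-\beta}\Phi(y)\right]_{y=\delta}^{x}$ converges to $\Phi(x)$ as $\delta\rightarrow0$ (using the same shell bound), and only then estimates the remaining integral $\beta\int_{\delta}^{x}x^{\beta}y^{-\beta-1}\Phi(y)\,dy$ shell by shell. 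Your route avoids the integration by parts and the $\delta\rightarrow0$ boundary analysis entirely, at no loss of generality, and it also makes explicit the final constant in (b), namely $\varepsilon=\eta^{\beta}-p\in(0,1)$ (the reciprocal of your series constant), which the paper leaves implicit in the phrase ``bounded above by a constant multiple of $\Phi(x)$''. Both arguments ultimately rest on the same two facts: the scale-invariance of (a) under iteration, and the freedom to shrink $\beta$ after $\eta$ is fixed.
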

\begin{proof} Assume (a) holds and fix $p\in (0,1)$. Choose $\varepsilon\in (0,1)$ such that $\Phi(x\varepsilon)\leq p\Phi(x)$ holds, and let $\beta>0$ satisfy $p<\varepsilon^\beta$. Integration by parts yields
\[E\left(x^\beta X^{-\beta}\mathbf{1}_{\{X\leq x\}}\right)=\lim_{\delta\rightarrow 0} \left\{ \left[ x^\beta y^{-\beta} \Phi(y)\right]_{y=\delta}^x+\beta\int_\delta^x x^\beta y^{-\beta-1} \Phi(y)dy\right\}.\]
Now, $\Phi(\varepsilon^n)\leq p^n$, and so, for $y\in(\varepsilon^{n+1}, \varepsilon^n]$, we have $y^{-\beta}\Phi(y)\leq \varepsilon^{-\beta(n+1)}p^n$. It follows that, because $p\varepsilon^{-\beta}<1$, $\lim_{\delta\rightarrow 0} \left[ x^\beta y^{-\beta} \Phi(y)\right]_{y=\delta}^x=\Phi(x)$, which gives an alternative expression for the first term in the limit above. Furthermore,
\begin{eqnarray}
\lim_{\delta\rightarrow 0} \beta\int_\delta^x x^\beta y^{-\beta-1} \Phi(y)dy&=&\sum_{n=0}^\infty \beta x^\beta\int_{x\varepsilon^{n+1}}^{x\varepsilon^n} y^{-\beta-1} \Phi(y)dy\nonumber\\
&\leq& \beta x^\beta \sum_{n=0}^\infty \int_{x\varepsilon^{n+1}}^{x\varepsilon^n} (x\varepsilon^{n+1})^{-\beta-1} p^n \Phi(x)dy\nonumber\\
&\leq & \beta \Phi(x)\varepsilon^{-\beta-1}\sum_{n=0}^\infty (p\varepsilon^{-\beta})^n\nonumber\\
&=&\frac{\beta\Phi(x)}{\varepsilon^{\beta+1}(1-p\varepsilon^{-\beta})}.\nonumber
\end{eqnarray}
Hence $E(x^\beta X^{-\beta}\mathbf{1}_{\{X\leq x\}})$ is bounded above by a constant multiple of $\Phi(x)$, and therefore (b) holds.

Conversely, suppose that (b) holds for some $\varepsilon\in(0,1)$ and $\beta>0$. Fix $p\in(0,1)$ and define $\varepsilon':=(p\varepsilon)^{1/\beta}$. For $x\in (0,1]$, we have
\[\Phi(\varepsilon'x)\leq E\left(\mathbf{1}_{\{X\leq \varepsilon'x\}}\frac{{\varepsilon'}^\beta x^\beta}{X^\beta}\right)\leq p E\left(\mathbf{1}_{\{X\leq x\}}\frac{\varepsilon x^\beta}{X^\beta}\right)\leq p \Phi(x),\]
which is statement (a).
\end{proof}

\begin{lemma} \label{rnegmoments} Under assumptions (A) and (C), $E((R_\emptyset)^{-d})<\infty$, for every $d>0$.
\end{lemma}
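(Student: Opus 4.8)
The plan is to recognise $R_\emptyset$ as the critical tree-martingale limit of a \emph{sub-cascade} and then to inherit the estimate from the results already established for such objects. Recall that $R_\emptyset=\lim_{n\to\infty}\sum_{j\in\{1,2\}^n}l(j)$, where the products $l(j)$ involve only the scaling factors indexed by sequences drawn from $S':=\{1,2\}$. The family $(w(i))_{i\in\{1,2\}_*\backslash\{\emptyset\}}$ is itself a multiplicative cascade over the index set $S'$ of the type considered in Section \ref{multcasc} (the defining independence property is inherited from the full cascade by restriction), with associated function $\tilde F(\theta):=E(w(1)^\theta+w(2)^\theta)$ and tree-martingale $\tilde M^\theta(n):=\sum_{j\in\{1,2\}^n}l(j)^\theta\tilde F(\theta)^{-n}$. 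Assumption (A) reads exactly $\tilde F(1)=1$, so the critical exponent of this sub-cascade (the analogue of the $\alpha$ of Lemma \ref{mthetal2}) is $\alpha'=1$, and $R_\emptyset$ is precisely the $P$-a.s. limit of $\tilde M^1(n)$, i.e. the sub-cascade tree-martingale limit at its own critical exponent. Everything then reduces to checking that this sub-cascade satisfies the hypotheses of Proposition \ref{lefttailprop}.

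Next I would verify those hypotheses in turn. The condition $N'=|S'|=2>1$ is immediate, and $P(w(i)=0)=0$ holds by the standing assumption that the $w(i)$ are non-zero. The strict inequality $\sum_{i\in S'}P(w(i)=1)<1$ follows from (A): since $w(i)\in(0,1]$ we have $P(w(i)=1)\le E(w(i))$, and equality in $P(w(1)=1)+P(w(2)=1)=E(w(1))+E(w(2))=1$ would force $w(1)=w(2)=1$ $P$-a.s., contradicting (A). Independence of $w(1)$ and $w(2)$ is part of (C). The only substantive point is the negative-moment condition $\max_{i\in S'}E(w(i)^{-\beta})<\infty$ for some $\beta>0$, and this is exactly what (C) delivers through Lemma \ref{assumceq}. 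Applying that lemma to each $w(i)$, and using that (C) furnishes a single $\varepsilon$ (hence a single $\beta>0$ valid for $i=1,2$), part (b) holds for $X=w(i)$ at every $x\in(0,1]$; evaluating it at $x=1$ and using $w(i)\le1$ gives $E(1-\varepsilon\,w(i)^{-\beta})\ge0$, which both forces $E(w(i)^{-\beta})<\infty$ and yields the bound $E(w(i)^{-\beta})\le\varepsilon^{-1}$.

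With all hypotheses confirmed, Proposition \ref{lefttailprop} applied to the sub-cascade at its critical exponent $\alpha'=1$ yields an exponential left-tail bound $P(R_\emptyset\le x)\le e^{-c x^{-\gamma}}$ for some $c,\gamma\in(0,\infty)$ and all $x\ge0$, and Corollary \ref{rdnegmoments} then immediately gives $E(R_\emptyset^{-d})<\infty$ for every $d>0$, as required. I expect the main obstacle to be bookkeeping rather than analysis: one must check carefully that the restriction of the full cascade to $S'$ is a genuine multiplicative cascade of the kind to which the results of Section \ref{multcasc} apply, and that $R_\emptyset$ coincides $P$-a.s. with the $P$-a.s. martingale limit (both facts being guaranteed under (A), which also ensures $R_\emptyset\in(0,\infty)$ $P$-a.s.). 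Once this identification is in place, the negative-moment estimate is not proved afresh but simply inherited. Alternatively, if one prefers to avoid the sub-cascade language, the same conclusion follows by iterating the self-similar decomposition $R_\emptyset=\sum_{j\in\{1,2\}^n}l(j)R_j$ and mimicking the Barlow--Bass argument of Proposition \ref{lefttailprop} verbatim, simplified by $\tilde F(\theta)\equiv1$ at $\theta=1$.
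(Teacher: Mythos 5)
Your proposal is correct and follows essentially the same route as the paper: the paper's proof likewise reduces the lemma to Corollary \ref{rdnegmoments} applied to $R_\emptyset$ viewed as the critical martingale limit of the $\{1,2\}$-sub-cascade, with the only substantive check being $E(w(i)^{-\beta})<\infty$ for $i=1,2$, obtained from (C) via Lemma \ref{assumceq}. The additional verifications you carry out (that $\alpha'=1$ under (A), that $\sum_{i\in\{1,2\}}P(w(i)=1)<1$ follows from (A), and the evaluation of Lemma \ref{assumceq}(b) at $x=1$) are exactly the steps the paper leaves implicit, so your write-up is a fuller version of the same argument.
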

\begin{proof} The result will follow from Corollary \ref{rdnegmoments} if we can show that $E(w(i)^{-\beta})<\infty$, $i=1,2$, for some $\beta>0$. Under (C), this is a simple consequence of the previous lemma.
\end{proof}

We now use the alternative description of assumption (C) provided by Lemma \ref{assumceq} to show that the inequality of (C) still holds if the $w(i)$ are multiplied by the resistance perturbations $R_i$. We shall use the $\varepsilon_0$ obtained in the following lemma to describe what constitutes a small edge of $(V^\delta,E^\delta)$.

\begin{lemma} \label{assumcprop} If (A) and (C) hold and $p\in(0,1)$, then there exists an $\varepsilon_0>0$ such that
\[P(w(i)R_i \leq \varepsilon_0 x|\:w(i)\leq x) \leq p\mbox{,\hspace{20pt}}\forall x \in (0,1]\mbox{, }i\in S.\]
\end{lemma}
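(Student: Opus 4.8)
The plan is to reduce the stated conditional-probability bound to a single integral inequality and then play off two facts that scale in $x$ in exactly the same way: a polynomial left tail for $R_i$ coming from its finite negative moments, and the moment reformulation of assumption (C) furnished by Lemma \ref{assumceq}. First I would record that $w(i)$ and $R_i$ are independent. Indeed, the resistance perturbation $R_i=\lim_{n}\sum_{j\in\{1,2\}^n}l(ij)/l(i)$ depends only on the cascade weights $(w(ij))_{j\in\Sigma_*\backslash\{\emptyset\}}$ strictly below $i$, whereas $w(i)$ labels the edge immediately above $i$, so the defining independence property of the multiplicative cascade makes them independent. Writing $\Phi$ for the distribution function of $w(i)$ and $G$ for that of $R_i$, conditioning on the value of $w(i)$ gives
\[P\big(w(i)R_i\leq \varepsilon_0 x,\ w(i)\leq x\big)=E\left[\mathbf{1}_{\{w(i)\leq x\}}\,G\!\left(\frac{\varepsilon_0 x}{w(i)}\right)\right].\]
Since $P(w(i)\leq x)=\Phi(x)$, it therefore suffices to show the right-hand side is at most $p\,\Phi(x)$ for all $x\in(0,1]$ (the claimed inequality being vacuous where $\Phi(x)=0$).

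Next I would control $G$ near $0$. Because $R_i$ is identically distributed to $R_\emptyset$, Lemma \ref{rnegmoments} gives $E(R_i^{-\beta})=:C<\infty$ for every $\beta>0$, so Markov's inequality yields the polynomial left-tail estimate $G(t)=P(R_i\leq t)\leq C\,t^{\beta}$ for all $t>0$. I would choose $\beta$ to be precisely an exponent supplied by Lemma \ref{assumceq}(b) applied to $w(i)$ under (C): there are $\varepsilon_1\in(0,1)$ and $\beta>0$ with $E\big[(1-\varepsilon_1 x^{\beta}w(i)^{-\beta})\mathbf{1}_{\{w(i)\leq x\}}\big]\geq0$, equivalently
\[E\left[w(i)^{-\beta}\mathbf{1}_{\{w(i)\leq x\}}\right]\leq \frac{\Phi(x)}{\varepsilon_1 x^{\beta}},\qquad \forall x\in(0,1].\]
As the finiteness of the negative moments of $R_i$ holds for every positive exponent, this same $\beta$ can be used in both estimates simultaneously, which is exactly what makes the two bounds compatible.

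Combining the ingredients, on the event $\{w(i)\leq x\}$ one has $G(\varepsilon_0 x/w(i))\leq C\varepsilon_0^{\beta}x^{\beta}w(i)^{-\beta}$, whence
\[E\left[\mathbf{1}_{\{w(i)\leq x\}}\,G\!\left(\frac{\varepsilon_0 x}{w(i)}\right)\right]\leq C\varepsilon_0^{\beta}x^{\beta}\,E\left[w(i)^{-\beta}\mathbf{1}_{\{w(i)\leq x\}}\right]\leq \frac{C\varepsilon_0^{\beta}}{\varepsilon_1}\,\Phi(x).\]
The crucial point, and the step I expect to require the most care, is that the factor $x^{\beta}$ cancels, leaving the bound $C\varepsilon_0^{\beta}/\varepsilon_1$ which is \emph{uniform} in $x$; matching the polynomial left-tail exponent of $R_i$ to the moment exponent in (C) is what delivers this uniformity. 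It then suffices to take $\varepsilon_0\leq(p\varepsilon_1/C)^{1/\beta}$ to obtain $P(w(i)R_i\leq\varepsilon_0 x\mid w(i)\leq x)\leq p$ for all $x\in(0,1]$. Finally, since $S=\{1,2,3\}$ is finite, I would run this argument separately for each $i\in S$ (the constants $C,\varepsilon_1,\beta$ may depend on $i$) and take the smallest of the resulting thresholds to produce a single $\varepsilon_0$ that works for all $i\in S$.
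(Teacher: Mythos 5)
Your proof is correct and follows essentially the same route as the paper: both hinge on Lemma \ref{assumceq}(b) to control $E\bigl(x^\beta w(i)^{-\beta}\mathbf{1}_{\{w(i)\leq x\}}\bigr)$ by a constant multiple of $P(w(i)\leq x)$, on Lemma \ref{rnegmoments} for $E(R_i^{-\beta})<\infty$, and on the independence of $w(i)$ and $R_i$ to factor the expectation, with the $x^\beta$ cancellation giving a bound uniform in $x$ and $\varepsilon_0$ then chosen small. Your conditioning on $w(i)$ and bounding the left tail $G(t)\leq Ct^\beta$ of $R_i$ is just a repackaging of the paper's direct Markov-inequality step applied to $(\varepsilon_0 x/(w(i)R_i))^\beta$, so the two arguments are the same computation.
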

\begin{proof} By Lemma \ref{assumceq}, we can choose $\varepsilon, \beta>0$ such that
\[E\left(\left(1-\frac{\varepsilon x^\beta}{w(i)^\beta}\right)\mathbf{1}_{\{w(i)\leq x\}}\right)\geq0,\hspace{20pt}\forall x\in(0,1],\: i\in S.\]
Note also that, by Lemma \ref{rnegmoments}, $E((R_i)^{-\beta})<\infty$. Hence
\begin{eqnarray*}
P(w(i)R_i \leq\varepsilon_0x,\:w(i)\leq x)&\leq&E\left( \left( \frac{\varepsilon_0x}{w(i)R_i}\right)^\beta \mathbf{1}_{\{w(i)\leq x\}}\right)\\
&=&{\varepsilon_0}^\beta E(R_\emptyset^{-\beta})E\left( \frac{x^\beta}{w(i)^\beta} \mathbf{1}_{\{w(i)\leq x\}}\right)\\
&\leq& \frac{{\varepsilon_0}^\beta}{{\varepsilon}} E(R_\emptyset^{-\beta})P\left( w(i)\leq x \right)
\end{eqnarray*}
and so the result holds for $\varepsilon_0$ chosen suitably small.
\end{proof}

Henceforth we consider $p$ to be a deterministic constant and choose $\varepsilon_0$ so that the claim of the previous lemma holds. For reasons that will become clear in the proof of Lemma \ref{localclustertail}, it will be convenient to assume that $p\in(0,\frac{3^3}{4^4})$.

We now look to bound $N_{\delta, \varepsilon_0}(x)$, and to do so it will be convenient to use the language of percolation theory. We first define the events $(A_i^\delta)_{i\in\Sigma_\delta}$ by $A_i^\delta:=\{l(i)R_i\leq \varepsilon_0\delta\}$. For $i\in\Sigma_\delta$, we call the set $T_i$ {\it open} if $A_i^\delta$ occurs, and {\it closed} otherwise. Thus $T_i$ being open corresponds to $\psi_i(V^0)$ being a small edge in the graph $(V^\delta,E^\delta)$ (when an edge of this form is weighted by $l(i)R_i$). We will show that the largest cluster of sets from $(T_i)_{i\in\Sigma_\delta}$ which are open is not too large, and explain how this fact gives us a useful estimate for $N_{\delta, \varepsilon_0}(x)$.

Consider the random variable
\[H_\delta:= \left( \Sigma_\delta;\: (l(i|(|i|-1)))_{i\in\Sigma_\delta}\right).\]
We shall be conditioning on $H_\delta$, the informal motivation for doing so is the following. In the proof of Lemma \ref{tdeltalemma} we introduced a branching process where the individual $i$ is born at time $-\ln l(i)$. Hence if we stop the branching process at time $-\ln \delta$ (and can not see into the future) then we will be able to ascertain the value of $H_\delta$. However, we will not be able to observe the exact values of $l(i)$ or $R_i$ for $i\in\Sigma_\delta$. So, in this sense, we can consider $H_\delta$ to be the information about the weighted graph $(V^\delta,E^\delta)$ available at time $-\ln \delta$.

We now make precise the nature of the percolation-type behaviour that the independence of the $w(i)$s under the assumption (C) induces on the open/closed sets of $(T_i)_{i\in\Sigma_\delta}$. Note that the result provides an upper bound on the probability of a set from $(T_i)_{i\in\Sigma_\delta}$ being open which is independent of $\delta$. This scale-invariance property will be of particular importance for the arguments that follow.

\begin{lemma}\label{perclem} Suppose (A), (B) and (C) hold. Let $\delta\in(0,1)$. Conditionally on $H_\delta$, the sets $(T_i)_{i\in\Sigma_\delta}$ are open/closed independently. Furthermore, for $i\in\Sigma_\delta$ we have that ${P}\left(A^\delta_i\:|\:H_\delta\right)\leq p$, $P$-a.s., and, for $s\geq 1$, $
{E}\left(s^{\mathbf{1}_{A^\delta_i}}\:|\:H_\delta\right)\leq 1-p +sp$, $P$-a.s.
\end{lemma}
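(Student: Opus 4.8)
The plan is to decompose the cascade weights according to their position relative to the cut-set $\Sigma_\delta$, and then to identify the conditional law of the pairs $(w(i),R_i)_{i\in\Sigma_\delta}$ given $H_\delta$ explicitly. The starting observation is that under assumption (C) the weights $(w(i))_{i\in\Sigma_*\backslash\{\emptyset\}}$ are \emph{mutually} independent, not merely independent across distinct triples. Fixing a cut-set $\Lambda$ and working on the event $\{\Sigma_\delta=\Lambda\}$, I would split the weights into three families: the \emph{upper} weights lying strictly above $\Lambda$ (the strict ancestors of the elements of $\Lambda$), the \emph{boundary} weights $(w(i))_{i\in\Lambda}$, and the \emph{lower} weights lying strictly below $\Lambda$. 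The points to record are that the parent products $(l(i|(|i|-1)))_{i\in\Lambda}$, hence the quantities $x_i:=\delta/l(i|(|i|-1))\in(0,1)$, are functions of the upper weights alone and are $H_\delta$-measurable; that the resistance perturbation $R_i=w(i1)R_{i1}+w(i2)R_{i2}$ depends only on the lower weights, so that $R_i\sim R_\emptyset$ and $R_i\bot w(i)$; and that $w(i)$ is itself a boundary weight.

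First I would show that the event $\{\Sigma_\delta=\Lambda\}$ factorises as the intersection of an upper-weight event with the boundary constraint $\bigcap_{i\in\Lambda}\{w(i)\leq x_i\}$. Indeed, membership of $i$ in the cut-set requires $l(i|(|i|-1))>\delta$ (a condition on the ancestors, equivalently on the upper weights) together with $l(i)=l(i|(|i|-1))w(i)\leq\delta$ (equivalently $w(i)\leq x_i$); since $\Lambda$ is a cut-set and $l$ is non-increasing along paths, these conditions pin down $\Sigma_\delta=\Lambda$ exactly and no further constraints are needed. Crucially, neither condition involves the lower weights, so conditioning on $\{\Sigma_\delta=\Lambda\}$ and on the observed parent products places no restriction whatsoever on the $R_i$.

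With this decomposition, the conditional law follows from the mutual independence of the three families. Conditionally on $H_\delta$, the boundary weights $(w(i))_{i\in\Lambda}$ are independent, each $w(i)$ distributed as its unconditioned law restricted to $\{w(i)\leq x_i\}$, while the perturbations $(R_i)_{i\in\Lambda}$ remain independent (their subtrees are disjoint), each distributed as $R_\emptyset$ and independent both of the boundary weights and of $H_\delta$. This at once gives the conditional independence of the events $A_i^\delta$. Rewriting $A_i^\delta=\{l(i)R_i\leq\varepsilon_0\delta\}=\{w(i)R_i\leq\varepsilon_0 x_i\}$ and applying Lemma \ref{assumcprop} with $x=x_i\in(0,1]$ yields
\[
P\left(A_i^\delta\mid H_\delta\right)=P\left(w(i)R_i\leq\varepsilon_0 x_i\mid w(i)\leq x_i\right)\leq p,\qquad P\text{-a.s.},
\]
which is the second assertion. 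The last assertion is then immediate: since $s^{\mathbf{1}_{A_i^\delta}}$ equals $s$ on $A_i^\delta$ and $1$ otherwise,
\[
E\left(s^{\mathbf{1}_{A_i^\delta}}\mid H_\delta\right)=1+(s-1)P\left(A_i^\delta\mid H_\delta\right)\leq 1+(s-1)p=1-p+sp
\]
for $s\geq1$, using $s-1\geq0$ and the bound just obtained.

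I expect the main obstacle to be making the conditional-law computation rigorous, in particular justifying that conditioning on $H_\delta$ imposes exactly the constraint $\{w(i)\leq x_i\}$ on each boundary weight and nothing on the $R_i$. This is precisely where the independence within triples granted by (C) is essential: siblings can simultaneously belong to $\Sigma_\delta$, so without the independence of $(w(i))_{i\in S}$ the joint conditional law of the boundary weights would fail to factorise and the sets $(T_i)_{i\in\Sigma_\delta}$ would not be conditionally independent. Some care is also needed because $\Sigma_\delta$ is itself random and defined through all the weights; the cleanest route is to condition on each value $\{\Sigma_\delta=\Lambda\}$ separately (the cut-sets being countable) and to lean on the branching-process description from the proof of Lemma \ref{tdeltalemma}, under which $H_\delta$ is exactly the information generated by stopping the process at time $-\ln\delta$.
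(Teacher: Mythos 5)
Your proposal is correct and follows essentially the same route as the paper: the paper's proof also reduces, via the independence of the cascade weights, to the factorisation $P(A^\delta_{i^1},\dots,A^\delta_{i^n}\,|\,H_\delta)=\prod_m P\bigl(w(i^m)R_{i^m}\leq\varepsilon_0\delta/x\,|\,w(i^m)\leq\delta/x\bigr)_{x=l(i^m|(|i^m|-1))}$ and then applies Lemma \ref{assumcprop} together with the observation that $l(i|(|i|-1))>\delta$ for $i\in\Sigma_\delta$. The only difference is one of detail: the paper dismisses the factorisation as ``elementary arguments,'' whereas you spell out the upper/boundary/lower decomposition of the weights that justifies it, which is precisely the argument intended.
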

\begin{proof} Suppose that $i^1,\dots,i^n$ are distinct elements of $\Sigma_\delta$. Applying the independence of the $(w(i))_{i\in\Sigma_*\backslash\{\emptyset\}}$, elementary arguments yield
\[{{P}\left(A^\delta_{i^1},\dots,A^\delta_{i^n}\:|\:H_\delta\right)}=\prod_{m=1}^n{P}\left({w(i^m)R_{i^m}}\leq\frac{\varepsilon_0\delta}{x}\vline\: w(i^m)\leq \frac{\delta}{x}\right)_{x=l(i^m|(|i^m|-1))}.\]
This implies the independence claim. Consider the case $n=1$, and write $i=i^1$. Since $i\in\Sigma_\delta$, we must have $l(i|(|i|-1))>\delta$. Hence we can apply the bound of Lemma \ref{assumcprop} to the above expression to obtain that ${P}(A_i^\delta\:|\:H_\delta)\leq p$, ${P}$-a.s. The generating function bound is a simple consequence of this.
\end{proof}

We now introduce an algorithm to find the largest cluster of open sets of the form $(T_i)_{i\in\Sigma_\delta}$. We shall work on the graph $(\Sigma_\delta, \Gamma_\delta)$, where the edge set $\Gamma_\delta$ is defined by
\[\Gamma_\delta:=\{\{i,j\}:\:i,j\in\Sigma_\delta,\:T_i\cap T_j\neq \emptyset,\: T_i,T_j\mbox{ open}\}.\]
We shall write $\mathcal{C}(i)$ for the component of $(\Sigma_\delta,\Gamma_\delta)$ which contains the vertex $i$. Clearly, if $T_i$ is closed, then $\mathcal{C}(i)=\{i\}$. The following argument to find the size of the largest cluster is inspired by similar procedures used in \cite{Karp} to find the size of the largest cluster of a random digraph, and in \cite{AloSpe} to find the size of the largest cluster of a complete graph with edge percolation.

Assume that $H_\delta$ is known. Let $i\in\Sigma_\delta$ and set $L_0:=\{i\}$, $D_0:=\emptyset$. For $n\geq 1$, we define $L_n$, $D_n$ inductively. Assume we are given $L_n$, $D_n$. If $L_n\neq \emptyset$, then pick a vertex $j\in L_n$ (we can assume that there is a deterministic rule for doing this), and set
$$L_{n+1}:=L_n\cup\{k\in\Sigma_\delta:\:k\not\in L_n\cup D_n,\:\{j,k\}\in\Gamma_\delta\}\backslash\{j\},\hspace{10pt}D_{n+1}:=D_n\cup\{j\}.$$
If $L_n=\emptyset$, then set $L_{n+1}:=\emptyset$, $D_{n+1}:=D_n$.

It is a little unclear from this description as to exactly what the algorithm is doing and so we now try to provide a more intuitive description in terms of a branching process related to $\Sigma_\delta$. Call $i$ a {\it live}\index{live} vertex. For the first step, connect to $i$ all those vertices in $\Sigma_\delta$ that are joined to $i$ by an edge in $\Gamma_\delta$. Call these vertices live and $i$ {\it dead}\index{dead}. At an arbitrary stage, pick a live vertex, $j$, and connect to it all those vertices which we have not yet considered and are connected to $j$ by an edge in $\Gamma_\delta$. Call the new vertices in our branching process live and $j$ dead. Continue until we have no live vertices to pick from. At the point of termination, the collection of dead vertices contains exactly the vertices of $\mathcal{C}(i)$.

In our notation, $L_n$ represents the live vertices and $D_n$ the dead ones. Since we can pick each vertex in $\Sigma_\delta$ only once in the algorithm, we must have $D_{|\Sigma_\delta|+1}=\mathcal{C}(i)$. However, the algorithm may effectively terminate before this stage, giving that $|D_n|= n\wedge \tau$, where $\tau:=\inf\{n:\:L_n=\emptyset\}$. Necessarily $L_{|\Sigma_\delta|+1}=\emptyset$, and so this infimum is well-defined and finite. In particular, we must have $|\mathcal{C}(i)|=\tau$.

Using this algorithm, we are able to obtain a tail estimate for the distribution of $|\mathcal{C}(i)|$, conditional on $H_\delta$. Note that this result is scale-invariant; the tail bound on the size of a cluster does not depend on $\delta$.

\begin{lemma} \label{localclustertail} Suppose (A), (B) and (C) hold and let $\delta\in(0,1)$. There exists a deterministic constant $c$, not depending on $\delta$, such that, for $i\in\Sigma_\delta$,
\[{P}(|\mathcal{C}(i)|>n\:|\:H_\delta)\leq e^{-c n},\hspace{20pt}{P}\mbox{-a.s.}\]
\end{lemma}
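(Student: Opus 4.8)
The plan is to reduce the conditional tail of $|\mathcal{C}(i)|$ to that of the total progeny of a subcritical branching process, by analysing the exploration algorithm introduced above. Recall that $|\mathcal{C}(i)|=\tau$, where $\tau=\inf\{n:L_n=\emptyset\}$, and that at each step we process one live vertex $j$ and add to the live set exactly those $k\in\Sigma_\delta$, not previously seen, with $T_k$ open and $T_j\cap T_k\neq\emptyset$. Writing $Y_m$ for the number of such new live vertices produced at the $m$-th step and $Z_n:=|L_n|$, we have $Z_0=1$ and $Z_n=1-n+\sum_{m=1}^n Y_m$ while the process is alive. Since $\{\tau>n\}$ forces $Z_n\geq1$, this gives the inclusion $\{\tau>n\}\subseteq\{\sum_{m=1}^n Y_m\geq n\}$, so it suffices to control the right-hand sum conditionally on $H_\delta$.

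Two ingredients feed into this. First, a geometric observation: each $T_i$, $i\in\Sigma_\delta$, can meet at most four other sets $T_j$, $j\in\Sigma_\delta$. Indeed, by \cite{Kigami}, Proposition 1.3.5, distinct sets meet only at the boundary points $\psi_i(V^0)$; each $T_i$ has exactly two such points; and since the branch points of $T$ arise from joining three pieces at $(\tfrac12,0)$ at each subdivision, an inductive check shows every vertex of the tree $(V^\delta,E^\delta)$ has degree at most three, whence at most two further pieces meet $T_i$ at each of its two boundary points. Thus any $j$ has at most four candidate neighbours. Second, Lemma \ref{perclem} guarantees that, conditionally on $H_\delta$, the sets are open independently, each with probability at most $p$, and that $E(s^{\mathbf{1}_{A_i^\delta}}\mid H_\delta)\leq 1-p+sp$ for $s\geq1$.

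Combining these, at step $m$ the quantity $Y_m$ is a sum of at most four indicators $\mathbf{1}_{A_k^\delta}$ over fresh vertices. Because the algorithm reveals the open/closed status of each vertex only once, the statuses entering distinct steps are conditionally independent given $H_\delta$, so $E(s^{\sum_{m=1}^n Y_m}\mid H_\delta)\leq(1-p+sp)^{4n}$ for every $s\geq1$ (the inequality $1-p+sp\geq1$ absorbs any step with fewer than four fresh neighbours). A conditional Markov bound then yields, for all $s\geq1$,
\[
P(\tau>n\mid H_\delta)\leq P\Big(\sum_{m=1}^n Y_m\geq n\ \Big|\ H_\delta\Big)\leq\left(\frac{(1-p+sp)^4}{s}\right)^n.
\]
It remains to exhibit $s>1$ making the bracket strictly less than $1$. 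Minimising $g(s):=(1-p+sp)^4/s$ gives the optimiser $s^*=(1-p)/(3p)$, which exceeds $1$ precisely when $p<\tfrac14$; evaluating there yields $g(s^*)=4^4\,p(1-p)^3/3^3$. Since $(1-p)^3<1$, the standing assumption $p<3^3/4^4$ forces $g(s^*)<1$, and setting $c:=-\ln g(s^*)>0$ — a constant depending only on $p$, and hence not on $\delta$ — completes the argument.

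The main obstacle I expect is pinning down the uniform degree bound (four potential neighbours) from the self-similar geometry of $T$, together with verifying that the exploration reveals only fresh, and therefore conditionally independent, open/closed statuses; once these are secured the estimate is a routine subcritical branching-process computation, and the numerical threshold $p<3^3/4^4$ is exactly what the generating-function minimisation with maximal degree four demands (note that $(1-p)^3<1$ means this threshold is in fact sufficient but not sharp).
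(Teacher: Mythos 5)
Your proposal is correct and follows essentially the same route as the paper: the same exploration algorithm, the same degree-four bound from the geometry of $T$, the same conditional generating-function estimate via Lemma \ref{perclem} (made rigorous by iterated conditioning on the history of the exploration, exactly as the paper does with the recursion $|L_n|=|L_{n-1}|+Z_{n-1}-1$), and the same Chernoff optimisation at $s=(1-p)/(3p)$ yielding the constant $4^4 3^{-3} p(1-p)^3<1$ under $p<3^3/4^4$. Your repackaging via the cumulative offspring sum $\sum_{m=1}^n Y_m\geq n$ on $\{\tau>n\}$ is an equivalent formulation of the paper's bound $P(|L_n|>0\,|\,H_\delta)\leq E(s^{|L_n|}\mathbf{1}_{\{|L_n|>0\}}\,|\,H_\delta)$, not a genuinely different argument.
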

\begin{proof} Choose $i\in\Sigma_\delta$ and use the algorithm described prior to this lemma to construct $(L_n, D_n)_{n\geq 0}$. Given $L_n, D_n$, the number of new live vertices in the $(n+1)$st step of the algorithm is
\[Z_n:=\#\{k\in\Sigma_\delta:\:k\not\in L_n\cup D_n,\:\{j,k\}\in\Gamma_\delta\},\]
if $L_n\neq \emptyset$, where $j=j(L_n)$ is the vertex chosen from $L_n$ in the algorithm, and 0 otherwise. On $\{L_n=\emptyset\}$, for $s\geq 1$, ${E}\left(s^{Z_n}\:\vline\:H_\delta, L_n, D_n\right)=1$, ${P}$-a.s. On $\{L_n\neq\emptyset\}$ with $j=j(L_n)$, using the independence and generating function bound of Lemma \ref{perclem}, for $s\geq 1$, we $P$-a.s. have
\begin{equation}
{E}\left(s^{Z_n}\:\vline\:H_\delta, L_n, D_n\right)\leq\prod_{\substack{k\in\Sigma_\delta:\:k\not\in L_n\cup D_n,\\ \:T_j\cap T_k\neq\emptyset}}{E}\left(s^{\mathbf{1}_{\{T_k\:\rm open}\}}\:\vline\:H_\delta\right)\leq(1-p+sp)^{4},\label{fridge}
\end{equation}
where for the second inequality we have applied the facts that at most three of the sets $(T_i)_{i\in\Sigma_\delta}$ intersect at any point, and also that $T_j$ only intersects with other elements of $(T_i)_{i\in\Sigma_\delta}$ at points in $\psi_j(V^0)$. Hence, because this upper bound is larger than 1, we have that ${E}\left(s^{Z_n}\:\vline\:H_\delta, L_n, D_n\right)\leq(1-p+sp)^{4}$, ${P}$-a.s.

For $n\leq\tau$ we have $|L_n| = |L_{n-1}|+Z_{n-1}-1$, and so, for $s\geq1$,
\begin{eqnarray*}
{E}\left(s^{|L_n|}\mathbf{1}_{\{|L_n|>0\}}\:\vline\:H_\delta\right)&\leq&{E}\left(s^{|L_n|}\mathbf{1}_{\{|L_{n-1}|>0\}}\:\vline\:H_\delta\right)\\
&=&{E}\left(s^{|L_{n-1}|}\mathbf{1}_{\{|L_{n-1}|>0\}}{E}\left( s^{Z_{n-1}-1}\:\vline\:H_\delta,L_{n-1}, D_{n-1} \right)\vline\:H_\delta\right)\\
&\leq& s^{-1}(1-p+sp)^{4}{E}\left(s^{|L_{n-1}|}\mathbf{1}_{\{|L_{n-1}|>0\}}\:\vline\:H_\delta\right),
\end{eqnarray*}
where we use the inequality at (\ref{fridge}) for the final bound and we have also used the fact that $\{|L_n|>0\}=\{\tau>n\}$. Applying this repeatedly yields a $P$-a.s. upper bound of $s^{-n}(1-p+sp)^{4n}$ for the expectation considered. Consequently, ${P}$-a.s., for $s\geq1$,
$${P}(|\mathcal{C}(i)|>n\:|\:H_\delta)={P}(|L_n|>0\:|\:H_\delta)\leq{E}\left(s^{|L_n|}\mathbf{1}_{\{|L_n|>0\}}\:\vline\:H_\delta\right)
\leq s^{-n}(1-p+ps)^{4n}.$$
This is minimised by $s=(1-p)/3p$, which is greater than 1, because of the upper bound we have assumed on $p$. Substituting for this value of $s$ we obtain an upper bound of $(4^43^{-3}(1-p)^3p)^n$ for ${P}(|\mathcal{C}(i)|>n\:|\:H_\delta)$, and the result follows.
\end{proof}

This lemma is easily extended to give a tail estimate for the distribution of the size of the {largest component}, $\mathcal{C}_{\delta}:=\sup_{i\in\Sigma_\delta}\mathcal{C}(i)$, from which we can prove the following almost-sure convergence result.

\begin{lemma}\label{clustertail} If (A), (B) and (C) hold, then $\limsup_{n\rightarrow\infty}n^{-1}\mathcal{C}_{\delta_n}<\infty$, $P$-a.s.
\end{lemma}
\begin{proof} Applying the conditional tail distribution of Lemma \ref{localclustertail}, we have
\[{P}(\mathcal{C}_\delta>n)={E}\left({P}(\mathcal{C}_\delta>n|H_\delta)\right)\leq {E}\left(\sum_{i\in \Sigma_\delta} {P}(|\mathcal{C}(i)|>n|H_\delta)\right)\leq {E}(|\Sigma_\delta|)e^{-c_{1}n},\]
and so it is possible to deduce from Lemma \ref{tdeltalemma} a tail bound of the form ${P}(\mathcal{C}_\delta>n)\leq c_{2}e^{-c_{1}n}\delta^{-\alpha}$ for $\mathcal{C}_\delta$. Applying this, a simple Borel-Cantelli argument yields the lemma.
\end{proof}

We are now able to prove the lower bound for the Hausdorff dimension of $T$ in the second special case.

\begin{theorem} \label{hausdorfflower2} Assume (A), (B) and (C), then ${\rm dim}_H(T)=\alpha$, ${P}${-a.s.}
\end{theorem}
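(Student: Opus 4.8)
The plan is to establish the lower bound $\dim_H(T)\geq\alpha$, the matching upper bound being already in hand, and for this I would invoke the density criterion of \cite{Falconerbook}, Proposition 4.9, applied to the measure $\mu^\alpha$. Precisely, it suffices to show that for every $s<\alpha$ one has $\limsup_{r\to0}r^{-s}\mu^\alpha(B_R(x,r))=0$ for all $x\in T$, $P$-a.s.; since $\mu^\alpha$ is a non-atomic Borel probability measure on $(T,R)$, this forces $\dim_H(T)\geq s$, and letting $s\uparrow\alpha$ gives the result. The natural starting point is the bound (\ref{muupper}) with $\varepsilon=\varepsilon_0$, so the two quantities I must control as $\delta\to0$ are $\sup_{i\in\Sigma_{\delta/\varepsilon_0}}M_i^\alpha$ and $N_{\delta/\varepsilon_0,\varepsilon_0}(x)$.

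The crux of the argument is a deterministic geometric estimate showing that $N_{\delta,\varepsilon_0}(x)\leq c\,\mathcal{C}_\delta$ for a constant $c$ independent of $\delta$ and $x$, where $\mathcal{C}_\delta$ is the largest open cluster. The idea is that if $T_i\subseteq\tilde{T}_{\delta,\varepsilon_0}(x)$, that is $R(T_i,T_\delta(x))<\delta\varepsilon_0$, then, because $R$ is additive along the unique path in the tree $(V^\delta,E^\delta)$ joining $T_i$ to $T_\delta(x)$ and each edge $\psi_k(V^0)$ carries weight $l(k)R_k$, every set $T_k$ lying strictly between $T_i$ and $T_\delta(x)$ on this path must satisfy $l(k)R_k<\delta\varepsilon_0$ and hence be open; a single closed set on the path would already contribute more than $\delta\varepsilon_0$ to the distance. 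Thus each such $T_i$ is joined to one of the at most three sets comprising $T_\delta(x)$ through a chain of open sets, and so lies in one of the boundedly many open clusters meeting $T_\delta(x)$, or is a closed set directly attached to such a cluster. Using that at most three of the $(T_i)_{i\in\Sigma_\delta}$ meet at any point, so that every set has a bounded number of neighbours, the number of open sets in these clusters is at most a constant multiple of $\mathcal{C}_\delta$, and the number of closed boundary sets is controlled by the same quantity, which yields the asserted bound.

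With this estimate in place the probabilistic input comes from Lemmas \ref{msuplemma} and \ref{clustertail}. Working along $\delta_n=e^{-n}$, and noting that the fixed factor $\varepsilon_0^{-1}$ merely shifts this geometric sequence so that the same Borel--Cantelli arguments apply to $(\delta_n/\varepsilon_0)$, Lemma \ref{clustertail} gives $\mathcal{C}_{\delta_n/\varepsilon_0}\leq c_1 n$ for all large $n$, $P$-a.s., whence $N_{\delta_n/\varepsilon_0,\varepsilon_0}(x)\leq c_2 n$ uniformly in $x$; while Lemma \ref{msuplemma} gives, for any $\beta>0$, that $\sup_{i\in\Sigma_{\delta_n/\varepsilon_0}}M_i^\alpha=o(\delta_n^{-\beta})$, $P$-a.s. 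Fixing $s<\alpha$ and choosing $\beta\in(0,\alpha-s)$, I would substitute these into (\ref{muupper}): for $r\in[\delta_{n+1},\delta_n)$,
\[\frac{\mu^\alpha(B_R(x,r))}{r^s}\leq \frac{\mu^\alpha(B_R(x,\delta_n))}{\delta_{n+1}^s}\leq c_3(M^\alpha)^{-1}\,n\,\delta_n^{\alpha-s-\beta},\]
which tends to $0$ as $n\to\infty$ since $\delta_n^{\alpha-s-\beta}=e^{-n(\alpha-s-\beta)}$ decays exponentially while $M^\alpha\in(0,\infty)$, $P$-a.s., by Lemma \ref{mthetal2}. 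This delivers the required density bound for all $x\in T$, completing the proof once combined with the upper bound. The principal obstacle I anticipate is the geometric step of the second paragraph: making rigorous that $R$-proximity to $T_\delta(x)$ forces connection through open clusters, and accounting correctly both for the closed sets attached to the boundary of those clusters and for the constant arising from the rescaling $\delta\mapsto\delta/\varepsilon_0$ when passing between the continuous radius $r$ and the geometric sequence $(\delta_n)$.
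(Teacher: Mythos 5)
Your proposal is correct and follows essentially the same route as the paper: the density criterion of \cite{Falconerbook}, Proposition 4.9, applied to $\mu^\alpha$ via the bound (\ref{muupper}), with the key geometric step being exactly the paper's observation that additivity of $R$ along paths in $(V^\delta,E^\delta)$ forces any $T_k\subseteq\tilde{T}_{\delta,\varepsilon_0}(x)$ to be attached to $T_\delta(x)$ through a chain of open sets, giving $N_{\delta,\varepsilon_0}(x)\leq c\,\mathcal{C}_\delta$ (the paper's explicit constant is $16$), and then Lemmas \ref{msuplemma} and \ref{clustertail} along $\delta_n=e^{-n}$. The only differences are bookkeeping: the paper absorbs the factor $\varepsilon_0$ by taking $r\in[\varepsilon_0\delta_{n+1},\varepsilon_0\delta_n)$ rather than rescaling the cut-sets as you do, which is immaterial.
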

\begin{proof} As at (\ref{rdgdistance}), the distance between sets of the form $(T_i)_{i\in\Sigma_\delta}$ is the weighted graph distance between the corresponding vertices in $({V}^\delta,{E}^\delta)$. Hence if it happens that $R(T_i, T_j)<\delta\varepsilon_0$, then the shortest path between a vertex of ${V}^0_i$ and a vertex of ${V}^{0}_j$ contains only edges contained in open sets from $(T_k)_{k\in\Sigma_\delta}$. Thus, if $T_k\subseteq\tilde{T}_{\delta,\varepsilon_0}(x)$ for $x\in T$, then there exists $i\in\Sigma_\delta$, $j\in\mathcal{C}(i)$ such that $T_k\cap T_j\neq \emptyset$ and $T_i\cap T_\delta(x)\neq\emptyset$. The number of intersections of sets of the form $(T_i)_{i\in\Sigma_\delta}$ was estimated in the proof of Lemma \ref{localclustertail}, and from this we can deduce that $N_{\delta, \varepsilon_0}(x)\leq 16\mathcal{C}_\delta$ uniformly in $x$. Consequently, for $r\in[\varepsilon_0 \delta_{n+1},\varepsilon_0 \delta_n)$, the bound at (\ref{muupper}) implies
\[\mu^\alpha (B_R(x,r)) \leq c(M^\alpha)^{-1}r^{\alpha} \mathcal{C}_{\delta_n}\sup_{i \in \Sigma_{\delta_n}} M^\alpha_i.\]
On recalling the conclusions of Lemmas \ref{tdeltalemma} and \ref{clustertail}, we are able to deduce from this bound that, for $s<\alpha$, $\limsup_{r\rightarrow0} r^{-s}\mu^\alpha (B_R(x,r))=0$, for every $x\in T$, ${P}$-a.s. The result is subsequently obtained by applying \cite{Falconerbook}, Proposition 4.9.
\end{proof}

\def\cprime{$'$}

\end{document}